\numberwithin{equation}{section}
\newtheorem{Theorem}{Theorem}[section]
\newtheorem{Corollary}[Theorem]{Corollary}
\newtheorem{Lemma}[Theorem]{Lemma}
\theoremstyle{definition}
\newtheorem{Definition}[Theorem]{Definition}
\newtheorem{Example}[Theorem]{Example}
\newtheorem{Remark}[Theorem]{Remark}
\DeclareSymbolFont{AMSb}{U}{msb}{m}{n}
\DeclareMathSymbol{\N}{\mathalpha}{AMSb}{"4E}
\DeclareMathSymbol{\R}{\mathalpha}{AMSb}{"52}
\DeclareMathSymbol{\Z}{\mathalpha}{AMSb}{"5A}
\DeclareMathSymbol{\D}{\mathalpha}{AMSb}{"44}
\DeclareMathSymbol{\s}{\mathalpha}{AMSb}{"53}
\DeclareMathOperator{\OptGeo}{OptGeo}
\newcommand{\Y}{{Y}}
\newcommand{\kkapp}{\chi}
\newcommand{\RCRD}{\mathrm{RCD}}
\newcommand{\CDD}{\mathrm{CD}}
\newcommand{\MCP}{\mathrm{MCP}}
\DeclareMathOperator{\inrad}{InRad}
\DeclareMathOperator{\sgn}{sign}
\DeclareMathOperator{\vol}{vol}
\DeclareMathOperator{\Ch}{Ch}
\DeclareMathOperator{\lip}{Lip}
\DeclareMathOperator{\supp}{{spt}}
\DeclareMathOperator{\m}{m}
\DeclareMathOperator{\dm}{dm}
\DeclareMathOperator{\ric}{ric}
\DeclareMathOperator{\diam}{diam}
\newcommand{\T}{\mathcal{T}}
\DeclareMathOperator{\sign}{sign}
\DeclareMathOperator{\id}{id}
\begin{document}
\allowdisplaybreaks

\newcommand{\arXivNumber}{2005.07435}

\renewcommand{\thefootnote}{}

\renewcommand{\PaperNumber}{131}

\FirstPageHeading

\ShortArticleName{Inscribed Radius Bounds for Lower Ricci Bounded Metric Measure Spaces}

\ArticleName{Inscribed Radius Bounds for Lower Ricci Bounded\\ Metric Measure Spaces with Mean Convex Boundary\footnote{This paper is a~contribution to the Special Issue on Scalar and Ricci Curvature in honor of Misha Gromov on his 75th Birthday. The full collection is available at \href{https://www.emis.de/journals/SIGMA/Gromov.html}{https://www.emis.de/journals/SIGMA/Gromov.html}}}

\Author{Annegret BURTSCHER~$^\dag$, Christian KETTERER~$^\ddag$, Robert J.~MCCANN~$^\ddag$\newline and Eric WOOLGAR~$^\S$}

\AuthorNameForHeading{A.~Burtscher, C.~Ketterer, R.J.~McCann and E.~Woolgar}

\Address{$^\dag$~Department of Mathematics, IMAPP, Radboud University, \\
\hphantom{$^\dag$}~PO Box 9010, Postvak 59, 6500 GL Nijmegen, The Netherlands}
\EmailD{\href{mailto:burtscher@math.ru.nl}{burtscher@math.ru.nl}}

\Address{$^\ddag$~Department of Mathematics, University of Toronto,\\
\hphantom{$^\ddag$}~40 St George St, Toronto Ontario, Canada M5S 2E4}
\EmailD{\href{mailto:ckettere@math.toronot.edu}{ckettere@math.toronto.edu}, \href{mailto:mccann@math.toronto.edu}{mccann@math.toronto.edu}}

\Address{$^\S$~Department of Mathematical and Statistical Sciences and Theoretical Physics Institute,\\
\hphantom{$^\S$}~University of Alberta, Edmonton AB, Canada T6G 2G1}
\EmailD{\href{mailto:ewoolgar@ualberta.ca}{ewoolgar@ualberta.ca}}

\ArticleDates{Received June 03, 2020, in final form November 21, 2020; Published online December 10, 2020}

\Abstract{Consider an essentially nonbranching metric measure space with the measure contraction property of Ohta and Sturm, or with a Ricci curvature lower bound in the sense of Lott, Sturm and Villani. We prove a sharp upper bound on the inscribed radius of any subset whose boundary has a suitably signed lower bound on its generalized mean curvature. This provides a nonsmooth analog to a result of Kasue (1983) and Li (2014). We prove a~stability statement concerning such bounds and~-- in the Riemannian curvature-dimension $(\RCRD)$ setting~-- characterize the cases of equality.}

\Keywords{curvature-dimension condition; synthetic mean curvature; optimal transport; comparison geometry; diameter bounds; singularity theorems; inscribed radius; inradius bounds; rigidity; measure contraction property}

\Classification{51K10; 53C21; 30L99; 83C75}

\renewcommand{\thefootnote}{\arabic{footnote}}
\setcounter{footnote}{0}

\section{Introduction}
Kasue proved a sharp estimate for the inscribed radius (or inradius, denoted $\inrad$) of a smooth, $n$-dimensional Riemannian manifold $M$ with nonnegative Ricci curvature and smooth boundary $\partial M$ whose mean curvature is bounded from below by $n-1$. More precisely, he concluded $\inrad_M\leq 1$ \cite{Kasue83}.
This result was also rediscovered by Li \cite{martinli} and extended
to weighted Riemannian manifolds with Bakry--\'Emery curvature bounds by Li--Wei~\cite{liwei_bakry_emery, liwei_rigidity} and Sakurai~\cite{sakurai}.
Their result can be seen either as a manifold-with-boundary analog of Bonnet and Myers' diameter bound,
or as a Riemannian analog of the Hawking singularity theorem from general rela\-tivity~\cite{Hawking66} (for the precise statement see \cite[Theorem~6.49]{minguzzi}). There has been considerable interest in generalizing Hawking's result
 to a nonsmooth setting \cite{Graf19+, KunzingerSteinbauerStojkovicVickers15, LuMinguzziOhta19+}.
 Motivated in part by this goal, we give a generalization of Kasue's result which is interesting in itself and can serve as a model for the Lorentzian case. Independently and simultaneously, Cavalletti and Mondino have proposed a synthetic new framework for Lorentzian geometry (also under investigation by one of us independently~\cite{McCann18+}) in which they establish an analog of the Hawking result~\cite{cm_new}.

In this note we generalize Kasue and Li's estimate to subsets $\Omega$ of a (potentially nonsmooth) space $X$ satisfying a curvature dimension condition $\CDD(K,N)$ with $K\in \mathbb R$ and $N>1$,
provided the topological boundary $\partial \Omega$ has a lower bound on its {\it inner mean curvature} in the sense of \cite{kettererHK}. The notion of inner mean curvature in \cite{kettererHK} is defined by means of the $1D$-localisation (needle decomposition) technique of Cavalletti and Mondino~\cite{cavmon} and coincides with the classical mean curvature of a hypersurface in the smooth context. We also assume that the boundary $\partial \Omega$ satisfies a measure theoretic regularity condition that is implied by an exterior ball condition. Hence, our result not only covers Kasue's theorem but also holds for a large class of domains in Alexandrov spaces or in Finsler manifolds. Kasue (and Li) were also able to prove a rigidity result analogous to Cheng's theorem~\cite{Cheng75} from the Bonnet--Myers context:
namely that, among smooth manifolds, their inscribed radius bound is obtained precisely
by the Euclidean unit ball. In the nonsmooth case there are also truncated cones that attain the maximal inradius; under an additional hypothesis known as~$\RCRD$,
 we prove that these are the only nonsmooth optimizers provided $\Omega$ is compact and its interior is connected.

To state our results first we recall the following definition.
For $\kappa\in \mathbb{R}$ we define $\cos_{\kappa}\colon \R\rightarrow \mathbb{R}$ as the solution of
\begin{gather}\label{trig ODE}
v''+\kappa v=0, \qquad \text{with} \quad v(0)=1 \qquad \text{and}\qquad v'(0)=0.
\end{gather}
The function $\sin_{\kappa}\colon \R \rightarrow \mathbb R$ is defined as solution of the same ordinary differential equation (ODE)
with initial values $v(0)=0$ and $v'(0)=1$. We define
\begin{gather}\label{pik}
\pi_{\kappa}:= \begin{cases} \dfrac{\pi}{\sqrt{\kappa}} & \mbox{if } \kappa>0, \\
\infty & \mbox{otherwise},
\end{cases}
\end{gather}
and $\tilde I_{\kappa}:=\overline{[0,\pi_\kappa)}$.
Let $K, H\in \mathbb{R}$ and $N> 1$. The Jacobian function is
\begin{gather}\label{Jacobian}
r\in \R\mapsto J_{K,H,N}(r):=\left(\cos_{K/{(N-1)}}(r) - \frac{H}{N-1}\sin_{K/(N-1)}(r)\right)_+^{N-1},
\end{gather}
where $(a)_+:=\max\{a, 0\}$ for $a\in \R$. Since $J_{K,H,N}(r)=J_{K,-H,N}(-r)$, its interval of positivity around the origin is given by $r \in (-r_{K,-H,N},r_{K,H,N})$, where
\begin{gather}\label{Jdomain}
r_{K,H,N} := \inf \{r\in (0,\infty)\colon J_{K,H,N}(r)=0\}.
\end{gather}
In \cite{Kasue83} and \cite{sakurai} the authors define
\begin{align}
\label{equ:ks}
s_{\kappa,\lambda}(r) = \cos_{\kappa}(r)- \lambda \sin_\kappa(r)
\end{align}
for $\kappa,\lambda\in \R$. They say the pair $(\kappa,\lambda)$ satisfies {\it the ball condition} if the equation $s_{\kappa, \lambda}(r)=0$ has a positive solution. The latter happens if and only if one of the following three cases holds: (1)~$\kappa>0$ and $\lambda\in \R$, (2)~$\kappa=0$ and $\lambda>0$ or (3)~$\kappa\leq 0$ and $\lambda>\sqrt{|\kappa|}$.
If $(\kappa,\lambda)=\big(\frac{K}{N-1},\frac{H}{N-1}\big)$, then~$r_{K,H,N}$ coincides with the smallest positive zero of $s_{\kappa,\lambda}$ if any exists;
moreover $s_{\kappa,\lambda}(r)<0$ for all $r>r_{K,H,N}$ if $\kappa \le 0$, while $s_{\kappa,\lambda}$ oscillates sinusoidally with mean zero and period
greater than $2 r_{K,H,N}$ if $\kappa>0$. In particular,
$r_{K,H,N}<\infty$ if and only if $\big(\frac{K}{N-1},\frac{H}{N-1}\big)$ satisfies the ball-condition.

For $\Omega \subset X$, letting $\Omega^c:= X \setminus \Omega$, our main theorem reads as follows:

\begin{Theorem}[inscribed radius bounds for metric measure spaces]\label{T:main}
Let $(X,d,\m)$ be an essentially nonbranching $\CDD(K',N)$ space with $K'\in \R$, $N\in (1,\infty)$ and $\supp \m=X$. Let $K, \kkapp\in \R$ such that $\big(\frac{K}{N-1},\kkapp\big)$ satisfies the ball condition. Let $\Omega\subset X$ be closed with {$\Omega\neq X$}, $\m(\Omega)>0$ and $\m(\partial \Omega)=0$ such that $\Omega$ satisfies the {\it restricted curvature-dimension condition} $\CDD_{r}(K,N)$ for $K\in \R$ $($Definition~{\rm \ref{def:cd})} and $\partial \Omega=S$ has finite inner curvature $($Definition~{\rm \ref{def:meancurvature})}. Assume the inner mean curvature $H_S^-$ satisfies $H^-_S\geq \kkapp (N-1)$ $\m_S$-a.e.\ where
$\m_S$ denotes the surface measure $($Definition~{\rm \ref{def:surfacemeasure})}.
Then
\begin{gather}\label{inradbound}
\inrad \Omega\leq r_{K,\kkapp(N-1),N},
\end{gather}
where $\inrad\Omega=\sup\limits_{x\in \Omega} d_{\Omega^c}(x)$ is the inscribed radius of $\Omega$.
\end{Theorem}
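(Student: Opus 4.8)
The plan is to use the $1D$-localisation (needle decomposition) technique of Cavalletti--Mondino to reduce the statement to a one-dimensional comparison estimate. First I would fix a point $x_0 \in \Omega$ realizing (or nearly realizing) the inscribed radius, so that $d_{\Omega^c}(x_0)$ is close to $\inrad\,\Omega$, and consider the signed distance function $u := d_{\Omega^c} - d_\Omega$ (or equivalently work with $-d_{\partial\Omega}$ on $\Omega$), which is $1$-Lipschitz on $X$. The needle decomposition applied to $u$ disintegrates $\m$ restricted to the transport set into conditional measures supported on transport rays (minimizing geodesics), each of which inherits a synthetic $\CDD(K,N)$-type density condition from the restricted curvature-dimension condition $\CDD_r(K,N)$ on $\Omega$. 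The point of introducing the boundary is that the endpoints of these rays that lie on $S = \partial\Omega$ carry the mean curvature information: by the very definition of finite inner curvature and of the inner mean curvature $H_S^-$ from \cite{kettererHK}, the one-dimensional conditional densities $h_\alpha$ along $\m$-a.e.\ ray satisfy, at the boundary endpoint, an initial condition forcing $(\log h_\alpha)'$ to be bounded below by $-\kkapp(N-1)$ (consistent with mean curvature $\ge \kkapp(N-1)$), while along the ray the density obeys the differential inequality characterising $\CDD(K,N)$ densities.

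The key step is then the one-dimensional ODE comparison. On each ray parametrised by arclength $t \in [0, L_\alpha]$ with the boundary endpoint at $t=0$ and moving into $\Omega$, the density $h_\alpha$ is (up to normalisation) a $\CDD(K,N)$ density, meaning $h_\alpha^{1/(N-1)}$ is, in the relevant sense, $\cos_{K/(N-1)}$-concave; combined with the boundary derivative bound $(\log h_\alpha)'(0^+) \ge -\kkapp(N-1)$, a Riccati/Sturm comparison argument shows that $h_\alpha^{1/(N-1)}(t) \le c \cdot s_{K/(N-1),\kkapp}(t)$ for a constant $c>0$, with equality of the zero sets forcing $L_\alpha \le r_{K,\kkapp(N-1),N}$ because $h_\alpha$ must remain positive (indeed stay bounded away from vanishing in the interior) along the ray while $s_{K/(N-1),\kkapp}$, hence the comparison Jacobian $J_{K,\kkapp(N-1),N}$, vanishes at $r_{K,\kkapp(N-1),N}$. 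This is the manifold-with-boundary analogue of the Bishop--Gromov/Heintze--Karcher argument, now carried out needle-by-needle. Since $\m(\Omega) > 0$, $\m(\partial\Omega)=0$ and $\supp\m = X$, almost every point of $\Omega$ near $x_0$ lies on such a ray emanating from $S$ with length at least $d_{\Omega^c}$ of that point minus an error, so taking the supremum over such points gives $\inrad\,\Omega \le r_{K,\kkapp(N-1),N}$.

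The main obstacle I expect is the careful handling of the boundary behaviour of the conditional densities: one must verify that $\m$-a.e.\ transport ray of $u$ actually terminates on $\partial\Omega$ (rather than, say, in the interior or at a branching point), that the surface measure $\m_S$ from Definition~\ref{def:surfacemeasure} is precisely the one through which the disintegration sees the boundary, and that the lower bound $H_S^- \ge \kkapp(N-1)$ $\m_S$-a.e.\ transfers to the correct sign of $(\log h_\alpha)'(0^+)$ for a.e.\ ray $\alpha$. This requires the measure-theoretic regularity of $\partial\Omega$ (the exterior-ball-type condition) to rule out a set of rays of positive measure whose endpoints are "bad" boundary points, and it requires knowing that the essentially nonbranching assumption makes the needle decomposition well-behaved. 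A secondary technical point is that $\CDD_r(K,N)$ (the \emph{restricted} condition on $\Omega$) rather than a global $\CDD(K,N)$ must be enough to get the density inequality on the pieces of rays lying inside $\Omega$; one localises the curvature-dimension inequality to plans supported on $\Omega$, which is exactly what $\CDD_r(K,N)$ is designed to provide. Once these boundary and localisation issues are settled, the $1D$ comparison and the final supremum are routine.
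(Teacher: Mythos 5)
Your proposal follows essentially the same route as the paper's proof: the $1D$-localisation of the signed distance function, the transfer of $\CDD_r(K,N)$ to the needle densities (Remark~\ref{rem_mer}), the boundary initial condition supplied by $H_S^-$ on the reach (with finite inner curvature guaranteeing that $\m$-a.e.\ needle meets $S$), a one-dimensional below-tangent/Riccati comparison (Lemma~\ref{lem:key} and Remark~\ref{R:key}) bounding each needle's length inside $\Omega$ by $r_{K,\kkapp(N-1),N}$, and finally passing from $\m$-a.e.\ point to all of $\Omega$ via continuity of $d_{\Omega^c}$ and $\supp\m=X$. The only quibble is a sign slip: with your parametrisation ($t$ increasing from $t=0$ on $S$ into $\Omega$), the hypothesis $H_S^-\geq \kkapp(N-1)$ yields $(\log h_\alpha)'(0^+)\leq -\kkapp(N-1)$ rather than $\geq$, but since the comparison conclusion you draw from it ($h_\alpha^{1/(N-1)}\leq c\, s_{K/(N-1),\kkapp}$, forcing the ray length bound) is the correct one, this does not affect the validity of the approach.
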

The theorem generalizes previous results for Riemannian manifolds \cite{Kasue83, martinli} and weighted Riemannian manifolds \cite{liwei_bakry_emery,liwei_rigidity, sakurai}. Moreover Theorem~\ref{T:main} also holds in the context of weighted Finsler manifolds and Alexandrov spaces and seems to be new in this context.

We also show:

\begin{Theorem}[stability]\label{T:main2}
Consider $(X,d,\m)$ and $\Omega\subset X$ as in the previous theorem. Then, for every $\epsilon>0$ there exists $\delta>0$ such that
\begin{gather*}
\inrad\Omega \leq r_{\bar K, \bar H, \bar N} +\epsilon
\end{gather*}
 provided $K\geq \bar K -\delta$, $H^-_S\geq \bar H -\delta$ $\m_S$-a.e.\ and $N\leq \bar N+\delta$ for $\bar K,\bar H\in \R $ and $\bar N\in (1,\infty)$.
 \end{Theorem}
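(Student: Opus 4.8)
The plan is to derive the stability statement as a direct consequence of Theorem~\ref{T:main} together with the continuity properties of the zero $r_{K,H,N}$ of the Jacobian as a function of its parameters. First I would record the monotonicity of $r_{K,H,N}$ in each of its three arguments: since $J_{K,H,N}(r) = \big(\cos_{K/(N-1)}(r) - \tfrac{H}{N-1}\sin_{K/(N-1)}(r)\big)_+^{N-1}$ and on $[0,r_{K,H,N}]$ the factor $\sin_{K/(N-1)}$ is nonnegative, increasing $H$, increasing $K$, or decreasing $N$ can only decrease $J_{K,H,N}(r)$ pointwise on the relevant interval (the first two comparisons are immediate; for $N$ one uses that $\cos_\kappa$ and $\sin_\kappa$ depend monotonically on $\kappa$ over the ranges in play). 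Hence $r_{K,H,N}$ is nonincreasing in $K$ and $H$ and nondecreasing in $N$. In particular, under the hypotheses $K \ge \bar K - \delta$, $H^-_S \ge \bar H - \delta$ and $N \le \bar N + \delta$ we obtain $r_{K, \kkapp(N-1), N} \le r_{\bar K - \delta,\, \bar H - \delta,\, \bar N + \delta}$ (writing $\kkapp(N-1) = H^-_S$ in the relevant essential-infimum sense, and using that $\CDD_r(\bar K - \delta, \bar N + \delta)$ follows from $\CDD_r(K,N)$ so that Theorem~\ref{T:main} applies with the perturbed parameters).

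Next I would invoke Theorem~\ref{T:main} with parameters $K' = \bar K - \delta$, $\kkapp' = (\bar H - \delta)/(N-1)$ and $N' = \bar N + \delta$ — checking that $\big(\tfrac{\bar K - \delta}{\bar N + \delta - 1}, \tfrac{\bar H - \delta}{\bar N + \delta - 1}\big)$ still satisfies the ball condition for $\delta$ small, which holds because the ball condition is an open condition on $(\kappa, \lambda)$ and is satisfied at $\delta = 0$ by assumption (here one must treat separately the boundary cases $\bar K = 0$ or $\bar H = \sqrt{|\bar K|}$, where a small perturbation could in principle destroy the ball condition; in those degenerate cases $r_{\bar K, \bar H, \bar N} = \infty$ or the perturbed $r$ may be $+\infty$, and the claimed inequality is then vacuous, so there is nothing to prove). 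This yields $\inrad \Omega \le r_{\bar K - \delta,\, \bar H - \delta,\, \bar N + \delta}$.

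Finally I would use continuity of $(K,H,N) \mapsto r_{K,H,N}$ at the point $(\bar K, \bar H, \bar N)$ wherever that value is finite. This continuity follows from continuous dependence of the solutions $\cos_\kappa, \sin_\kappa$ of the ODE \eqref{trig ODE} on the parameter $\kappa$, combined with the fact that at a point where the ball condition holds strictly the function $s_{\kappa,\lambda}$ crosses zero transversally (its derivative at the first zero is nonzero, as can be read off from the explicit sinusoidal/hyperbolic formulas), so the implicit function theorem gives a continuous local branch $r_{K,H,N}$. Thus, given $\epsilon > 0$, choosing $\delta > 0$ small enough that $r_{\bar K - \delta,\, \bar H - \delta,\, \bar N + \delta} \le r_{\bar K, \bar H, \bar N} + \epsilon$ completes the argument.

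The main obstacle — really the only subtle point — is the careful handling of the boundary cases of the ball condition, i.e.\ $(\kappa,\lambda) = (\bar K/(\bar N -1), \bar H/(\bar N -1))$ lying on the boundary of the ball-condition region ($\kappa = 0$ with $\lambda = 0$, or $\kappa < 0$ with $\lambda = \sqrt{|\kappa|}$): there $r_{\bar K, \bar H, \bar N}$ is either $+\infty$ or obtained as a limit that need not be approached continuously from the perturbed side, so one must argue that the asserted inequality is vacuous (its right-hand side being $+\infty$) rather than proved. Away from this closed, lower-dimensional degenerate set, everything reduces to elementary ODE continuity and the monotonicity observations above.
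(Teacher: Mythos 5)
Your argument is correct and follows essentially the same route as the paper: apply Theorem~\ref{T:main} with the perturbed parameters $(\bar K-\delta,\bar H-\delta,\bar N+\delta)$ (using that $\CDD(K,N)$ implies $\CDD(\bar K-\delta,\bar N+\delta)$), then conclude by continuity of $r_{K,H,N}$ in its parameters via the implicit function theorem, the case where the ball condition fails being vacuous since then $r_{\bar K,\bar H,\bar N}=\infty$. The monotonicity discussion in your first paragraph is superfluous for this scheme (and its justification of monotonicity in $N$ is the least convincing step), but nothing in the proof actually depends on it.
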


\begin{Remark}[definitions and improvements] Let us comment on the definitions in Theorem~\ref{T:main} and generalizations.
\begin{enumerate}\itemsep=0pt
\item The curvature-dimension conditions $\CDD(K,N)$ and the restricted curvature-dimension condition $\CDD_{r}(K,N)$ for an essentially nonbranching metric measure space $(X,d,\m)$ are defined in Definition~\ref{def:cd}. If $(X,d,\m)$ satisfies the condition $\CDD(K,N)$ then $\Omega\neq \varnothing$ trivially satisfies $\CDD_r(K,N)$ for the same~$K$. For this we note that for essentially nonbranching $\CDD(K,N)$ spaces, $L^2$-Wasserstein geodesics between $\m$-absolutely continuous probability measures are unique \cite{Mon-Cav-17}.

\item Appendix \ref{S:MCP} extends the conclusions of Theorems~\ref{T:main} and~\ref{T:main2} to the case where the $\CDD(K,N)$ hypothesis is replaced by the measure contraction property $\MCP(K,N)$ proposed in \cite{ohtmea, stugeo2}, still under the essentially nonbranching hypothesis.

\item The {\em backward mean curvature bound} introduced in Appendix~\ref{S:backwards} also suffices for the conclusion of the above theorems, provided the finiteness assumed of the inner curvature of $\partial \Omega=S$ is replaced by the requirement that the surface measure $\m_{S_0}$ be Radon.
This alternate framework also suffices for the rigidity result of Theorem~\ref{T:main3} below.
It is related to but distinct from a notion presented in~\cite{cm_new}.

\item The property ``having finite inner curvature'' (Definition \ref{def:meancurvature}) rules out inward pointing cusps and cones, and is implied by an exterior ball condition for $\Omega$ (Lemma~\ref{lem:ballcondition}). The
surface measure $\m_S$ is defined in Definition~\ref{def:surfacemeasure}.

\item For $S$ with finite inner curvature, the definition of generalized inner mean curvature $H^-_S$ is given in Definition~\ref{def:meancurvature}.
Let us briefly sketch the idea. Using a needle decomposition associated to the signed distance function $d_S:=d_{\Omega}-d_{{\Omega}^c}$,
one can disintegrate the reference measure $\m$ into needles, meaning into conditional measures $\{\m_{\alpha}\}_{\alpha \in Q}$ (for a quotient space~$Q$) that are supported on curves $\gamma_{\alpha}$ of maximal slope of~$d_S$. For $\mathfrak q$-almost every curve $\gamma_{\alpha}$ with respect to the quotient measure $\mathfrak q$
of $\m$ on $Q$, there exists a {conditional} density $h_{\alpha}$ of $\m_{\alpha}$ with respect to the $1$-dimensional Hausdorff measure $\mathcal H^1$. Then the inner mean curvature for $\m_S$-a.e.\ $p=\gamma_{\alpha}(t_0)\in S$ is defined as $\frac{{\rm d}^-}{{\rm d}t} \log h_{\alpha}(t_0)=H_S^-(p)$. This left derivative quantifies the extent to which a given collection of needles are spreading (i.e., capturing more measure) as they exit $\Omega$. We postpone details to the Sections~\ref{subsec:1D} and~\ref{subsec:meancurvature}.
In the case $(X,d,\m)=(M,d_g,\vol_g)$ for a Riemannian manifold $(M,g)$ and~$\partial \Omega$ is a hypersurface the inner mean curvature coincides with the classical mean curvature.

\item Our assumptions cover the case of a Riemannian manifold with boundary: If $(X,d,\m)=(M,d_g,\vol_g)$ for a $n$-dimensional Riemannian manifold $(M,g)$ with boundary and Ricci lower bound $\ric_M\geq K$, then one can always construct a geodesically convex, $n$-dimensional Riemannian manifold $\tilde M$ with boundary such that $M$ isometrically embeds into $\tilde M$, and such that $\ric_{\tilde M}\geq K'$~\cite{wong}.
In particular, one can consider $M$ as a $\CDD_r(K,n)$ space that is a subset of the $\CDD(K',n)$ space $\big(\tilde M, d_{\tilde M}, \vol_{\tilde M}\big)$ (Remark~5.8 in~\cite{kettererHK}).
\end{enumerate}
\end{Remark}

\subsection{Cones and spherical suspension}
For smooth, $n$-dimensional Riemannian manifolds with non-negative Ricci curvature and boundary that has mean curvature bounded from below by $n-1$, equality in the inradius estimate is obtained precisely by the Euclidean unit ball (see \cite{Kasue83,liwei_rigidity,martinli}). In the nonsmooth case,
truncated cones also attain the maximal inradius.

Let $(X,d,\m)$ be a metric measure space.
 \begin{enumerate}\itemsep=0pt
\item The {\it Euclidean $N$-cone} over $(X,d,\m)$ is defined as the metric measure space
\begin{gather*}
\big( [0,\infty)\times X/\sim, d_{\rm Eucl}, \m^N_{\rm Eucl}\big) =: [0,\infty)\times^N_{\id} X,
\end{gather*}
where the equivalence relation $\sim$ is defined by $(s,x)\sim (t,y)$ if and only if either $s=t=0$ or $(s,x)=(t,y)$.
The tip of the cone is denoted by $o$. The distance $d_{\rm Eucl}$ is defined by
\begin{gather*}
d_{\rm Eucl}^2((t,x),(s,y)):=t^2+s^2 -2ts \cos [d(x,y)\wedge \pi ],
\end{gather*}
where $a \wedge b := \min\{a,b\}$, and the measure $\m^N_{\rm Eucl}$ is given by $r^N{\rm d}r \otimes \dm$.

If an Euclidean $N$-cone $[0,\infty)\times^N_{\id} X$ is a manifold then $X=\mathbb S^n$, $N=n\in \N$ and $[0,\infty)\times^n_{\id}\mathbb S^n$ is isometric to $\R^{n+1}$.
\item
The {\it hyperbolic $N$-cone} is defined similarly:
\begin{gather*}
\big( [0,\infty)\times X/\sim, d_{\rm Hyp}, \m^N_{\rm Hyp}\big) =: [0,\infty)\times_{\sinh}^N X.
\end{gather*}
The distance $d_{\rm Hyp}$ is defined by
\begin{gather*}
\cosh d_{\rm Hyp} ((t,x),(s,y)):=\cosh t \cosh s- \sinh t\sinh s \cos [d(x,y)\wedge \pi ],
\end{gather*}
and the measure $\m^N_{\rm Hyp}$ is given by $\sinh^Nr\,{\rm d}r \otimes \dm$.

If a hyperbolic $N$-cone $[0,\infty)\times_{\sinh}^N X$ is a manifold then $X=\mathbb S^n$, $N=n$ and $[0,\infty)\times_{\sinh}^n \mathbb S^n$ is isometric to the $n$-dimensional hyperbolic plane.

\item
Similar, the {\it spherical $N$-suspension} over $(X,d,\m)$ is defined as the metric space
\begin{gather*}
\big( [0,\pi]\times X/\sim, d_{\rm Susp}, \m^N_{\rm Susp}\big) =: [0,\pi]\times_{\sin}^N X,
\end{gather*}
where the equivalence relation $\sim$ is now defined by $(s,x)\sim (t,y)$ if and only if either $s=t \in \{0,\pi\}$ or $(s,x)=(t,y)$. The distance $d_{\rm Susp}$ is defined by
\begin{gather*}
\cos d_{\rm Susp}((t,x), (s,y)):= \cos t \cos s + \sin t \sin s\cos[d(x,y)\wedge \pi],
\end{gather*}
and the measure $\m_{\rm Susp}^N$ is given by $\sin^N t \,{\rm d}t \otimes \dm$.

If a sphercial $N$-cone $[0,\pi]\times_{\sin}^N X$ is a manifold then $X=\mathbb S^n$, $N=n\in \N$ and $[0,\pi]\times_{\sin}^n \mathbb S^n$ is isometric to the $(n+1)$-dimensional standard sphere $\mathbb S^{n+1}$.
\end{enumerate}

In each case $0$ may also be used to denote the equivalence class of the points $(0,x)$.

The next result shows that in an appropriate setting, cones and suspensions are the only maximizers of our inscribed radius bound.
 This requires the {\it Riemannian curvature-dimension condition} $\RCRD(K,N)$ (Definition~\ref{def:RCD}), a
 strengthening of the curvature-dimension condition that rules out Finsler manifolds and yields isometric rigidity theorems for metric measure spaces. This condition is crucial in the proof of the next theorem, since it permits us to exploit
 the volume cone rigidity theorem by DePhilippis and Gigli~\cite{DGi}.
 For $K>0$ one can view Theorem~\ref{T:main3} as a~version of the maximal diameter theorem~\cite{ketterer2}
 adapted to mean convex subsets of $\RCRD$ spaces.
 {For rigidity results pertaining to other inequalities in nonsmooth or even discrete settings, see recent work of Ketterer \cite{Ketterer15}, Nakajima, Shioya \cite{NakajimaShioya19} and Cushing et al.~\cite{CushingKamtueKoolenLiuMunchPeyerinhoff20}.}

\begin{Theorem}[rigidity]\label{T:main3}\sloppy
Let $(X,d,\m)$ be $\RCRD(K,N)$ for $K\in \R$ and $N\in (1,\infty)$ and let $\Omega\subset X$ be closed with {$\Omega\neq X$}, $\m(\Omega)>0$, connected and non-empty interior $\Omega^{\circ}$, and $\m(\partial \Omega)=0$. We assume that $K\in \{N-1, 0, -(N-1)\}$, $\partial \Omega =S$ has finite inner curvature, $S\neq \{pt\}$, and the inner mean curvature $\m_S$-a.e.\ satisfies $H_S^-\geq \kkapp (N-1) \in \R$.
 Then, there exists $x\in X$ such that
 \[ d_{\Omega^c}(x)=\inrad \Omega =r_{K,\kkapp(N-1),N}\] if and only if $r_{K,\chi(N-1), N}<\infty$ and
 there exists an $\RCRD(N-2,N-1)$ space $Y$ such that~$\Omega^\circ$ becomes isometric to the ball
 $B_{r_{K,\kkapp(N-1),N}}(0)$ of radius $r_{K,\kkapp(N-1),N}$ around the cone tip in \mbox{$\tilde I_{\frac{K}{N-1}} \times_{\sin_{K/(N-1)}}^{N-1} Y$},
 when each is equipped with the induced intrinsic distance which it inherits from its ambient space.
Here $\tilde I_{\frac{K}{N-1}}:=\overline{\big[0,\pi_\frac{K}{N-1}\big)}$.
\end{Theorem}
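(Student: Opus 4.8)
\emph{Proof plan.}\ For the ``if'' direction put $\kappa:=\tfrac{K}{N-1}\in\{-1,0,1\}$ and $r:=r_{K,\chi(N-1),N}<\infty$; since inradius and inner mean curvature depend only on the intrinsic structure of $\Omega$, everything is a computation in $W:=\tilde I_{\kappa}\times^{N-1}_{\sin_{\kappa}}Y$. In the $\sin_{\kappa}$-warped metric the radial coordinate $\rho$ is $1$-Lipschitz and realized by radial geodesics, and the complement of $\{\rho<r\}$ cannot be reached without raising $\rho$ to $r$, so $d_{\Omega^c}=r-\rho$ on $\Omega^\circ\cong B_r(o)$ and $\inrad\Omega=r$ is attained at the tip $o$. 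Along the radial needles the conditional density of $\m^{N-1}_{W}$ is $\sin_{\kappa}(\rho)^{N-1}$, whence one computes $H_S^-\equiv(N-1)\tfrac{\cos_{\kappa}r}{\sin_{\kappa}r}=\chi(N-1)$, using that $s_{\kappa,\chi}(r)=0$ forces $\chi=\tfrac{\cos_{\kappa}r}{\sin_{\kappa}r}$; the $\CDD_r(K,N)$ and finite-inner-curvature conditions pass to $\Omega$, and $S\cong\{r\}\times Y\neq\{\mathrm{pt}\}$ since $Y\neq\{\mathrm{pt}\}$.

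For the converse, suppose $d_{\Omega^c}(x_0)=\inrad\Omega=r_{K,\chi(N-1),N}=:r$. An $\RCRD(K,N)$ space with $N<\infty$ is proper, so $\Omega\neq X$ forces $r<\infty$ (the first conclusion) and $r<\pi_\kappa$. Localise $d_S=d_\Omega-d_{\Omega^c}$ as in the proof of Theorem~\ref{T:main}: for $\mathfrak q$-a.e.\ needle $\gamma_\alpha$, parametrised by $t=d_{\Omega^c}$ with foot $\gamma_\alpha(0)\in S$, the density $h_\alpha=f_\alpha^{N-1}$ has $f_\alpha$ $\kappa$-concave on $(0,L_\alpha)$, has $f_\alpha'(0^+)/f_\alpha(0)\le-\chi$ (the mean-convexity hypothesis, since $H_S^-=-(N-1)f_\alpha'(0^+)/f_\alpha(0)$), and $L_\alpha\le r$. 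With $\psi(t):=\sin_{\kappa}(r-t)>0$ on $[0,L_\alpha)$ one has the Sturm identity $\tfrac{\de}{\de t}\big[(f_\alpha/\psi)'\psi^2\big]=\psi\,(f_\alpha''+\kappa f_\alpha)\le0$ and $(f_\alpha/\psi)'(0^+)\psi(0)^2=f_\alpha'(0^+)\sin_{\kappa}r+f_\alpha(0)\cos_{\kappa}r\le0$ (again using $\chi=\tfrac{\cos_{\kappa}r}{\sin_{\kappa}r}$), so $f_\alpha/\psi$ is non-increasing, i.e.\ $(\log h_\alpha)'\le-(N-1)\tfrac{\cos_{\kappa}(r-t)}{\sin_{\kappa}(r-t)}$. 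Assembling the needles (the remaining contributions being of the correct sign, as in the proof of Theorem~\ref{T:main}) yields, as a measure on $\Omega^\circ$,
\[
\Delta d_{\Omega^c}\ \le\ -(N-1)\,\frac{\cos_{\kappa}(r-d_{\Omega^c})}{\sin_{\kappa}(r-d_{\Omega^c})}.
\]

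Let $v$ be the model radial profile of $d_{\Omega^c}$ — namely $v:=\cos_{\kappa}(r-d_{\Omega^c})$ when $\kappa=\pm1$ and $v:=\tfrac12(r-d_{\Omega^c})^2$ when $\kappa=0$ — and $\bar v$ the same profile of $d_{x_0}$. Since $|\nabla d_{\Omega^c}|=1$ $\m$-a.e.\ and the profile is monotone on $[0,r]\subseteq[0,\pi_\kappa)$, the chain rule for the measure-valued Laplacian turns the displayed bound into a linear differential inequality on $\Omega^\circ$ pointing opposite to the Laplacian comparison $\bar v$ satisfies in $X$ (for instance $\Delta v\le-Nv$ versus $\Delta\bar v\ge-N\bar v$ when $\kappa=1$; here Bonnet--Myers keeps the profile in its monotone range). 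Because $\min_{p\in S}d(x_0,p)=r$ — a compactness argument using properness together with the fact that a geodesic from $x_0$ to $\Omega^c$ must meet $S$ — the triangle inequality gives $d_{x_0}+d_{\Omega^c}\ge r$ on $\Omega$; hence the difference $w$ of $\bar v$ and $v$, signed so that $w\ge0$, is a non-negative supersolution of a linear operator vanishing at the interior point $x_0$, and the strong maximum principle on the \emph{connected} set $\Omega^\circ$ forces $w\equiv0$, i.e.\ $d_{x_0}=r-d_{\Omega^c}$ on $\Omega^\circ$. Feeding this back, both inequalities become equalities, so $\bar v$ saturates its Laplacian comparison on $B_r(x_0)$ ($\Delta\tfrac12 d_{x_0}^2=N$ if $\kappa=0$, and $\Delta\cos_{\kappa}d_{x_0}=-\kappa N\cos_{\kappa}d_{x_0}$ if $\kappa=\pm1$); moreover $\Omega^\circ=B_r(x_0)$, since $d_{x_0}=r-d_{\Omega^c}<r$ there while the open set $B_r(x_0)$ lies in $\Omega$. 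Finally, the volume-cone-implies-metric-cone theorem of DePhilippis--Gigli~\cite{DGi} (the case $\kappa=0$; for $\kappa=1$ this is the maximal-diameter rigidity~\cite{ketterer2}, for $\kappa=-1$ its hyperbolic analogue) provides an $\RCRD(N-2,N-1)$ space $Y$ — non-trivial since $S\neq\{\mathrm{pt}\}$ — and an isometry, for the induced intrinsic distances, of $\Omega^\circ=B_r(x_0)$ with $B_r(o)\subset\tilde I_{\kappa}\times^{N-1}_{\sin_{\kappa}}Y$.

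I expect the main obstacle to be the first step of the converse: extracting the pointwise-in-direction Laplacian bound on $d_{\Omega^c}$ (not merely the length bound $L_\alpha\le r$ used for Theorem~\ref{T:main}) from the localisation, and making the ensuing chain rule for the measure-valued Laplacian — together with $|\nabla d_{\Omega^c}|=1$ $\m$-a.e.\ and the correct treatment of the cut locus of $S$ — fully rigorous. Once that is in place the propagation of equality is a short maximum-principle argument resting on connectedness (the analogue of the ``$d_x+d_y\equiv\operatorname{diam}$'' step in the maximal-diameter theorem, and where the essentially non-branching and connectedness hypotheses genuinely enter), and the concluding structure statement is exactly the content of the cited cone-rigidity results.
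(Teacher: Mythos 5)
Your strategy for the converse is essentially the paper's: the needle/Sturm comparison you derive is precisely the improved Laplace comparison of Corollary~\ref{cor:laplace} (note $s_{\frac{K}{N-1},\kkapp}(t)$ is proportional to $\sin_{K/(N-1)}(r-t)$ once $s_{\frac{K}{N-1},\kkapp}(r)=0$), it is paired with the Laplace comparison for $d_{x_0}$, a super-/sub-harmonic quantity is produced and shown constant by the strong maximum principle, and De~Philippis--Gigli's volume-cone rigidity finishes. The only structural difference is in how the two comparisons are combined: the paper multiplies them by $\sin_{\frac{K}{N-1}}(d_{x_0})$ and $s_{\frac{K}{N-1},\kkapp}(d_{\Omega^c})$ and adds, using the addition formula to get ${\bf \Delta}(d_{x_0}+d_{\Omega^c})\le c\,s_{\frac{K}{N-1},\kkapp}(d_{x_0}+d_{\Omega^c})\m\le 0$, whereas you compose with model profiles and invoke a chain rule for the measure-valued Laplacian; both lead to the same maximum-principle step.

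There are two genuine gaps, and the more serious one is not the chain-rule issue you flag. The comparison for $d_{x_0}$ (inequality \eqref{xxxx}) holds only on $\Omega^\circ\setminus\{x_0\}$, so the strong maximum principle propagates equality only on a connected component of the \emph{punctured} domain; connectedness of $\Omega^\circ$ does not give connectedness of $\Omega^\circ\setminus\{x_0\}$, and the paper devotes an entire separate step to this (a Wasserstein-geodesic argument using essential non-branching, combined with the cone density to exclude the one-dimensional case). As written you apply the maximum principle ``on the connected set $\Omega^\circ$'' while your supersolution property fails at $x_0$; your profile trick could conceivably repair this if you verify the inequality for $\bar v$ as a measure inequality across $x_0$ (plausible for $\kappa=0$ via ${\bf\Delta}\tfrac12 d_{x_0}^2\le N\m$, less obvious for $\kappa=\pm1$), but that verification is missing. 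Second, ``both inequalities become equalities'' is not yet hypothesis \eqref{volumecone} of Theorem~\ref{thm:gp}: one must solve the resulting ODE for the needle densities $h_\alpha$ along the rays from $x_0$ and integrate to obtain $\m(B_R(x_0))\propto\int_0^R\sin_{K/(N-1)}^{N-1}$, then rule out cases (1) and (2) of the trichotomy using $S\neq\{pt\}$ and Remark~\ref{rem:refined0}, and upgrade the resulting local isometry to an isometry for the induced intrinsic distances. These last points are routine but form a nontrivial part of the paper's argument; the cut-locus and chain-rule concerns you raise are handled there by Theorem~\ref{thm:cm} and Lemma~\ref{lem:riccati}.
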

The theorem generalizes corresponding rigidity results for Riemannian manifolds \cite{Kasue83, martinli} and weighted Riemannian manifolds \cite{liwei_bakry_emery,liwei_rigidity, sakurai}.

\begin{Remark}[easy direction]
In the above rigidity theorem one direction is obvious. Let us explore this just for the case $K=0$ and $\kkapp=1$.

Let $Y$ be an $\RCRD(N-2,N-1)$ space. Both the Euclidean $N$-cone $X=[0,\infty) \times^N_{\id} Y$ and its truncation
$\Omega = [0,1]\times^N_{\id} Y$ are geodesically convex and satisfy $\RCRD(0,N)$~\cite{ketterer}.
The distance function $d_{\Omega^c}$ from $(X,d,\m)$ restricted to $\Omega$ is given by $d_{\Omega^c}(t,x)=1-t$.
In particular, $r_{0,N-1,N}=d_{\Omega^c}((0,x))=d_{\Omega^c}(o)=1$.

Moreover, $S$ has (inner) mean curvature equal to $N-1$ in the sense of Definition~\ref{def:meancurvature} in $X=[0,\infty)\times^{N-1}_{\id} Y$.
Indeed, we can see that points $(s,x)$ and $(t,y)$ in $\Omega$ lie on the same needle if and only if either $x=y$ or $st=0$. Hence, the needles in $\Omega$ for the corresponding $1D$-localization are $t\in (0,1)\mapsto\gamma(t)=(1-t,x)$, $x\in Y$. One can also easily check that $h_\gamma^{{1}/{N-1}}(t)=t$ for all needles $\gamma$ in the corresponding disintegration of $\m|_{\Omega}$. Hence $H_{\partial \Omega}^-\equiv N-1$.
\end{Remark}

\begin{Remark}
We may allow $S=\{pt\}$ in Theorem~\ref{T:main3} if we adopt the convention that $Y=\{pt\}$ satisfies $\RCRD(N-2,N-1)$ for all $N>1$.
This would correspond to case~(1) of De~Philippis and Gigli's classification, Theorem~\ref{thm:gp} below.
\end{Remark}

\section{Preliminaries}
\subsection{Curvature-dimension condition}

Let $(X,d)$ be a complete and separable metric space and let $\m$ be a locally finite Borel measure. We call $(X,d,\m)$ a metric measure space. We always assume $\supp \m=X$ and {$X\neq \{pt\}$}.

The length of a continuous curve $\gamma\colon [a,b]\rightarrow X$ is $L(\gamma)=\sup\big\{\sum_{i=1}^{k-1} d(\gamma(t_i),\gamma(t_{i+1}))\big\}\in [0,\infty]$ where the supremum is w.r.t.\ any subdivision of $[a,b]$ given by $a= t_1\leq t_2 \leq \dots \leq t_{k-1} \leq t_k=b$ and $k\in \mathbb{N}$.
Obviously $L(\gamma) \ge d(\gamma(a),\gamma(b))$;
a geodesic refers to any continuous curve $\gamma\colon [a,b]\rightarrow X$ saturating this bound.
We denote the set of constant speed geodesics $\gamma\colon [0,1]\rightarrow X$ with~$\mathcal G(X)$;
these are characterized by the identity
\[d(\gamma_s,\gamma_t) = (t-s) d(\gamma_0,\gamma_1)\]
for all $0\le s\le t \le 1$.
For $t\in [0,1]$ let $e_t\colon \gamma\in\mathcal G(X) \mapsto \gamma(t)$ be the evaluation map.
A subset of geodesics $F\subset \mathcal{G}(X)$ is said to be {\it nonbranching} if for any two geodesics $\gamma, \hat \gamma \in F$
such that there exists $\epsilon\in (0,1)$ with $\gamma|_{(0,\epsilon)}=\hat \gamma|_{(0,\epsilon)}$, it follows that $\gamma=\hat \gamma$.

\begin{Example}[Euclidean geodesics]When $X \subset \R^n$ is convex and $d(x,y)=|x-y|$ then $\mathcal G(X)$ consists of the affine maps $\gamma\colon [0,1]\to X$.
\end{Example}

The set of (Borel) probability measures on $(X,d,\m)$ is denoted with $\mathcal P(X)$, the subset of probability measures with finite second moment is~$\mathcal P^2(X)$, the set of probability measures in~$\mathcal P^2(X)$ that are $\m$-absolutely continuous is denoted with $\mathcal P^2(X,\m)$ and the subset of measures in $\mathcal P^2(X,\m)$ with bounded support is denoted with $\mathcal{P}_b^2(X,\m)$.

The space $\mathcal P^2(X)$ is equipped with the $L^2$-Wasserstein distance $W_2$, e.g.,~\cite{viltot}.
A dynamical optimal coupling is a~probability measure $\Pi\in \mathcal P(\mathcal G(X))$ such that $t\in [0,1]\mapsto (e_t)_{\#}\Pi$ is a~$W_2$-geodesic in $\mathcal P^2(X)$ where $(e_t)_{\#}\Pi$ denotes the push-forward under the map $\gamma\mapsto e_t(\gamma):=\gamma(t)$. The set of dynamical optimal couplings $\Pi\in \mathcal P(\mathcal G^{}(X))$ between $\mu_0,\mu_1\in \mathcal P^2(X)$ is denoted with $\OptGeo(\mu_0,\mu_1)$.

A metric measure space $(X,d,\m)$ is called \textit{essentially nonbranching} if for any pair $\mu_0,\mu_1\in \mathcal P^2(X,\m)$ any $\Pi\in \OptGeo(\mu_0,\mu_1)$ is concentrated on a set of nonbranching geodesics.

\begin{Definition}[distortion coefficients] For $K\in \mathbb{R}$, $N\in (0,\infty)$ and $\theta\geq 0$ we define the \textit{distortion coefficient} as
\begin{gather*}
t\in [0,1]\mapsto \sigma_{K,N}^{(t)}(\theta):=\begin{cases}
 \dfrac{\sin_{K/N}(t\theta)}{\sin_{K/N}(\theta)}\ &\mbox{if } \theta\in [0,\pi_{K/N}),\\
 \infty\ & \mbox{otherwise},
 \end{cases}
\end{gather*}
where $\pi_\kkapp:=
 \infty \ \textrm{if } \kkapp\le 0$ and $\pi_\kkapp
:=\frac{\pi}{\sqrt \kkapp}\ \textrm{if } \kkapp> 0$.
Here $\sin_{K/N}$ was defined after \eqref{trig ODE}, and $\sigma_{K,N}^{(t)}(0)=t$.
Moreover, for $K\in \mathbb{R}$, $N\in [1,\infty)$ and $\theta\geq 0$ the \textit{modified distortion coefficient} is defined as
\begin{gather*}
t\in [0,1]\mapsto \tau_{K,N}^{(t)}(\theta):=
 t^{\frac{1}{N}}\big[\sigma_{K,N-1}^{(t)}(\theta)\big]^{1-\frac{1}{N}},
 \end{gather*}
where our conventions are $0\cdot \infty :=0$ and $\infty^0:=1$.
\end{Definition}

\begin{Definition}[curvature-dimension conditions \cite{lottvillani, stugeo2}]\label{def:cd} An essentially nonbranching metric measure space $(X,d,\m)$ satisfies the \textit{curvature-dimension condition} $\CDD(K,N)$ for $K\in \mathbb{R}$ and $N\in [1,\infty)$ if for every $\mu_0,\mu_1\in \mathcal{P}_b^2(X,\m)$
there exists a dynamical optimal coupling $\Pi$ between~$\mu_0$ and~$\mu_1$ such that for all $t\in (0,1)$
\begin{gather}
\rho_t(\gamma_t)^{-\frac{1}{N}}\geq \tau_{K,N}^{(1-t)}(d(\gamma_0,\gamma_1))\rho_0(\gamma_0)^{-\frac{1}{N}}\nonumber\\
\hphantom{\rho_t(\gamma_t)^{-\frac{1}{N}}\geq}{} +\tau_{K,N}^{(t)}(d(\gamma_0,\gamma_1))\rho_1(\gamma_1)^{-\frac{1}{N}}
\qquad \mbox{for} \quad \Pi\mbox{-a.e.}\ \gamma\in \mathcal{G}(X),\label{cdcondition}
\end{gather}where $(e_t)_{\#}\Pi=\rho_t\m$.

Now instead suppose $(X,d,\m)$ satisfies $\CDD(K',N')$ for some $K' \in \R$ and $N' \ge 1$. We say a~subset
$\Omega\subset X$ with $\m(\Omega)>0$ satisfies the {\it restricted curvature-dimension condition} $\CDD_{r}(K,N)$ if for every dynamical optimal coupling $\Pi$ with $(e_t)_{\#}\Pi(\Omega)=1$ for all $t\in [0,1]$
and $\mu_0,\mu_1\in \mathcal{P}_b^2(X,\m)$, \eqref{cdcondition} holds for all $t\in [0,1]$.
\end{Definition}

\begin{Remark}[locally compact geodesic spaces]\label{R:Heine-Borel}
A $\CDD(K,N)$ space $(X,d,\m)$ for $N\in [1,\infty)$ is geodesic and locally compact. Hence, by the metric Hopf--Rinow theorem the space is proper (i.e., Heine--Borel) \cite[Theorem~2.5.28]{bbi}.
\end{Remark}

\subsection{Disintegration of measures} For further details about the content of this section we refer to \cite[Section~452]{fremlin}.

Let $(R,\mathcal R)$ be a measurable space, and let $\mathfrak Q\colon R\rightarrow Q$ be a map for a set $Q$. One can equip~$Q$ with the $\sigma$-algebra $\mathcal Q$ that is induced by $\mathfrak Q$ where $B\in \mathcal Q$ if $\mathfrak Q^{-1}(B)\in \mathcal{R}$. Given a measure $\m$ on $(R,\mathcal R)$, one can define its quotient measure $\mathfrak q$ on $Q$ via the push-forward $\mathfrak Q_{\#} \m=: \mathfrak q$.

\begin{Definition}[disintegration of measures]\label{D:disintegration}
A disintegration of a probability measure $\m$ that is consistent with $\mathfrak Q$ is a map $(B,\alpha)\in \mathcal R \times Q\mapsto \m_{\alpha}(B)\in [0,1]$ such that it follows
\begin{itemize}\itemsep=0pt
\item $\m_{\alpha}$ is a probability measure on $(R,\mathcal R)$ for every $\alpha\in Q$,
\item $\alpha\mapsto \m_{\alpha}(B)$ is $\mathfrak q$-measurable for every $B\in \mathcal R$,
\end{itemize}
and for all $B\in \mathcal R$ and $C\in \mathcal Q$ the {\it consistency condition}
\[
\m\big(B\cap \mathfrak Q^{-1}(C)\big)=\int_C \m_{\alpha}(B) \mathfrak q({\rm d}\alpha)
\]
holds. We use the notation $\{\m_{\alpha}\}_{\alpha\in Q}$ for such a~disintegration. We call the measures $\m_{\alpha}$ {\it conditional probability measures}
or {\it conditional measures}.
A~disintegration $\{\m_{\alpha}\}_{\alpha\in Q}$ consistent with $\mathfrak Q$
 is called strongly consistent if for $\mathfrak q$-a.e.\ $\alpha$ we have $\m_{\alpha}\big(\mathfrak Q^{-1}(\alpha)\big)=1$.
\end{Definition}

The following theorem is standard:

\begin{Theorem}[existence of unique disintegrations]
Assume that $(R,\mathcal R, \m)$ is a countably generated probability space and $R=\bigcup_{\alpha\in Q}R_{\alpha}$ is a partition of $R$. Let $\mathfrak Q\colon R\rightarrow Q$ be the quotient map associated to this partition, that is $\alpha =\mathfrak Q(x)$ if and only if $x\in R_{\alpha}$ and assume the corresponding quotient space $(Q,\mathcal Q)$ is a Polish space.

Then, there exists a strongly consistent disintegration $\{\m_{\alpha}\}_{\alpha\in Q}$ of $\m$ with respect to \mbox{$\mathfrak Q\colon R\!\rightarrow\! Q$} that is unique in the following sense: if $ \{\m'_{\alpha}\}_{\alpha\in Q}$ is another consistent disintegration of $\m$ {with respect to} $\mathfrak Q$ then $\m_{\alpha}=\m'_{\alpha}$ for $\mathfrak q$-a.e.\ $\alpha \in Q$.
\end{Theorem}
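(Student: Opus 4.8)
The plan is to dispatch uniqueness first, since it is elementary, and then to reduce existence to the classical measure-theoretic disintegration theorem, after which strong consistency follows from the Polish structure of $Q$.

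\emph{Uniqueness.} Fix a countable algebra $\mathcal A=\{B_n\}_{n\in\N}$ generating $\mathcal R$ (available since $(R,\mathcal R,\m)$ is countably generated). If $\{\m_\alpha\}$ and $\{\m'_\alpha\}$ are two consistent disintegrations, then for each fixed $B_n$ the consistency condition, applied with an arbitrary $C\in\mathcal Q$, gives
\begin{gather*}
\int_C \m_\alpha(B_n)\,\mathfrak q({\rm d}\alpha)=\m\big(B_n\cap\mathfrak Q^{-1}(C)\big)=\int_C \m'_\alpha(B_n)\,\mathfrak q({\rm d}\alpha);
\end{gather*}
hence the bounded $\mathfrak q$-measurable function $\alpha\mapsto\m_\alpha(B_n)-\m'_\alpha(B_n)$ integrates to zero over every $\mathcal Q$-set and therefore vanishes $\mathfrak q$-a.e. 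Discarding the countable union of these null sets leaves a $\mathfrak q$-conull $Q'\subseteq Q$ on which $\m_\alpha$ and $\m'_\alpha$ coincide on all of $\mathcal A$; as $\mathcal A$ is a $\pi$-system generating $\mathcal R$ and both conditional measures are probabilities, Dynkin's lemma forces $\m_\alpha=\m'_\alpha$ on $\mathcal R$ for each $\alpha\in Q'$.

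\emph{Existence.} I would reduce to the standard setting by embedding $R$ into a compact metrizable space: with $\mathcal A=\{B_n\}$ as above, the map $\iota\colon R\to\{0,1\}^{\N}$, $\iota(x):=(\mathbf 1_{B_n}(x))_n$, satisfies $\mathcal R=\iota^{-1}\big(\mathcal B(\{0,1\}^{\N})\big)$, and since the Borel sets of the Polish space $Q$ separate points, $\mathfrak Q$ factors through $\iota$ by a Borel map $\bar{\mathfrak Q}$. Pushing $\m$ forward by $\iota$ thus places one in the classical situation of a Borel probability measure on a compact metric space together with a Borel map into the Polish space $Q$, where a disintegration exists (equivalently, a regular conditional probability given $\mathfrak Q^{-1}(\mathcal Q)$ exists); this is the disintegration theorem for countably generated spaces, \cite{fremlin}. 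Concretely, one lets $g_n$ be a $\mathcal Q$-measurable representative of $\EE_\m[\mathbf 1_{B_n}\mid\mathfrak Q^{-1}(\mathcal Q)]$ on $Q$, so that $\int_C g_n(\alpha)\,\mathfrak q({\rm d}\alpha)=\m\big(B_n\cap\mathfrak Q^{-1}(C)\big)$ for all $C\in\mathcal Q$; off a single $\mathfrak q$-null set the assignment $B_n\mapsto g_n(\alpha)$ respects the countably many Boolean identities valid in $\mathcal A$ and hence defines a finitely additive probability on $\mathcal A$. The real work is to upgrade this to a countably additive Borel probability $\m_\alpha$ on $\mathcal R=\sigma(\mathcal A)$ for $\mathfrak q$-a.e.\ $\alpha$; this is where compactness is essential, because on $\{0,1\}^{\N}$ a decreasing sequence of non-empty clopen sets has non-empty intersection, which makes continuity from above at $\varnothing$ automatic and hence gives countable additivity via Carath\'eodory's extension. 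A Dynkin-class argument then yields $\mathfrak q$-measurability of $\alpha\mapsto\m_\alpha(B)$ for every $B\in\mathcal R$, and the consistency condition holds by construction.

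\emph{Strong consistency, and the main obstacle.} To promote the disintegration to a strongly consistent one, fix a countable algebra $\mathcal C_0$ generating $\mathcal Q$ and containing a countable base for the topology of $Q$. For $C\in\mathcal C_0$, the consistency condition with $B:=\mathfrak Q^{-1}(Q\setminus C)$ and the $\mathcal Q$-set $C$ reads
\begin{gather*}
\int_C\m_\alpha\big(\mathfrak Q^{-1}(Q\setminus C)\big)\,\mathfrak q({\rm d}\alpha)=\m\big(\mathfrak Q^{-1}(Q\setminus C)\cap\mathfrak Q^{-1}(C)\big)=0,
\end{gather*}
so $\m_\alpha(\mathfrak Q^{-1}(C))=1$ for $\mathfrak q$-a.e.\ $\alpha\in C$. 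Intersecting the countably many exceptional null sets and letting $C\ni\alpha$ shrink to $\{\alpha\}$ through the base yields $\m_\alpha(R_\alpha)=\m_\alpha\big(\mathfrak Q^{-1}(\alpha)\big)=1$ for $\mathfrak q$-a.e.\ $\alpha$; note that $\{\alpha\}\in\mathcal Q$, whence $R_\alpha\in\mathcal R$, exactly because $Q$ is Polish. I expect the genuine obstacle to be none of these individual steps but rather the careful bookkeeping of the reduction to a standard Borel domain, together with the classical finite-to-countable additivity upgrade it contains; since that is precisely the content of the cited disintegration theorem, in the write-up I would invoke \cite{fremlin} for that core and present in full detail only the uniqueness and the passage to strong consistency.
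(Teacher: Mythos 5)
The paper offers no proof of this statement: it is introduced with ``The following theorem is standard'' and the surrounding subsection simply defers to \cite[Section~452]{fremlin}, so there is no argument of the authors' to measure yours against. Your outline is the classical textbook proof and is essentially correct. Uniqueness via a countable generating $\pi$-system and Dynkin's lemma is complete as written, and the passage to strong consistency via a countable base of the Polish topology of $Q$ together with continuity from above of each $\m_\alpha$ is also fine (your observation that $\{\alpha\}\in\mathcal Q$, hence $R_\alpha\in\mathcal R$, is exactly where Polishness of $(Q,\mathcal Q)$ enters --- it is also what justifies factoring $\mathfrak Q$ through $\iota$, since $R_{\mathfrak Q(x)}$ is then a union of atoms of $\sigma(\mathcal A)$). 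For existence, the one step you wave at as ``bookkeeping'' that genuinely deserves more than that label is the transfer of the conditional measures back from the compactification to $(R,\mathcal R)$: the Carath\'eodory/compactness argument produces Borel probabilities $\tilde\m_\alpha$ on $\{0,1\}^{\N}$, and to define $\m_\alpha\big(\iota^{-1}(E)\big):=\tilde\m_\alpha(E)$ unambiguously one must know that $\tilde\m_\alpha$ gives outer measure $1$ to $\iota(R)$ for $\mathfrak q$-a.e.\ $\alpha$; propagating the fact that $\iota_\#\m$ has full outer measure on $\iota(R)$ through to almost every conditional measure is the only nontrivial content of the cited disintegration theorem beyond what you have written. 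Since you explicitly invoke \cite{fremlin} for exactly that core, the proposal stands as an acceptable (indeed more detailed than the paper's) treatment.
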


\subsection[1D-localization]{$\boldsymbol{1D}$-localization}\label{subsec:1D}

In this section we will recall the basics of the localization technique introduced by Cavalletti and Mondino for 1-Lipschitz
functions as a nonsmooth analog of Klartag's needle decomposition: needle refers to any geodesic along
which the Lipschitz function attains its maximum slope, also called transport rays here and by Klartag and others \cite{EvansGangbo99,FeldmanMcCann02,Klartag17}.
 The presentation follows Sections~3 and~4 in \cite{cavmon}. We assume familiarity with basic concepts in optimal transport (for instance~\cite{viltot}).

Let $(X,d,\m)$ be a proper metric measure space (with $\supp\m =X$ as we always assume).

Let $u\colon X\rightarrow \mathbb{R}$ be a $1$-Lipschitz function. Then the {\it transport ordering}
\begin{gather*}
\Gamma_u:=\{(x,y)\in X\times X\colon u(y)-u(x)=d(x,y)\}
\end{gather*}
is a $d$-cyclically monotone set, and one defines
$\Gamma_u^{-1}=\{(x,y)\in X\times X\colon (y,x)\in \Gamma_u\}$.

Note that we switch orientation in comparison to \cite{cavmon} where Cavalletti and Mondino define~$\Gamma_u$ as~$\Gamma_u^{-1}$.

The union $\Gamma_u \cup \Gamma_u^{-1}$ defines a relation $R_u$ on $X\times X$, and $R_u$ induces the {\it transport set with endpoint and branching points}
\[ \mathcal T_{u,e}:= P_1(R_u\backslash \{(x,y)\colon x=y\in X\})\subset X,\]
where $P_1(x,y)=x$. For $x\in \T_{u,e}$ one defines $\Gamma_u(x):=\{y\in X\colon (x,y)\in \Gamma_u\}$,
and similarly~$\Gamma_u^{-1}(x)$ and~$R_u(x)$. Since $u$ is $1$-Lipschitz,
 $\Gamma_u$, $\Gamma_u^{-1}$ and $R_u$ are closed, as are $\Gamma_u(x)$, $\Gamma_u^{-1}(x)$ and~$R_u(x)$.

The {\it forward} and {\it backward branching points} are defined respectively as
\begin{gather*}
A_{+} := \{x\in \mathcal T_{u,e}\colon \exists\, z,w\in \Gamma_u(x) \mbox{ \& } (z,w)\notin R_u\}, \\
 A_{-} := \{x\in \mathcal T_{u,e} \colon \exists\, z,w\in \Gamma_u^{-1}(x) \mbox{ \& } (z,w)\notin R_u\}.
\end{gather*}
Then one considers the {\it $($nonbranched$)$ transport set} as $\mathcal T_u:=\mathcal T_{u,e}\backslash (A_+ \cup A_-)$ and the {\it $($non\-bran\-ched$)$ transport relation} as the restriction of $R_u$ to $\mathcal T_u\times \mathcal T_u$.

The sets $\T_{u,e}$, $A_{+}$ and $A_-$ are $\sigma$-compact (\cite[Remark~3.3]{cavmon} and \cite[Lemma~4.3]{cavom} respectively), and $\T_u$ is a Borel set. In \cite[Theorem~4.6]{cavom} Cavalletti shows that the restriction of $R_u$ to $\mathcal T_u\times \mathcal T_u$ is an equivalence relation.
Hence, from $R_u$ one obtains a partition of $\mathcal T_u$ into a disjoint family of equivalence classes $\{X_{\alpha}\}_{\alpha\in Q}$. There exists a measurable section $s\colon \T_u\rightarrow \T_u$ \cite[Proposition~5.2]{cavom}, such that if $(x,s(x)) \in R_u$ and $(y,x)\in R_u$ then $s(x)=s(y)$, and $Q$ can be identified with the image of $\T_u$ under~$s$. Every $X_{\alpha}$ is isometric to an interval $I_\alpha\subset\mathbb{R}$ (cf.\ \cite[Lemma~3.1]{cavmon} and the comment after Proposition~3.7 in~\cite{cavmon}) via a distance preserving map $\gamma_{\alpha}\colon I_\alpha \rightarrow X_{\alpha}$ where $\gamma_\alpha$ is parametrized such that $d(\gamma_\alpha(t), s(\gamma_\alpha(t)))=\sgn(\gamma_\alpha(t))t$, $t\in I_\alpha$, and where $\sgn x$ is the sign of $u(x)-u(s(x))$. The map $\gamma_{\alpha}\colon I_\alpha\rightarrow X$ extends to a geodesic also denoted $\gamma_{\alpha}$ and defined on the closure $\overline{I}_{\alpha}$ of $I_{\alpha}$. We set $\overline{I}_{\alpha}=[a(X_{\alpha}),b(X_{\alpha})]$.

Then, the quotient map $\mathfrak Q\colon \T_u\rightarrow Q$ given by the section $s$ above is measurable, and we set $\mathfrak q:= \mathfrak Q_{\#}\m|_{\T_u}$. {Hence, we can and will consider $Q$ as a subset of $X$, namely the image of $s$, equipped with the induced measurable structure, and $\mathfrak q$ as a Borel measure on $X$. By inner regularity we replace $Q$ with a Borel set $Q'$ such that $\mathfrak q(Q\backslash Q')=0$ and in the following we denote $Q'$ by $Q$} (compare with \cite[Proposition~3.5]{cavmon} and the following remarks).

In \cite[Theorem 3.3]{cav-mon-lapl-18}, Cavalletti and Mondino extend Definition~\ref{D:disintegration} to disintegrate measures~$\m$ which are merely $\sigma$-finite
by using a positive function $f$ on $X$ to relate $\m$ to a probability measure~$f(x)\dm(x)$. Using the framework of this extension, which we also adopt, they prove:

\begin{Theorem}[disintegration into needles/transport rays]\label{T:CM disintegration}
Let $(X,d,\m)$ be a geodesic metric measure space with $\supp\m =X$ and $\sigma$-finite $\m$. Let $u\colon X\rightarrow \mathbb{R}$ be a $1$-Lipschitz function, let~$\{X_{\alpha}\}_{\alpha\in Q}$ be the induced partition of $\mathcal T_u$ via $R_u$, and let $\mathfrak Q\colon \T_u\rightarrow Q$ be the induced quotient map as above.
Then, there exists a unique {strongly consistent disintegration} $\{\m_{\alpha}\}_{\alpha\in Q}$ of $\m|_{\T_u}$ {with respect to} $\mathfrak Q$.
\end{Theorem}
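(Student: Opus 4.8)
The plan is to deduce the statement from the abstract disintegration theorem stated above, after two reductions: replacing the $\sigma$-finite measure $\m$ by an auxiliary probability measure, and feeding in the structural facts about $R_u$ and the quotient space $Q$ recalled in this subsection. If $\m(\T_u)=0$ there is nothing to prove (the statement is vacuous since then $\mathfrak q$ is the zero measure), so assume $\m(\T_u)>0$. Following the $\sigma$-finite extension of the disintegration theorem in \cite[Theorem~3.3]{cav-mon-lapl-18}, and using $\supp\m=X$, choose a Borel function $f\colon X\to(0,\infty)$ with $\int_X f\,\de\m=1$ and set $\tilde\m:=f\m\in\mathcal P(X)$; then $(\T_u,\mathcal B(\T_u))$ is countably generated, since $\mathcal B(X)$ is and $\T_u$ is Borel, and $\tilde\m|_{\T_u}/\tilde\m(\T_u)$ is a probability measure on it.

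I would then apply the abstract disintegration theorem to this probability space, equipped with the partition $\{X_\alpha\}_{\alpha\in Q}$ and the quotient map $\mathfrak Q$. The only hypothesis requiring work is that the quotient $(Q,\mathcal Q)$ be Polish (equivalently, standard Borel), and this is where the structural input recalled above enters: by \cite[Theorem~4.6]{cavom} the restriction of $R_u$ to $\T_u\times\T_u$ is a Borel equivalence relation, and by \cite[Proposition~5.2]{cavom} it admits a Borel section $s$. Identifying $Q$ with $s(\T_u)\subset X$ and, by inner regularity, replacing it with a Borel subset of full $\mathfrak q$-measure as in the text, $Q$ becomes a Borel subset of the Polish space $X$ whose Borel $\sigma$-algebra coincides with the $\sigma$-algebra $\mathcal Q$ induced by $\mathfrak Q$; this places it within the scope of the abstract theorem. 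That theorem then yields a strongly consistent disintegration $\{\tilde\m_\alpha\}_{\alpha\in Q}$ of $\tilde\m|_{\T_u}$ with respect to $\mathfrak Q$, unique up to a $\mathfrak q$-null set (note $\mathfrak q$ and $\mathfrak Q_{\#}(\tilde\m|_{\T_u})$ are mutually absolutely continuous because $f>0$), with each $\tilde\m_\alpha$ a probability measure concentrated on $\mathfrak Q^{-1}(\alpha)\subset X_\alpha$.

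It remains to transfer the disintegration back to $\m$, along the lines of \cite[Theorem~3.3]{cav-mon-lapl-18}: for $\mathfrak q$-a.e.\ $\alpha$ the conditional measure $\m_\alpha$ of $\m|_{\T_u}$ is obtained by reweighting $\tilde\m_\alpha$ by the positive density $f^{-1}$ and renormalising, $\m_\alpha:=\big(\int_X f^{-1}\,\de\tilde\m_\alpha\big)^{-1}f^{-1}\tilde\m_\alpha$, which is again a probability measure concentrated on $X_\alpha$, while the quotient measure becomes $\mathfrak q:=\mathfrak Q_{\#}(\m|_{\T_u})$ as fixed in the text. One then checks, using the consistency condition for $\{\tilde\m_\alpha\}$ together with the fact that $f$ is a fixed positive Borel function, that $\{\m_\alpha\}$ satisfies the consistency condition for $\m|_{\T_u}$ with respect to $\mathfrak Q$, that it is strongly consistent, and that it inherits uniqueness up to a $\mathfrak q$-null set.

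The one genuinely non-elementary ingredient is the structural fact black-boxed in the second paragraph: that $R_u$ restricted to $\T_u\times\T_u$ is a Borel equivalence relation admitting a Borel section, so that its quotient is standard Borel. This is the technical heart of the Cavalletti--Mondino localization, established by Cavalletti in \cite{cavom}; granted it (and the $\sigma$-finite extension of the disintegration theorem from \cite{cav-mon-lapl-18}, whose only delicate point is making the reweighting in the third paragraph well defined $\mathfrak q$-a.e.), everything else is routine manipulation with the abstract disintegration theorem and the change of density $\tilde\m=f\m$.
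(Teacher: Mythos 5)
First, a point of comparison: the paper does not prove this statement at all --- it is quoted verbatim as Cavalletti--Mondino's result \cite[Theorem~3.3]{cav-mon-lapl-18}, with the surrounding text only recalling the structural facts (Borel equivalence relation, measurable section, identification of $Q$ with a Borel subset of $X$) that make it applicable. So there is no internal proof to measure you against; your reconstruction follows exactly the route of the quoted source and of the paper's preamble: pass to an auxiliary probability measure $f\m$, check that the quotient of the nonbranched transport relation is standard Borel via Cavalletti's section from \cite{cavom}, apply the abstract disintegration theorem, and transfer back through the density $f^{-1}$. Your parenthetical ``Polish (equivalently, standard Borel)'' is the right reading: a Borel subset of $X$ need not be Polish in the subspace topology, but Borel isomorphism with a Polish space is all the abstract theorem uses.

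The one step I would not let pass as written is the renormalization $\m_\alpha:=\big(\int f^{-1}\,{\rm d}\tilde{\m}_\alpha\big)^{-1}f^{-1}\tilde{\m}_\alpha$, together with the claim that each $\m_\alpha$ is again a probability measure. Summing the consistency identity over a $\sigma$-finite exhaustion of $\T_u$ only yields $\int_Q\big(\int f^{-1}\,{\rm d}\tilde{\m}_\alpha\big)\,{\rm d}\tilde{\mathfrak q}(\alpha)=\m(\T_u)$, which may be infinite; indeed a single needle $X_\alpha$ can carry infinite $\m$-mass (an unbounded ray with $h_\alpha$ bounded below), in which case your normalizing constant is $+\infty$. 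This is precisely why the ``$\sigma$-finite framework'' the paper adopts relaxes Definition~\ref{D:disintegration}: the conditionals of $\m|_{\T_u}$ are in general only non-negative Radon measures --- compare Theorem~\ref{th:1dlocalisation}, where $\m_\alpha=h_\alpha\mathcal H^1|_{X_\alpha}$ need not be a probability. The correct transfer is the unnormalized $\m_\alpha:=f^{-1}\tilde{\m}_\alpha$, taken consistent with the quotient measure $\mathfrak Q_{\#}(f\m|_{\T_u})$ rather than $\mathfrak Q_{\#}(\m|_{\T_u})$ (the latter may even fail to be $\sigma$-finite on $Q$); with that convention your consistency computation goes through without the spurious normalizing factor, strong consistency and uniqueness up to $\mathfrak q$-null sets are inherited as you say, and the remainder of the argument stands.
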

Now, we assume that $(X,d,\m)$ is an essentially nonbranching $\CDD(K,N)$ space for $K\in \R$ and $N> 1$. Recall the Bishop--Gromov inequality holds and $\m$ is therefore $\sigma$-finite.
The following is \cite[Lemma~3.4]{cav-mon-lapl-18}.
\begin{Lemma}[negligibility of branching points]
\label{somelemma}
Let $(X,d,\m)$ be an essentially nonbranching $\CDD(K,N)$ space for $K\in \R$ and $N\in (1,\infty)$ with $\supp \m=X$.
Then, for any $1$-Lipschitz function $u\colon X\rightarrow \R$, it follows $\m(\T_{u,e}\backslash \T_u)=0$.
\end{Lemma}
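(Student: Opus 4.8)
The plan is to reduce the statement to the known structure of the transport set. The forward branching points $A_+$ and backward branching points $A_-$ are $\sigma$-compact, so it suffices to show that $\m(A_+)=0$ and $\m(A_-)=0$ separately; by the symmetry $u \mapsto -u$ (which exchanges $\Gamma_u$ with $\Gamma_u^{-1}$ and hence $A_+$ with $A_-$), it is enough to prove $\m(A_+)=0$. First I would decompose $A_+$ according to the ``level of branching'': for $x\in A_+$ there exist $z,w \in \Gamma_u(x)$ with $(z,w)\notin R_u$, meaning the two forward rays through $x$ separate. Intersecting with a countable exhaustion by balls and quantifying the separation, one writes $A_+$ as a countable union of Borel pieces on each of which the branching is ``uniform'' — two outgoing directions that have diverged by a definite amount at a definite distance.

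The key geometric input is the essentially nonbranching hypothesis together with $\CDD(K,N)$. On a piece where branching is uniform, I would build two $\m$-absolutely continuous probability measures supported near the ``upstream'' side of these points and transported along the rays: concretely, spread a small amount of mass just behind the branch points and push it forward along $\Gamma_u$. Because the function $u$ increases at unit rate along each ray, optimal transport of such measures is realized by dynamical plans concentrated on these ray-geodesics. If $A_+$ carried positive measure, one could arrange the target measure so that the optimal dynamical coupling is forced to branch on a set of positive $\Pi$-measure — contradicting essential nonbranching. This is essentially the argument of Cavalletti and Mondino; quantitatively one uses that along a single transport ray the conditional densities satisfy the $\CDD(K,N)$ differential inequality (so they are locally bounded above and below away from the endpoints), which lets one produce genuinely $\m$-absolutely continuous competitors rather than degenerate ones.

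The main obstacle, and the part requiring care, is the measurability and covering bookkeeping: showing that $A_+$ can be covered by countably many Borel sets on each of which one has a \emph{uniform} lower bound on the branching separation \emph{and} a uniform two-sided bound on the relevant conditional densities, so that the absolutely continuous competitor measures can actually be constructed with the ODE estimates in force. One must also check that the resulting transport plan is genuinely optimal (not merely supported on $d$-cyclically monotone pairs) — here one invokes that $\Gamma_u$ is $d$-cyclically monotone and that pushing mass forward along rays of a $1$-Lipschitz function is length-minimizing. Once the uniform pieces are isolated, the contradiction with essential nonbranching is immediate, so $\m(A_+)=0$; combining with the symmetric statement $\m(A_-)=0$ gives $\m(\T_{u,e}\setminus \T_u)=\m(A_+\cup A_-)=0$, as claimed. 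Since this is precisely \cite[Lemma~3.4]{cav-mon-lapl-18}, I would in fact simply cite that reference rather than reproduce the argument; but the sketch above is the route one would take to reprove it.
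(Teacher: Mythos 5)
Your proposal ends by doing exactly what the paper does: the paper offers no proof of this lemma and simply states that it is \cite[Lemma~3.4]{cav-mon-lapl-18}, so citing that reference is the same approach. Your preceding sketch of the Cavalletti--Mondino argument (reduce to $A_+$ by the symmetry $u\mapsto -u$, decompose into pieces of uniform branching, and derive a contradiction with essential nonbranching via absolutely continuous competitors built along the rays) is a faithful account of how the cited proof goes, but it is supplementary to, not divergent from, the paper's treatment.
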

The initial and final points are defined by
\begin{gather*}
\mathfrak a := \big\{x\in \T_{u,e}\colon \Gamma^{-1}_u(x)=\{x\}\big\}, \qquad
\mathfrak b := \{x\in \T_{u,e}\colon \Gamma_u(x)=\{x\} \}.
\end{gather*}
In \cite[Theorem~7.10]{cavmil} it was proved that under the assumption of the previous lemma there exists $\hat Q\subset Q$ with $\mathfrak q(Q\backslash \hat Q)=0$ such that for $\alpha\in \hat Q$ one has $\overline{X_\alpha}\backslash \T_u\subset \mathfrak a\cup \mathfrak b$. In particular, for $\alpha\in \hat Q$ we have
\begin{gather}\label{somehow}
R_u(x)=\overline{X_\alpha}\supset X_\alpha \supset (R_u(x))^{\circ} \qquad \forall\, x\in \mathfrak Q^{-1}(\alpha)\subset \T_u,
\end{gather}
where $(R_u(x))^\circ$ denotes the relative interior of the closed set $R_u(x)$.

The following is \cite[Theorem 3.5]{cav-mon-lapl-18}.
\begin{Theorem}[conditional measures inherit curvature-dimension bounds]\label{th:1dlocalisation}
Let $(X,d,\m)$ be an essentially nonbranching $\CDD(K,N)$ space with $\supp\m=X$, $K\in \R$ and $N\in (1,\infty)$.
For any $1$-Lipschitz function $u\colon X\rightarrow \R$, let $\{\m_{\alpha}\}_{\alpha\in Q}$ denoted the disintegration of $\m|_{\T_u}$
from Theo\-rem~{\rm \ref{T:CM disintegration}} which is strongly consistent with the quotient map $\mathfrak Q\colon \T_u \rightarrow Q$.
Then there exists $\tilde Q$ such that $\mathfrak q(Q\backslash \tilde Q)=0$ and $\forall\, \alpha\in \tilde Q$, $\m_{\alpha}$ is a Radon measure with $\dm_{\alpha}=h_{\alpha}\,{\rm d}\mathcal{H}^1|_{X_{\alpha}}$ and $(X_{\alpha}, d, \m_{\alpha})$ verifies the condition $\CDD(K,N)$.
More precisely, for all $\alpha\in \tilde Q$ it follows that
\begin{gather}\label{kuconcave}
h_{\alpha}(\gamma_t)^{\frac{1}{N-1}}\geq \sigma_{K/N-1}^{(1-t)}(|\dot\gamma|)h_{\alpha}(\gamma_0)^{\frac{1}{N-1}}+\sigma_{K/N-1}^{(t)}(|\dot\gamma|)h_{\alpha}(\gamma_1)^{\frac{1}{N-1}}
\end{gather}
for every affine map $\gamma\colon [0,1]\rightarrow (a(X_\alpha),b(X_\alpha))$.
\end{Theorem}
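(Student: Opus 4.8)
The plan is to follow the localization strategy of Cavalletti and Mondino, reducing the global $\CDD(K,N)$ inequality \eqref{cdcondition} to a one-dimensional statement along the transport rays $X_\alpha$. I would begin with the \emph{ray map} $g(\alpha,t):=\gamma_\alpha(t)$, defined for $\alpha\in Q$ and $t\in I_\alpha$. Using the measurable section $s$ together with the $\sigma$-compactness of $\T_{u,e}$ and of $A_\pm$ recalled above, one checks that $g$ is Borel and, via Lemma~\ref{somelemma} and \eqref{somehow}, that it is a Borel bijection onto a subset of full $\m$-measure inside $\T_u$. Pushing the disintegration $\{\m_\alpha\}$ of Theorem~\ref{T:CM disintegration} back through $g$ then presents each conditional measure as a one-dimensional measure on the parameter interval $I_\alpha$, which is the form in which the one-dimensional $\CDD$ analysis can be carried out.

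The first substantive step is to prove that for $\mathfrak q$-a.e.\ $\alpha$ the conditional measure is Radon and absolutely continuous, $\m_\alpha\ll\mathcal H^1|_{X_\alpha}$, thereby producing the density $h_\alpha$. The main tool is the Bishop--Gromov volume estimate, valid on any $\CDD(K,N)$ space (which is proper by Remark~\ref{R:Heine-Borel}): for a subsegment of a ray together with a small transverse neighbourhood, transporting the normalized restriction of $\m$ on the resulting ``tube'' towards an endpoint of the ray and invoking \eqref{cdcondition} controls the $\m$-measure of the shrunken tube in terms of the length parameter, which rules out atoms and singular parts in the conditional measures. This is exactly the mechanism by which Cavalletti and Mondino show that needles are non-degenerate $\CDD$ intervals; here the essentially nonbranching hypothesis — forcing optimal dynamical plans to concentrate on nonbranching geodesics, hence to stay inside $\T_u$ — is what keeps the competitor transport inside the partition.

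For the concavity estimate \eqref{kuconcave} I would localize transversally. Fix $\alpha_0$ and two interior parameters $t_0<t_1$ of $I_{\alpha_0}$; for a small Borel set $A\ni\alpha_0$ in $Q$ and a width $\eta>0$, take $\mu_0$ and $\mu_1$ to be the normalized restrictions of $\m$ to the transverse slabs $g\big(A\times(t_0-\eta,t_0+\eta)\big)$ and $g\big(A\times(t_1-\eta,t_1+\eta)\big)$. Since these measures are supported inside the transport set and $\Gamma_u$ is $d$-cyclically monotone, the unique optimal dynamical coupling between $\mu_0$ and $\mu_1$ moves mass within rays, sending $g(\alpha,s)$ to $g(\alpha,s+(t_1-t_0))$ up to an $O(\eta)$ error; feeding this plan into \eqref{cdcondition}, then disintegrating in $\alpha$ and applying Fubini, one extracts \eqref{kuconcave} for $\m_{\alpha_0}$-a.e.\ pair of points upon letting $A\downarrow\{\alpha_0\}$ and $\eta\downarrow0$. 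Running this over a countable dense family of rational affine maps and invoking a Lebesgue-point argument upgrades the conclusion to the full statement: for $\mathfrak q$-a.e.\ $\alpha$, the density $h_\alpha$ satisfies \eqref{kuconcave} for every affine $\gamma\colon[0,1]\to(a(X_\alpha),b(X_\alpha))$, i.e., $(X_\alpha,d,\m_\alpha)$ verifies $\CDD(K,N)$.

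The main obstacle is the measure-theoretic bookkeeping in the localization step: one must verify carefully that the optimal transport between the slab measures is genuinely ``translation along rays'' — this is where maximality of the transport rays, closedness of $\Gamma_u$, $d$-cyclical monotonicity and essential nonbranching all enter — and that the $\CDD(K,N)$ inequality passes through the disintegration in $\alpha$ without loss of control over null sets. The absolute-continuity step is the other delicate point, since $\m_\alpha\ll\mathcal H^1$ is far from automatic and genuinely uses the curvature-dimension bound. As all of this is carried out in detail in \cite[Theorem~3.5]{cav-mon-lapl-18}, I would cite that reference for the complete argument.
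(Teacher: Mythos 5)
The paper offers no proof of this statement at all: it is quoted verbatim as \cite[Theorem~3.5]{cav-mon-lapl-18}, and your sketch is an accurate outline of exactly the Cavalletti--Mondino localization argument (ray map, absolute continuity of the conditionals via translation along rays, and transport between transverse slabs to localize \eqref{cdcondition}) before deferring to that same reference. So your proposal is correct and takes essentially the same route as the paper, namely importing the result from \cite{cav-mon-lapl-18}.
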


\begin{Remark}[semiconcave densities on needles] \label{rem:kuconcave} The property
\eqref{kuconcave} yields that $h_{\alpha}$ is locally Lipschitz continuous on $(a(X_\alpha),b(X_\alpha))$ \cite[Section 4]{cavmon},
and that $h_{\alpha}\colon (a(X_\alpha), b(X_\alpha)) \rightarrow (0,\infty)$ satisfies
\begin{gather}\label{vvv}
\frac{{\rm d}^2}{{\rm d}r^2}h_{\alpha}^{\frac{1}{N-1}}+ \frac{K}{N-1}h_{\alpha}^{\frac{1}{N-1}}\leq 0\qquad \mbox{on $(a(X_{\alpha}),b(X_{\alpha}))$} \mbox{ in distributional sense;}
\end{gather}
in particular, $h_{\alpha}^{\frac{1}{N-1}}$ is semiconcave on $(a(X_{\alpha}),b(X_{\alpha}))$, hence admits left and right derivatives at each point.

Conversely, it is well-known that any function $h_\alpha \ge 0$ satisfying \eqref{vvv} can be chosen to be continuous up to its endpoints and that this extension then satisfies~\eqref{kuconcave}; see also Remark~\ref{rem:extden}.
\end{Remark}
\begin{Remark}\label{rem_mer}
We observe the following from the proof of \cite[Theorem 4.2]{cavmon}: when $\Omega\subset X$ satisfies the restricted condition $\CDD_{r}(K',N)$ and the $1$-Lipschitz function $u=d_{\Omega^c}$ is chosen to be the distance function to $\Omega^c$, then $h_\gamma$ satisfies~\eqref{kuconcave} with $K'$ replacing~$K$.

Let us be a little bit more precise here. For the proof of Theorem~4.2 in~\cite{cavmon} the authors construct $L^2$-Wasserstein geodesics between $\m$-absolutely continuous probability measures such that the corresponding optimal dynamical plans are supported on transport geodesics of the $1$-Lipschitz function $\phi$ that appears in the statement of \cite[Theorem~4.2]{cavmon}.

In our situation, when $\phi$ is actually $d_{\Omega^c}$, all transport geodesics of positive length are inside of~$\Omega$. Hence, the $L^2$-Wasserstein geodesics constructed by Cavalletti and Mondino are concentrated in $\Omega$ and the restricted condition~$\CDD_r(K', N)$ applies. Then we can follow verbatim the proof of Theorem~4.2 in~\cite{cavmon}.
\end{Remark}

\begin{Remark}[extended densities]\label{rem:extden}
The Bishop--Gromov volume monotonicity implies that $h_\alpha$ can always be extended to continuous function on $[a(X_\alpha),b(X_\alpha)]$ \cite[Remark~2.14]{cav-mon-lapl-18}. Then~\eqref{kuconcave} holds for every affine map $\gamma\colon [0,1]\rightarrow [a(X_\alpha),b(X_\alpha)]$.
We set $\left(h_{\alpha}\circ \gamma_{\alpha}(r)\right)\cdot 1_{[a(X_{\alpha}),b(X_{\alpha})]}=h_{\alpha}(r)$
and consider $h_{\alpha}$ as function that is defined everywhere on $\R$.
We also consider $\frac{{\rm d}}{{\rm d}r}h_{\alpha}\colon X_{\alpha}\rightarrow \R$ defined a.e.\ via
$\frac{{\rm d}}{{\rm d}r}(h_{\alpha}\circ\gamma_{\alpha})(r)=:\frac{{\rm d}}{{\rm d}r}h_{\alpha}(r)$.

It is standard knowledge that the derivatives from the right and from the left
\begin{gather*}
\frac{{\rm d}^{+}}{{\rm d}r} h_\alpha(r)= \lim_{t\downarrow 0} \frac{h_{\alpha}(r+t)-h_\alpha(r)}{t},\qquad
\frac{{\rm d}^-}{{\rm d}r}h_\alpha(r)= \lim_{t\uparrow 0}\frac{h_{\alpha}(r+t)-h_\alpha(r)}{t}
 \end{gather*}
exist for $r\in [a(X_\alpha), b(X_\alpha))$ and $r\in (a(X_\alpha), b(X_\alpha)]$ respectively. Moreover, we set $\frac{{\rm d}^{+}}{{\rm d}r}h_\alpha= - \infty$ in $b(X_\alpha)$ and $\frac{{\rm d}^-}{{\rm d}r}h_\alpha=\infty$ in $a(X_\alpha)$.
\end{Remark}
\begin{Remark}[generic geodesics]\label{def:dagger}\label{dagger}
In the following we set $Q^\dagger:= \tilde Q \cap \hat Q$, where $ \tilde Q$ and $\hat Q$ index the transport rays identified between Lemma~\ref{somelemma}
and Theorem~\ref{th:1dlocalisation}.
 Then, $\mathfrak q\big(Q\backslash Q^\dagger\big)=0$ and for every $\alpha \in Q^\dagger$ the inequality \eqref{kuconcave} and \eqref{somehow} hold. We also set $\mathfrak Q^{-1}(Q^{\dagger})=:\T_u^{\dagger}\subset \T_u$ and $\bigcup_{x\in \T_u^{\dagger}} R_u(x)=:\T_{u,e}^{\dagger}\subset \T_{u,e}$.
\end{Remark}

\subsection{Generalized mean curvature}\label{subsec:meancurvature}

Let $(X,d,\m)$ be a metric measure space as in Theorem~\ref{th:1dlocalisation}.
Let $\Omega\subset X$ be a closed subset, and let $S=\partial \Omega$ such that $\m(S)=0$. The function $d_\Omega\colon X\rightarrow \mathbb{R}$ is given by
\begin{gather*}
d_\Omega(x) := \inf_{y\in {\Omega}} d(x,y).
\end{gather*}
The signed distance function $d_S$ for $S$ is given by
\begin{gather*}
d_S:=d_{{\Omega}}- d_{{\Omega^{c}}}\colon \ X\rightarrow \mathbb{R}.
\end{gather*}
It follows that $d_S(x)=0$ if and only if $x\in S$, and $d_S\leq 0$ if $x\in \Omega$ and $d_S\geq 0$ if $x\in \Omega^c$.
It is clear that $d_S|_{\Omega}= -d_{\Omega^{c}}$ and $d_S|_{\Omega^c}=d_\Omega$.
Setting $v=d_S$ we can also write
\[
d_S(x)= \sign(v(x))d(\{v=0\},x), \qquad \forall\, x\in X.
\]
Since $X$ is proper, $d_S$ is $1$-Lipschitz \cite[Remarks~8.4 and~8.5]{cav-mon-lapl-18}. Let $\Omega^\circ$ denote the topological interior of $\Omega$.

Let $\mathcal{T}_{d_S,e}$ be the transport set of $d_S$ with end- and branching points. We have $\mathcal{T}_{d_S, e}\supset X\backslash S$. In particular, we have $\m(X\backslash \T_{d_S})=0$ by Lemma \ref{somelemma} and $\m(S)=0$.
Therefore, by Theorem \ref{th:1dlocalisation} the $1$-Lipschitz function $d_S$ induces a partition $\left\{X_{\alpha}\right\}_{\alpha\in Q}$ of $(X,d,\m)$ up to a set of measure zero for a measurable quotient space $Q$, and a disintegration $\{m_{\alpha}\}_{\alpha\in Q}$ that is strongly consistent with the partition. The subset $X_{\alpha}$, $\alpha \in Q$, is the image of a distance preserving map $\gamma_{\alpha}\colon I_\alpha\rightarrow X$ for an interval $I_\alpha\subset \mathbb R$ with $\overline{I}_\alpha= [a(X_\alpha), b(X_\alpha)]\ni 0$.

We consider $Q^\dagger\subset Q$ as in Remark~\ref{dagger}. One has the representation
\begin{gather*}
\m(B)=\int_{Q} \m_{\alpha}(B) \,{\rm d}\mathfrak q(\alpha)=\int_{Q^{\dagger}} \int_{\gamma_{\alpha}^{-1}(B)} h_{\alpha}(r) \,{\rm d}r \, {\rm d}\mathfrak q(\alpha)
\end{gather*}
for all Borel $B \subset X$.

For any transport ray $X_{\alpha}$ with $\alpha\in Q^{\dagger}$, it follows that $d_S(\gamma_\alpha(b(X_{\alpha})))\geq 0$ and
$d_S(\gamma_\alpha(a(X_{\alpha})))\allowbreak \leq 0$ (for instance compare with \cite[Remark~4.12]{cav-mon-lapl-18}).

\begin{Remark}[measurability and zero-level selection]\label{R:zero-level selection}
It is easy to see that $A:=\mathfrak Q^{-1}(\mathfrak Q(S\cap \T_{d_S}))\subset \T_{d_S}$ is a measurable subset.
The {\it reach}
$A\subset \T_{d_S}$ is defined such that $\forall\, \alpha \in \mathfrak Q(A)$ we have $X_\alpha\cap S =\{\gamma(t_\alpha)\}\neq \varnothing$ for a unique $t_\alpha\in I_{\alpha}$. Then, the map ${\hat s}\colon \gamma(t)\in A \mapsto \gamma(t_\alpha)\in S\cap \T_{d_S}$ is a measurable section (i.e., selection) on $A\subset \T_{d_S}$, and one can identify the measurable set $\mathfrak Q(A)\subset Q$ with $A\cap S$ and can parameterize $\gamma_\alpha$ such that $t_\alpha=0$.

This measurable section $\hat s$ on $A$ is fixed for the rest of the paper. The reach $A$ is the union of all disjoint needles that intersect with $\partial \Omega$ -- possibly in $a(X_\alpha)$ (or in $b(X_\alpha)$) provided $a(X_\alpha)$ (respectively $b(X_\alpha)$) belongs to $I_\alpha$. We shall also define the {\it inner reach} $B_{\rm in}$ as the union of all needles disjoint from
$\overline{\Omega^c}$ and the {\it outer reach} $B_{\rm out}$ as the union of all needles disjoint from $\Omega$. The superscript $\dagger$ will be used indicate intersection with $\mathcal T^{\dagger}_{d_S}$. Thus
\begin{gather*}
{A}\cap \mathfrak \T_{d_S}^{\dagger}=: A^{\dagger} \qquad \mbox{and}\qquad \bigcup_{x\in A^{\dagger}}R_{d_S}(x)=:A_e^{\dagger}.
\end{gather*}
The sets $A^\dagger$ and $A_e^\dagger$ are measurable, and also
\begin{gather}\label{Binout}
B_{\rm in}^\dagger:=\Omega^\circ \cap \T^\dagger_{d_S} \backslash A^\dagger\subset \T^\dagger_{d_S} \qquad \text{and} \qquad B^\dagger_{\rm out}:=\Omega^c \cap \T^\dagger_{d_S} \backslash A^\dagger\subset \T_{d_S}
\end{gather}
as well as $\bigcup_{x\in B^\dagger_{\rm out}} R_{d_S}(x)=:B^\dagger_{{\rm out}, e}$ and $\bigcup_{x\in B^\dagger_{\rm in}}R_{d_S}(x)=:B^\dagger_{{\rm in},e}$
are measurable.

The map $\alpha \in \mathfrak Q(A^\dagger)\mapsto h_\alpha(0)\in \R$ is measurable (see \cite[Proposition~10.4]{cavmil}).
\end{Remark}

\begin{Definition}[surface measures]\label{def:surfacemeasure}
Taking $S=\partial \Omega$ as above, we use the disintegration of Remark~\ref{R:zero-level selection} to define the {\it surface measure} $\m_S$ via
\[
\int\phi(x)\dm_S(x):=\int_{\mathfrak Q(A^{\dagger})} \phi(\gamma_\alpha(0)) h_{\alpha}(0) \,{\rm d}\mathfrak q(\alpha)
\]
for any bounded and continuous function $\phi\colon X\rightarrow \R$.
That is, $\m_S$ is the push-forward of the measure {$h_\alpha(0) \,{\rm d}\mathfrak q(\alpha)|_{\mathfrak Q(A^{\dagger})}$} under the map $\hat s\colon \gamma\in \mathfrak Q(A^{\dagger})\mapsto \gamma(0) \in S$.
\end{Definition}

 \begin{Remark}[surface measure via ray maps] \label{rem:surmea}Let us briefly explain the previous definition from the viewpoint of the ray map \cite[Definition~3.6]{cavmon}
or its precursor from the smooth setting~\cite{FeldmanMcCann02}.
For the definition we fix a measurable extension $ s_0\colon \mathcal T_{d_S}\rightarrow \mathcal T_{d_S}$ such that $ s_0|_{A^{\dagger}}=\hat s$ as in Remark~\ref{R:zero-level selection}. As was explained in Section~\ref{subsec:1D} such a {section} allows us to identify the quotient space~$Q$ with a Borel subset in~$X$ up to a set of $\mathfrak q$-measure~$0$.
 Following \cite[Definition~3.6]{cavmon} we define the ray map
 \begin{gather*}
 g\colon \ \mathcal V\subset \mathfrak Q(A\cup B_{\rm in})\times (-\infty,0] \rightarrow X
 \end{gather*}
into $\Omega$ and its domain $\mathcal V$ via its graph
 \begin{gather*}
 \mbox{graph}(g) =\{(\alpha,t,x)\in \mathfrak Q(A)\times\R\times \Omega\colon x\in X_\alpha, -d(x,\alpha)=t\}\\
\hphantom{\mbox{graph}(g) =}{} \cup \{(\alpha,t,x)\in \mathfrak Q(B_{\rm in}) \times \R\times \Omega\colon x\in X_\alpha, -d(x, \gamma_\alpha(b(X_\alpha)))=t\}.
 \end{gather*}
 This is exactly the ray map as in \cite{cavmon} up to a reparametrisation for $\alpha\in \mathfrak Q(B_{\rm in})$.
Note that $g(\alpha,0)=\gamma_\alpha(0)=\alpha$ and $g(\alpha,t)=\gamma_\alpha(t)$ if $\alpha\in \mathfrak Q(A)$ but $\gamma_\alpha(t+d(b(X_\alpha),\alpha))=g(\alpha,t)$ for $\alpha\in \mathfrak Q(B_{\rm in})$.
 Then the disintegration for a non-negative $\phi\in C_b(\Omega)$ takes the form
 \begin{gather*}
 \int_{\Omega} \phi\thinspace \dm = \int_Q \int_{\mathcal V_\alpha} \phi\circ g(\alpha,t) h_\alpha\circ g(\alpha,t) \,{\rm d} \mathcal L^1{ (t)} \,{\rm d}\mathfrak q(\alpha),
 \end{gather*}
where $\mathcal V_\alpha = P_2(\mathcal V \cap \{\alpha\}\times \R)\subset \R$ and $P_2(\alpha,t)=t$. With Fubini's theorem the right hand side is
 \begin{gather*}
 \int_{\mathcal V} \phi \circ g(\alpha,t) h_\alpha \circ g(\alpha,t) \,{\rm d}\big(\mathfrak q\otimes \mathcal L^1\big)(\alpha,t)= \iint_{\mathcal V_t} \phi \circ g(\alpha, t) h_\alpha\circ g(\alpha, t) \,{\rm d}\mathfrak q(\alpha) \,{\rm d}t,
 \end{gather*}
 where $\mathcal V_t= P_1(\mathcal V \cap Q\times \{t\})\subset Q$ and $P_1(\alpha,t)=\alpha$. In particular, for $\mathcal L^1$-a.e.\ $t\in \R$ the map $\alpha \mapsto h_{\alpha}\circ g(\alpha,t)$ is measurable.
 Hence, for $\mathcal L^1$-a.e.\ $t\in \R$ we define ${\rm d}\mathfrak p_t (\alpha)= h_\alpha \circ g(\alpha,t) \,{\rm d} \mathfrak q|_{\mathcal V_t}(\alpha)$ on $Q$. Then disintegration takes the form
 \begin{align*}
 \m|_{\Omega}=\m|_{\Omega\cap \mathcal T_{d_S}}= \int (g(\cdot,t)_{\#} \mathfrak p_t) \,{\rm d}t.
 \end{align*}
Now, we can consider the push-forward $\m_{S_0}= g(\cdot,0)_{\#} \mathfrak p_0$. When $B_{\rm in}\neq \varnothing$ then $\m_{S_0}$ may be concentrated on a~larger set than $\m_{S}$ but by construction one recognizes that $\m_{S}=\m_{S_0}|_{A^{\dagger}}$.
\end{Remark}

\begin{Definition}[inner mean curvature]\label{def:meancurvature}
Set $S=\partial \Omega$ and let $\{X_{\alpha}\}_{\alpha\in Q}$ be the disintegration induced by $u:=d_S$.
Recalling \eqref{Binout}, we say that $S$ has {\em finite inner} (respectively {\em outer}) {\em curvature} if $\m\big(B^\dagger_{\rm in}\big)=0$ (respectively $\m\big(B^\dagger_{\rm out}\big)=0$),
and $S$ has {\em finite curvature} if $\m\big(B^\dagger_{\rm out}\cup B^\dagger_{\rm in}\big)=0$.
If~$S$ has finite inner curvature we define the {inner mean curvature} of $S$ {$\m_S$-almost everywhere} as
\begin{gather*}
p\in S\mapsto
H_S^-(p):= \begin{cases}
\dfrac{{\rm d}^-}{{\rm d}r}|_{r=0}\log h_\alpha\circ \gamma_\alpha(r)
& \mbox{if }
\ p=\gamma_\alpha(0)\in S\cap A^{\dagger},\\
\infty& \mbox{if } \ p\in B^\dagger_{{\rm out}, e}\cap S,
\end{cases}
\end{gather*}
where we set
$\frac{{\rm d}^-}{{\rm d}r} \log h_{\alpha}(\gamma_\alpha(0))= -\infty$ if $h_\alpha(\gamma_\alpha(0))=0$.
\end{Definition}
\begin{Remark}[(sign) conventions]
We point out two differences in comparison to~\cite{kettererHK}:
For the definition of $A^{\dagger}$ we do not remove points that lie in $\mathfrak a$ and $\mathfrak b$, and we switched signs in the definition of inner mean curvature. The latter allows us to work with mean curvature bounded below instead of bounded above.
\end{Remark}

\begin{Remark}[smooth case]
Let us briefly address the case of a Riemannian manifold $(M,g)$ equipped with a measure of the form $\dm=\Psi{\rm d}\!\vol_g$ for $0<\Psi\in C^{\infty}(M)$ and $\Omega$ with a boundary~$S$ which is a smooth compact submanifold.
For every $x\in S$ there exist $a_x<0$ and $b_x>0$ such that $\gamma_x(r)=\exp_x(r\nabla d_S(x))$ is a minimal geodesic on $(a_x,b_x)\subset \R$, and we define
\begin{gather*}
\mathcal U= \{(x,r)\in S\times \R\colon r\in (a_x,b_x) \}\subset S\times \R
\end{gather*}
and the map $T\colon \mathcal U \rightarrow M$ via $T(x,r)=\gamma_x(r)$. The map $T$ is a diffeomorphism on $\mathcal U$, with $\vol_g(M\backslash T(\mathcal U))=0$ and the integral
of $\phi \in C(M)$ can be computed effectively by the following formula:
\begin{gather*}
\int \phi \dm
 = \int_S \int_{a_x}^{b_x} \phi \circ T(x,r) \det DT_{(x,r)}|_{T_xS} \Psi\circ T(x,r) \,{\rm d}r\, {\rm d}\!\vol_S(x),
\end{gather*}
where $\vol_S$ is the induced Riemannian surface measure on~$S$.
By comparison with the needle technique disintegration it is not difficult to see that $\dm_S=\Psi \,{\rm d}\!\vol_S$. Moreover, the open needles for $d_S$ are the geodesics $\gamma_x\colon (a_x,b_x)\rightarrow M$ and the densities $h_x(r)$ are given by $c(x)\det DT_{(x,r)} \Psi\circ T(x,r)$ for some normalization constant $c(x)$, $x\in S$.

A direct computation then yields
\begin{gather*}
\frac{{\rm d}}{{\rm d}r} \log h_x(0) = H_S(x) + \langle \nabla d_S(x),\nabla \log \Psi \rangle(x), \qquad \forall\, x\in S,
\end{gather*}
where $H_S$ is the standard mean curvature, i.e., the trace of the second fundamental form of~$S$.
\end{Remark}
\begin{Definition}[exterior ball condition]
Let $\Omega\subset X$ and $\partial \Omega =S$. Then $S$ satisfies the exterior ball condition if for all $x\in S$ there exists $r_{x}>0$ and $p_x\in \Omega^c$ such that $d(x,p_x)=r_{x}$ and $B_{r_{x}}(p_x)\subset \Omega^c$.
We say $S$ satisfies a uniform exterior ball condition if there exists $\delta>0$ such that $r_x\geq \delta$ for all $x\in S$.
\end{Definition}

\begin{Lemma}[exterior ball criterion for finite inner curvature] \label{lem:ballcondition}
Let $\Omega \subset X$. If $S=\partial \Omega$ satisfies the exterior ball condition, then $S$ has finite inner curvature.
\end{Lemma}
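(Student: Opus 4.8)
\emph{Plan.} First I would unravel the definition: by Definition~\ref{def:meancurvature}, $S=\partial\Omega$ has finite inner curvature exactly when $\m(B^\dagger_{\rm in})=0$, where $B^\dagger_{\rm in}=\Omega^\circ\cap\T^\dagger_{d_S}\setminus A^\dagger$ is the union of those needles (transport rays) of the $1$-Lipschitz function $u:=d_S=d_\Omega-d_{\Omega^c}$ that lie inside $\Omega^\circ$ and are disjoint from $S$. The plan is to show that under the exterior ball condition there are \emph{no such inner needles at all}, so that $B^\dagger_{\rm in}=\varnothing$. I would begin by noting that $B^\dagger_{\rm in}$ is a union of \emph{entire} equivalence classes: if $X_\alpha$ ($\alpha\in Q^\dagger$) meets $B^\dagger_{\rm in}$ then $X_\alpha\cap S=\varnothing$, and since $X_\alpha$ is connected while $X\setminus S=\Omega^\circ\sqcup\Omega^c$ is a disjoint union of open sets, $X_\alpha\subset\Omega^\circ$ and thus $X_\alpha\subset B^\dagger_{\rm in}$. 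Hence emptiness of the index set already gives the lemma.

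Now suppose for contradiction that $\alpha\in Q^\dagger$ has $X_\alpha\subset\Omega^\circ$ and $X_\alpha\cap S=\varnothing$. In the transport-ray parametrization $d_S(\gamma_\alpha(t))$ grows at unit rate; since $d_S\le 0$ on $\Omega\supset X_\alpha$, the right endpoint $b:=b(X_\alpha)$ is finite, and I set $q:=\gamma_\alpha(b)\in\overline{X_\alpha}$. Closedness of $\Omega$ gives $q\in\Omega$; and since $\alpha\in\hat Q$, the identity~\eqref{somehow} (together with the strict monotonicity of $d_S$ along the ray, which places its maximum at $b$) forces $q$ to be a final point, $\Gamma_{d_S}(q)=\{q\}$. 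If $q$ were interior I would derive a contradiction: by properness (Remark~\ref{R:Heine-Borel}) pick $q'\in\overline{\Omega^c}$ realizing $d_{\Omega^c}(q)>0$; along a unit-speed geodesic from $q$ to $q'$, $d_{\Omega^c}$ decreases at unit rate on its open part, so $d_S$ increases at unit rate there, producing points $y\neq q$ with $(q,y)\in\Gamma_{d_S}$ -- contradicting $\Gamma_{d_S}(q)=\{q\}$. Therefore $q\in\Omega\setminus\Omega^\circ=S$.

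The key step is to feed $q\in S$ into the exterior ball condition: choose $p\in\Omega^c$ and $r:=r_q>0$ with $d(q,p)=r$ and $B_r(p)\subset\Omega^c$. Then $\overline{B_r(p)}\subset\overline{\Omega^c}$ and $q\in S\subset\Omega$ squeeze $d_\Omega(p)$ between $r=d(p,q)$ and $d(p,\Omega)\geq r$, so $d_\Omega(p)=r$ and $d_S(p)=r>0$. Fix any $z\in X_\alpha$; from $z,q\in\overline{X_\alpha}$ with unit-rate growth of $d_S$ one gets $(z,q)\in\Gamma_{d_S}$, hence $d(z,q)=d_{\Omega^c}(z)$, i.e.\ $q$ is a nearest point of $\overline{\Omega^c}$ to $z$. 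A geodesic from $z$ to $p$ (here $d(z,p)>r$ since $z\in\Omega^\circ$ while $\overline{B_r(p)}\subset\overline{\Omega^c}$) meets $\{d(\cdot,p)=r\}$ at a point $y_0\in\overline{\Omega^c}$ with $d(z,y_0)=d(z,p)-r$, so $d(z,p)-r\geq d_{\Omega^c}(z)=d(z,q)$; with the triangle inequality, $d(z,p)=d(z,q)+r$. Consequently $d_S(p)-d_S(z)=r+d(z,q)=d(z,p)$: a concatenation of a geodesic $z\to q$ with a geodesic $q\to p$ is a $d_S$-transport geodesic, all of whose points are pairwise $\Gamma_{d_S}$-related, so $p\in\Gamma_{d_S}(z)\subset R_{d_S}(z)=\overline{X_\alpha}$ by~\eqref{somehow}. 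But $d_S\le d_S(q)=0$ on all of $\overline{X_\alpha}=\gamma_\alpha([a(X_\alpha),b])$, contradicting $d_S(p)=r>0$. Hence no inner needle exists, $B^\dagger_{\rm in}=\varnothing$, and $S$ has finite inner curvature.

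The step I expect to be the main obstacle is the geometric bookkeeping around $q$: establishing cleanly that the forward endpoint of an inner needle is a genuine final point that must lie on $S$ (the argument via~\eqref{somehow}, with a possibly non-compact parametrizing interval $\overline I_\alpha$), and that the exterior ball really does pin the transport ray so that it runs from $z$ straight through $q$ to $p$ with unit $d_S$-slope. The conceptual point that makes the proof go through -- and that lets me avoid any delicate analysis of branching at $q$ -- is that the exterior ball forces a point $p$ of \emph{strictly positive} signed distance into the closed transport ray $\overline{X_\alpha}$, on which $d_S\leq 0$; that is the contradiction.
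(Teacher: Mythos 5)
Your proof is correct and follows essentially the same route as the paper's: the exterior ball at the forward endpoint $q\in S$ of a putative inner needle produces a point $p$ with $d_S(p)=r>0$ that is $\Gamma_{d_S}$-related to the needle, hence lies in $R_{d_S}(z)=\overline{X_{\alpha}}$ by \eqref{somehow}, contradicting $d_S\le 0$ there. Your write-up is in fact slightly more complete, since you explicitly rule out the forward endpoint lying in $\Omega^{\circ}$ (via the final-point property and the unit-slope growth of $d_S$ toward $\overline{\Omega^c}$), a case the paper's proof passes over by reducing at once to $b(X_{\alpha'})=0$.
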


\begin{proof}Let $S$ satisfy the exterior ball condition. Then for every $x\in S$ there exists a point $p_x\in \Omega^c$ and a geodesic $\gamma_x\colon [0,r_x]\rightarrow \Omega^c$ from $x$ to $p_x$ such that $L(\gamma_x)=d(x,p_x)=r_x$ and $d(p_x, y)>r_x$ for any $y\in S\backslash \{x\}$. Hence, $d_S(p_x)=r_x$ and the image of $\gamma_x$ is contained in $R_{d_S}(x)$.

Recall the definition of $Q^\dagger\subset Q$ (Remark~\ref{def:dagger}). Since $Q^\dagger$ has full $\mathfrak q$-measure, it is enough to show that for all $\alpha\in Q^{\dagger}$ the endpoint $b(X_\alpha)>0$. Then also $B_{\rm in}^{\dagger}=\varnothing$. Assume the contrary.
Let $\alpha'\in Q^\dagger$ and let $\gamma':=\gamma_{\alpha'}$ be the corresponding geodesic such that $b(X_{\alpha'})=0$, that is $\mbox{Im}(\gamma'|_{(a(X_{\alpha'}),0)})\subset \Omega$. The concatenation $\gamma''\colon (a(X_{\alpha'}), r_x)\rightarrow X$ of $\gamma'$ with $\gamma_{x}$ for $x=\gamma'(0)$ satisfies $\gamma''(0)=x$ and
\begin{gather}\label{id:rel}
d(\gamma''(s),\gamma''(t))\leq d(\gamma''(s),x)+ d(x, \gamma''(t))= d_S(\gamma''(t))-d_S(\gamma''(s)) \le d(\gamma''(s),\gamma''(t))
\end{gather}
for $s\in (a(X_{\alpha'}),0]$ and $t\in [0,r_x)$.

Thus the inequalities in \eqref{id:rel} are actually equalities. Hence, $\mbox{Im}(\gamma'')\subset R_{d_S}(\gamma''(s))$, the points that are {\it $R_{d_S}$-related} to $\gamma''(s)$. These are exactly the points $y$ that satisfy~\eqref{id:rel} with $\gamma''(t)$ replaced with $y$. But this contradicts the requirement $\overline{X_{\alpha'}}= R_{d_S}(\gamma''(s))$ from the definition of $Q^\dagger$.
\end{proof}

\begin{Remark}[partial converse]
As was pointed out to us by one of the referees
the converse implication in Lemma \ref{lem:ballcondition} holds in the following sense.
If for $x\in S\cap A$ then there exists $p\in X$ that either belongs to $\Omega^c$ or $\Omega^\circ$ such that $B_r(p)$
is either fully contained in $\Omega^c$ or $\Omega^{\circ}$ with $r=d(p,x)$.
\end{Remark}

\begin{Remark}[related literature]
We note that the previous notion of mean curvature under the assumption that $S$ has finite inner curvature, allows to assign to any point $p\in S\cap A^\dagger$ a~number that is the mean curvature of $S$ at $p$. This was useful for proving the Heintze--Karcher inequality in~\cite{kettererHK}.

If one is just interested in lower bounds for the mean curvature, one can adapt a definition of Cavalletti--Mondino~\cite{cm_new}. They define achronal future timelike complete Borel subsets in a~Lorentz length space having {\it forward mean curvature bounded below}. We will not recall their definition for Lorentz length spaces but we give a corresponding definition for $\CDD(K,N)$ metric measure spaces in the appendix of this article and outline how analogs of our results also hold for this notion of lower mean curvature bounds.
\end{Remark}

\section{Proof of inradius bounds and stability (Theorems \ref{T:main} and \ref{T:main2})}

Recall the Jacobian $J_{K,H,N}(r)$ and its maximal interval $r \in (-r_{K,-H,N}, r_{K,H,N})$ of positivity around the origin
defined in \eqref{trig ODE}--\eqref{Jdomain}. To prove our main theorem{s} requires a sort of {below-tangent} implication \eqref{below-tangent} of distorted power concavity \eqref{ineq:conc} from \cite{kettererHK}:

\begin{Lemma}[comparison inequality]\label{lem:key}
Let $h\colon [a,b]\rightarrow [0,\infty)$ be continuous such that $a\leq 0< b$ and
every affine map
$\gamma\colon [0,1]\rightarrow [a,b]$ satisfies
\begin{align}\label{ineq:conc}
h(\gamma_t)^{\frac{1}{N-1}}\geq \sigma_{K/(N-1)}^{(1-t)}(|\dot\gamma|)h(\gamma_0)^{\frac{1}{N-1}}+\sigma_{K/(N-1)}^{(t)}(|\dot\gamma|)h(\gamma_1)^{\frac{1}{N-1}}
\qquad \forall\, t \in [0,1].
\end{align}
Then
\begin{align}\label{below-tangent}
(h(r))^{\frac{1}{N-1}}\leq (h(0))^{\frac{1}{N-1}}\cos_{\frac{K}{N-1}}(r)+ \frac{{\rm d}^+}{{\rm d}s}\Big|_{s=0}(h(s))^{\frac{1}{N-1}} \sin_{\frac{K}{N-1}}(r)
\qquad \forall\, r \in [a,b].
\end{align}
If $h(0)>0$, it follows
$
h(r)h(0)^{-1}\leq J_{K,H,N}(r)
$
where
$H= -\frac{{\rm d}^{+}}{{\rm d}r}\big|_{r=0}\log h(r)$
and in particular $b\leq r_{K,H,N}$.
\end{Lemma}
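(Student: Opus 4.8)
The plan is to set $g := h^{1/(N-1)}$ and $\kappa := K/(N-1)$, so that hypothesis~\eqref{ineq:conc} is precisely the statement that $g$ is ``$\kappa$-concave'' on $[a,b]$. By Remark~\ref{rem:kuconcave}, $g$ is then semiconcave on $[a,b]$; in particular its one-sided derivatives exist everywhere and $g^-(0) \ge g^+(0)$. I would introduce the comparison function $\bar\phi(r) := g(0)\cos_{\kappa}(r) + g^+(0)\sin_{\kappa}(r)$, the solution of $v'' + \kappa v = 0$ with $v(0) = g(0)$, $v'(0) = g^+(0)$, and observe that \eqref{below-tangent} is exactly the inequality $g \le \bar\phi$ on $[a,b]$.

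To prove $g \le \bar\phi$ --- the principle that a $\kappa$-concave function lies below its $\kappa$-``tangent'' at $0$ --- I would argue directly from~\eqref{ineq:conc}. Fix $r \in (0,b]$; for small $s \in (0,r)$, apply~\eqref{ineq:conc} to the affine map $\gamma\colon[0,1] \to [0,r] \subseteq [a,b]$, $\gamma_t = tr$, at time $t = s/r$. Solving the resulting inequality for $g(r)$, inserting the addition identity $\sin_{\kappa}(r-s) = \sin_{\kappa}(r)\cos_{\kappa}(s) - \cos_{\kappa}(r)\sin_{\kappa}(s)$, and letting $s \downarrow 0$ with the expansions $g(s) = g(0) + g^+(0)s + o(s)$, $\cos_{\kappa}(s) = 1 + O(s^2)$, $\sin_{\kappa}(s) = s + o(s)$, one obtains $g(r) \le g^+(0)\sin_{\kappa}(r) + g(0)\cos_{\kappa}(r) = \bar\phi(r)$; the computation only uses that $\sin_{\kappa} > 0$ on $(0,r)$, automatic for $\kappa \le 0$ and equivalent to $r < \pi_{\kappa}$ when $\kappa > 0$. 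For $r \in [a,0)$ the same computation on the chord from $r$ through $0$ yields $g(r) \le g(0)\cos_{\kappa}(|r|) - g^-(0)\sin_{\kappa}(|r|)$, which is $\le \bar\phi(r)$ since $g^-(0) \ge g^+(0)$, $\sin_{\kappa}(|r|) \ge 0$, and $\cos_{\kappa}$ is even while $\sin_{\kappa}$ is odd. (Equivalently one can run a Sturm comparison: the Wronskian $W := g'\bar\phi - g\bar\phi'$ has distributional derivative $\bar\phi\,(g'' + \kappa g) \le 0$ on $\{\bar\phi > 0\}$ and satisfies $W(0^+) = 0 \le W(0^-)$, so $g/\bar\phi$ is monotone toward its endpoint value $1$ at $0$ from each side.) For $\kappa > 0$ one must still rule out $|r| \ge \pi_{\kappa}$: if $g(0) > 0$, a chord from $0$ of length $\ge \pi_{\kappa}$ would force $g \equiv \infty$ at an interior point (via $\sigma_{\kappa}^{(\cdot)} = \infty$ and the convention $0 \cdot \infty = 0$), so $[a,b] \subseteq (-\pi_{\kappa},\pi_{\kappa})$ and the above applies throughout; if $g(0) = 0$ then either $g \equiv 0$ (and \eqref{below-tangent} is trivial) or $g^+(0) > 0$, which forces $a = 0$ by nonnegativity and semiconcavity, and a similar chord argument then forces $b \le \pi_{\kappa}$, so the one-sided estimate again gives $g \le \bar\phi$.

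For the last assertion, assume $h(0) > 0$, so $g(0) > 0$. From $\log h = (N-1)\log g$ one has $g^+(0) = -\frac{H}{N-1}g(0)$ with $H = -\frac{{\rm d}^+}{{\rm d}r}\big|_{r=0}\log h(r)$, so \eqref{below-tangent} becomes $g(r) \le g(0)\,s_{\kappa,H/(N-1)}(r)$ on $[a,b]$. If $s_{\kappa,H/(N-1)}(r) < 0$ at some $r \in [a,b]$, then $0 \le g(r) \le g(0)\,s_{\kappa,H/(N-1)}(r) < 0$, a contradiction; hence $[a,b]$ is contained in the maximal interval about $0$ on which $s_{\kappa,H/(N-1)} \ge 0$, whose right endpoint is $r_{K,H,N}$ by the discussion following~\eqref{equ:ks}, so $b \le r_{K,H,N}$. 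On $[a,b]$ both sides of $g(r) \le g(0)\,s_{\kappa,H/(N-1)}(r)$ are nonnegative, so raising to the power $N-1 > 0$ gives $h(r) = g(r)^{N-1} \le h(0)\,\big(s_{\kappa,H/(N-1)}(r)\big)_+^{N-1} = h(0)\,J_{K,H,N}(r)$, i.e.\ $h(r)h(0)^{-1} \le J_{K,H,N}(r)$. I expect the only genuinely delicate points to be the passage to the limit in the second step and the bookkeeping of the degenerate $\kappa > 0$ ranges and of the zero set of $h$; the rest is algebra.
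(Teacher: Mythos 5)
Your proposal is correct, but it takes a different route from the paper: the paper's entire proof is a citation --- for $a<0$ the lemma ``is exactly the statement of Corollary~4.3 in \cite{kettererHK}'', and the case $a=0$ is handled by letting $r_n\downarrow 0$ and using left-continuity of $\frac{{\rm d}^+}{{\rm d}r}h$ for semiconcave $h$. You instead reprove the below-tangent inequality \eqref{below-tangent} from scratch by differentiating the chord inequality \eqref{ineq:conc} at $t=0$ along the segment $[0,r]$, using the addition formula for $\sin_{\kappa}$; this is essentially the content of the cited corollary, so what your version buys is self-containedness (and a uniform treatment of $a=0$ and $a<0$, since the one-sided argument at $0$ needs no interior point), at the cost of having to police the degenerate ranges yourself. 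Your handling of those ranges is sound: the exclusion of chords of length $\geq\pi_{\kappa}$ when $h(0)>0$, the reflection trick plus $g^-(0)\ge g^+(0)$ for $r<0$, and the positivity argument $0\le g(r)\le g(0)\,s_{\kappa,H/(N-1)}(r)$ forcing $b\le r_{K,H,N}$ (using that zeros of a nontrivial solution of $v''+\kappa v=0$ are simple) are all correct. The only place the sketch is genuinely thin is the corner case $\kappa>0$, $g(0)=0$, $g\not\equiv 0$: the assertion that ``a similar chord argument forces $b\le\pi_{\kappa}$'' does require first observing that \eqref{ineq:conc} forces the positivity set of $g$ to be a single interval abutting $0$ (a chord joining two points where $g>0$ that straddles a zero of $g$, or a chord of length $\ge\pi_{\kappa}$ with one strictly positive endpoint, each yields a contradiction); once that structure is in hand your conclusion follows, so this is terseness rather than a gap. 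Note also that Remark~\ref{rem:kuconcave} is stated for the needle densities $h_{\alpha}$ rather than for an abstract $h$, but its argument applies verbatim to any continuous function satisfying \eqref{ineq:conc}, so your appeal to it for semiconcavity is legitimate.
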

\begin{proof}If $a<0$, the lemma is exactly the statement of Corollary 4.3 in \cite{kettererHK}.

For $a=0$ we pick $r_n\downarrow 0$. Then, the statement follows since $\frac{{\rm d}^+}{{\rm d}r}h(r)$ is continuous from the left for a semiconcave function $h$.
\end{proof}

\begin{Remark}[reverse parameterization]\label{R:key}
If instead $h\colon [a,b]\rightarrow [0,\infty)$ is continuous and every affine map $\gamma\colon [0,1]\rightarrow [a,b]$ satisfies \eqref{ineq:conc} but
$a < 0 \le b$, then applying Lemma~\ref{lem:key} to $\tilde h(r) := h(-r)$ yields $-a \le r_{K,\tilde H,N}$ with $\tilde H = \frac{{\rm d}^{-}}{{\rm d}r}\big|_{r=0}\log h(r)$.
\end{Remark}

\begin{proof}[Proof of Theorem \ref{T:main}]
Let $(X,d,\m)$ be a $\CDD(K',N)$ space and consider $\Omega\subset X$ satisfying $\CDD_{r}(K,N)$ as assumed in Theorem~\ref{T:main}. Let $u=d_{S}$ be corresponding signed distance function. Let $\{X_\alpha\}_{\alpha\in Q}$ be the decomposition of $\T_{d_S}$ and $\int \m_\alpha \,{\rm d}\mathfrak q(\alpha)$ be the disintegration of $\m$ given by Theorem~\ref{th:1dlocalisation}
and Remark~\ref{R:zero-level selection}.
In Remark~\ref{dagger} we define $Q^\dagger\subset Q$. Recall that~$Q^{\dagger}$ is a subset of~$Q$ with full $\mathfrak q$-measure and for all $\alpha\in Q^{\dagger}$ one has
 ${\dm}_\alpha = h_\alpha \,{\rm d} \mathcal H^1$, $X_{\alpha,e}=\overline{X}_\alpha$ and $h_\alpha$ satisfies
\begin{gather}\label{inequality2}
\big(h_\alpha^{\frac{1}{N-1}}\big)'' + \frac{K}{N-1} h_\alpha^{\frac{1}{N-1}}\leq 0 \qquad \mbox{on} \quad (a(X_\alpha),0)
\end{gather}
in the distributional sense.

Assume $K\in \R$ and $H^-_S\geq \kkapp(N-1)$ $\m_S$-a.e.\ Recall that
\begin{gather*}
H^-_S(\gamma_\alpha(0)) =\frac{{\rm d}^-}{{\rm d}r}\Big|_{r=0}\log h_\alpha(r).
\end{gather*}
In particular, $h_\alpha(0)>0$ for $\mathfrak q$-a.e.\ $\alpha\in Q^{\dagger}$, since $h_\alpha(0)=0$ yields $\frac{{\rm d}^-}{{\rm d}r}\log h_\alpha(r)=-\infty< \kkapp(N-1)$ by Definition~\ref{def:meancurvature}.
Now~\eqref{inequality2} implies $h_\alpha$ can be represented by a continuous function on $[a(X_\alpha),0]$ which satisfies the hypotheses of Remark~\ref{R:key}.
That remark then asserts
$-a(X_\alpha) \leq r_{K,\kkapp(N-1),N}$ for any $\alpha\in Q^{\dagger}$.

Since $(\m_\alpha|_{\Omega})_{\alpha\in Q^{\dagger}}$ is a disintegration of $\m|_\Omega$, and since $\m_\alpha|_{\Omega}$ is supported on $\mbox{Im}(\gamma_\alpha|_{[a(X_\alpha), 0]})$ where $\gamma_\alpha\colon [a(X_\alpha, 0]\rightarrow \overline{\Omega}$ is a geodesic, it follows that for $\m$-almost every $x\in \Omega$ there exists $\alpha\in Q^\dagger$ and $t\in [a(X_\alpha),0]$ such that $x=\gamma_\alpha(t)$. For such $x$ it follows that $d_{\Omega^c}(x)= -t \leq -a(X_\alpha)\leq r_{K,\kkapp(N-1),N}$. Hence $d_{\Omega^c}\leq r_{K,\kkapp(N-1),N}$ for $\m$-almost everywhere in $\Omega$.

By continuity of $d_{\Omega^c}$ and $X=\supp \m$ it follows that $d_{\Omega^c}(x)\leq r_{K,\kkapp(N-1),N}$ for all $x\in \Omega$.
In particular, the inscribed radius satisfies $\inrad\Omega \leq r_{K,\kkapp(N-1),N}$.
\end{proof}

\begin{proof}[Proof of Theorem \ref{T:main2}]
Assume $K\geq \bar K -\delta$, $H^-_S\geq \bar H -\delta$ $\m_S$-a.e.\ and $N\leq \bar{N}+\delta$. Since $\delta>0$ and $X \in \CDD(K,N)$ imply
$X \in \CDD(\bar K -\delta, \bar N +\delta)$, Theorem \ref{T:main} yields
\[ \inrad \Omega \leq r_{\bar K-\delta, \bar H-\delta, \bar N+\delta}.\]

Now for any $\epsilon>0$ there exists $\delta>0$ such that $r_{\bar K-\delta, \bar H-\delta, \bar N+\delta }\leq r_{\bar K, \bar H, \bar N}+\epsilon$ since the function
$s_{\frac{K}{N-1},\frac{H}{N-1}}(r)=\cos_{\frac{K}{N-1}}(r) - \frac{H}{N-1}\sin_{\frac{K}{N-1}}(r)$ whose first positive zero defines $r_{K,H,N}$ is continuously differentiable with respect to $K$, $H$, $N$ and $r$, and its derivative is non-zero at $r=r_{\bar K,\bar H,\bar N}$; the implicit function theorem then gives
continuous differentiability of $r_{\bar K,\bar H,\bar N}$ with respect to its parameters near any $\big(\frac{\bar K}{N-1},\frac{\bar H}{N-1}\big)$ satisfying the ball condition.
If the ball condition is not satisfied, then $r_{\bar H,\bar K,\bar N}=\infty$ and the theorem holds trivially.
\end{proof}

\section{Rigidity}\label{sec:rig}
\subsection{The Riemannian curvature-dimension condition}

We recall briefly the \textit{Riemannian curvature-dimension condition} that is a strengthening of the $\CDD(K,N)$ condition and the result of the combined efforts by several authors \cite{agmr,agsriemannian, amsnonlinear, cavmil, erbarkuwadasturm, giglistructure}.

The {\it Cheeger energy} $\Ch\colon L^2(\m)\rightarrow [0,\infty]$ of a metric measure space $(X,d,\m)$ is defined as
\begin{gather}\label{Cheeger energy}
2\Ch(f):=\liminf_{\lip(X)\ni u_n \overset{L^2}{\rightarrow} f} \int (\lip u_n)^2 \dm,
\end{gather}
where $\lip(X)$ is the space of Lipschitz functions on $(X,d,\m)$ and $\lip u(x):=\limsup\limits_{y\rightarrow x}\frac{|u(x)-u(y)|}{d(x,y)}$ is the local slope of $u\in \lip(X)$.
The $L^2$-Sobolev space is defined as
$W^{{1,2}}(X)=\{f\in L^2(\m)\colon$ $\Ch(f)<\infty\}$
and equipped with the norm $\|f\|^2_{}:={\|f\|_{\scriptscriptstyle{L^2(\m)}}^2+ 2\Ch(f)}$~\cite{agslipschitz, agsheat}.

For $u\in W^{1,2}(X)$ the Cheeger energy can be written as
\begin{gather*}
2\Ch(u)=\int_X|\nabla u|^2 \dm
\end{gather*}
for a measurable density $|\nabla u|\colon X\rightarrow [0,\infty)$ that is identified as the {\it minimal weak upper gradient} of $u$. For more details about the minimal weak upper gradients and its characterizations we refer to \cite{agslipschitz, cheegerlipschitz}.

\begin{Definition}[Riemannian curvature-dimension condition]\label{def:RCD}
A metric measure space $(X,d,\m)$ satisfies the \textit{Riemannian curvature-dimension condition} $\RCRD(K,N)$ if $(X,d,\m)$ satisfies the condition $\CDD(K,N)$ and $W^{1,2}(X)$ is a~Hilbert space,
meaning the $2$-homogeneous Cheeger energy~\eqref{Cheeger energy} satisfies the parallelogram law:
\[
\Ch(f+g) + \Ch(f-g) = 2 \Ch(f) + 2 \Ch(g).
\]
\end{Definition}

When $(X,d,\m)$ is an $\RCRD$ space, one can introduce a symmetric bilinear form $\langle \cdot, \cdot \rangle$ on the Sobolev space $W^{1,2}(X)$ with values in $L^1(\m)$ via
\begin{gather*}
(f,g)\in W^{1,2}(X)\times W^{1,2}(X)\mapsto \langle \nabla f,\nabla g\rangle:=
\frac{1}{4}|\nabla (f+g)|^2 - \frac{1}{4}|\nabla (f-g)|^2
\in L^1(\m).
\end{gather*}

\subsection{Volume cone implies metric cone}
The following theorem by Gigli and De Philippis will be crucial in the proof of the rigidity result.

\begin{Theorem}[{volume cone implies metric cone \cite[Theorem 4.1]{DGi}}]\label{thm:gp}
Let $K\!\in\! \{-(N-1),0, N-1\}$, $N\in [1,\infty)$ and $(X,d,\m)$ an $\RCRD(K,N)$ space with $\supp\m=X$. Assume there exists $o\in X$ and $R>r>0$ such that
\begin{gather}\label{volumecone}
\m(B_R(o))= \frac{{\displaystyle \int_0^R\big( \sin_{\frac{K}{N-1}} u\big)^{N-1}{\rm d}u}}{{\displaystyle \int_0^r\big( \sin_{\frac{K}{N-1}}v \big)^{N-1}{\rm d}v}} \m(B_r(o)).
\end{gather}
Then exactly one of the following three cases holds:
\begin{itemize}\itemsep=0pt
\item[$(1)$] If $\partial B_{R/2}(o)$ contains only one point, then $X$ is isometric to $[0,\diam_X]$ $($or $[0,\infty)$ if $X$ is unbounded$)$ with an isometry that sends $o$ to $0$ either way. The measure $\m|_{B_R(o)}$ is proportional to $\big(\sin_{\frac{K}{N-1}}^{N-1} x\big){\rm d}x$.

\item[$(2)$] If $\partial B_{R/2}(o)$ contains exactly two points then $X$ is a $1$-dimensional Riemannian manifold, possibly with boundary, and there exists a bijective, locally distance preserving map from~$B_{R}(o)$ to~$(-R,R)$ that sends $o$ to $0$ under which the measure $\m|_{B_R(o)}$ becomes proportional to
$\big(\sin_{\frac{K}{N-1}}^{N-1}|x|\big) {\rm d}x$.

\item[$(3)$] If $\partial B_{R/2}(o)$ contains more than two points then $N\!\geq \!2$ and there exists an $\RCRD(N{-}2, N{-}1)$ space $Z$ with $\diam_Z\leq \pi$ and a local isometry ${U}\colon B_R(o) \rightarrow [0,R)\times_{\sin_{\frac{K}{N-1}}}^{N} Z$ sending~$o$ to~$0$ that is also a measure preserving bijection.
\end{itemize}
\end{Theorem}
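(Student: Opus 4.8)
Since Theorem~\ref{thm:gp} is \cite[Theorem~4.1]{DGi}, I will only outline the line of argument I would follow. After rescaling $d$ and $\m$ one may assume $K\in\{-(N-1),0,N-1\}$; abbreviate $d_o:=d(o,\cdot)$, $\kappa:=\tfrac{K}{N-1}$ and $\mathfrak v(s):=\int_0^s\sin_\kappa^{N-1}$. The plan is to extract \emph{radial} rigidity from the volume identity \eqref{volumecone} by $1D$-localization along the rays emanating from $o$, and then to promote it---using the Hilbertian ($\RCRD$) structure and the second-order calculus it supports---to a genuine warped-product splitting.

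First I would apply Theorem~\ref{th:1dlocalisation} to the $1$-Lipschitz function $u=d_o$: its transport rays are the unit-speed minimizing geodesics issuing from $o$, so $\m=\int_Q\m_\alpha\,\mathrm{d}\mathfrak q(\alpha)$ with $\mathrm{d}\m_\alpha=h_\alpha\,\mathrm{d}\mathcal H^1|_{X_\alpha}$, each $h_\alpha^{1/(N-1)}$ vanishing at the tip and satisfying the differential inequality \eqref{vvv} on $(0,b(X_\alpha))$ (whence $t\mapsto h_\alpha^{1/(N-1)}(t)/\sin_\kappa(t)$ is non-increasing). By Bishop--Gromov (valid already under $\CDD(K,N)$) the ratio $\phi(s):=\m(B_s(o))/\mathfrak v(s)$ is non-increasing, and \eqref{volumecone} says $\phi(R)=\phi(r)$, hence $\phi\equiv\phi(r)$ on $[r,R]$. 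Writing $\m(B_s(o))=\int_Q\int_0^{s\wedge b(X_\alpha)}h_\alpha(t)\,\mathrm{d}t\,\mathrm{d}\mathfrak q(\alpha)$ and differentiating, a weighted-average argument in $s$ forces, for $\mathfrak q$-a.e.\ $\alpha$, the ratio $h_\alpha^{1/(N-1)}/\sin_\kappa$ to be constant on all of $(0,R)$; that is, $h_\alpha^{1/(N-1)}=c_\alpha\sin_\kappa$ on $(0,R)$ and $b(X_\alpha)\ge R$. This pins the measure down along rays, and it is the pointwise version of equality, $\m$-a.e.\ on $B_R(o)$, in Gigli's Laplacian comparison $\Delta d_o\le(N-1)\tfrac{\cos_\kappa}{\sin_\kappa}(d_o)$, with no singular part of $\Delta d_o$ on the annulus $B_R(o)\setminus\overline{B_r(o)}$.

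Next I would feed this equality into the self-improved Bochner ($\Gamma_2$) inequality with Hessian term that holds on $\RCRD(K,N)$ spaces; this should yield $|\nabla d_o|=1$ $\m$-a.e.\ on $B_R(o)$ together with the rigid Hessian identity for the radial potential $u=u(d_o)$ normalised so that $u=\tfrac12 d_o^2$ when $K=0$ (and $\Hess u=(1-\kappa u)\,g$ in general). The existence of such a $u$ on $B_R(o)$ is exactly the input of the known characterization of (truncated) metric measure cones and suspensions over $\RCRD$ spaces (cf.\ \cite{ketterer,ketterer2}): it produces a measure-preserving local isometry $U\colon B_R(o)\to[0,R)\times_{\sin_\kappa}^{N}Z$ sending $o$ to $0$, where $Z$ is a suitable rescaling of $\partial B_{R/2}(o)$ with its induced length distance and the surface measure from the disintegration, and a tensorization/localization argument for the curvature-dimension condition across the warped product identifies $Z$ as an $\RCRD(N-2,N-1)$ space with $\diam Z\le\pi$. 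The trichotomy is then read off from $\#\partial B_{R/2}(o)$: a single point forces $Z=\varnothing$ and $X$ an interval with the stated measure; two points force a two-point $Z$ and a $1$-dimensional $X$; three or more points force $N\ge2$ and a genuine cross-section $Z$.

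I expect the main obstacle to be promoting equality in the \emph{integrated} volume comparison to the \emph{pointwise} Hessian rigidity of $d_o$, and extracting the warped-product splitting from it. This requires the full second-order calculus on $\RCRD$ spaces (test plans, the Bochner inequality and its self-improvement controlling $\|\Hess d_o\|^2$, and enough regularity of $d_o$ away from $o$), and it is precisely here that the hypothesis cannot be weakened from $\RCRD$ to $\CDD$: the needle decomposition of the first step is only a nonsmooth surrogate for the exponential chart and on its own yields a measure identity, whereas obtaining the actual isometry onto $[0,R)\times_{\sin_\kappa}^{N}Z$---and the inheritance of $\RCRD(N-2,N-1)$ by the cross-section---genuinely uses Hilbertianity of $W^{1,2}(X)$.
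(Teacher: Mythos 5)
The paper does not prove this statement: Theorem~\ref{thm:gp} is imported verbatim from De~Philippis and Gigli \cite[Theorem~4.1]{DGi} (supplemented only by Remarks~\ref{rem:refined0} and~\ref{rem:refined}), so there is no internal proof to compare yours against, and treating it as a citation plus a roadmap is consistent with how the paper uses it. As a roadmap your sketch is broadly faithful to the known argument: measure rigidity along radial directions, upgrade to a pointwise Hessian identity for a radial potential via the self-improved Bochner inequality, and reconstruction of the warped product from that identity are indeed the three pillars of \cite{DGi}. Two caveats. First, your opening step is not how \cite{DGi} proceed: they do not pass through the Cavalletti--Mondino localization of \cite{cavmon}, but obtain the $\m$-a.e.\ equality ${\bf \Delta} d_o=(N-1)\frac{\cos_\kappa}{\sin_\kappa}(d_o)\,\m$ directly from the equality case of Bishop--Gromov and the weak Laplacian comparison, and then run the gradient flow of the radial potential to build the dilations and the cross-section $Z$ (whose distance is constructed from that flow, not simply declared to be the induced length metric on $\partial B_{R/2}(o)$). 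Your needle-based derivation of the radial measure rigidity is a legitimate substitute, but note that constancy of $\phi$ on $[r,R]$ alone does not yield constancy of the individual ratios $h_\alpha^{1/(N-1)}/\sin_\kappa$ on all of $(0,R)$; you need the intermediate observation that on $(r,R)$ the surface ratio equals its own running average over $(0,s)$, hence is constant on $(0,R)$, before the term-by-term monotonicity argument applies. Second, the genuinely hard content --- promoting the Laplacian identity to $\Hess u=(1-\kappa u)g$, constructing $Z$ and its distance, and verifying that $Z$ is $\RCRD(N-2,N-1)$ with $\diam_Z\le\pi$ --- is exactly what you defer to ``known machinery''; also, in case~(1) the cross-section is a one-point space rather than empty. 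So the proposal is a correct outline of the cited proof rather than a proof, which is all that the paper itself supplies.
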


\begin{Remark}[excluding the middle case]\label{rem:refined0}
In the second case the conclusion also implies that $N=1$: otherwise $X$ is locally isomorphic to $(-R,R)$ equipped with a measure proportional to $\big(\sin_{{K}/{N-1}}^{N-1}|x|\big){\rm d}x$. But for $N>1$ this space does not satisfy the $\CDD$ condition because the density of the reference measure vanishes at $0$. This means the density is not semi-concave which is a necessary condition for the measure on a $1D$ space to satisfy the $\CDD$ condition.
\end{Remark}

\begin{Remark}\label{rem:refined}
In the proof of Theorem \ref{thm:gp} Gigli and De Philippis show that the map $U$ has an inverse ${V}\colon B_R(0) \rightarrow B_R(o)$ that is also a local isometry.
\end{Remark}

\subsection{Distributional Laplacian and strong maximum principle}We recall the notion of {the} distributional Laplacian for $\RCRD$ spaces (cf.~\cite{cav-mon-lapl-18, giglistructure}).

Let $(X,d,\m)$ be an $\RCRD$ {space,
and $\lip_c(\Omega)$ denote the set of Lipschitz functions compactly supported in an open subset $\Omega \subset X$.}
A {\em Radon functional} over $\Omega$ is a linear functional $T: \lip_c(\Omega)\rightarrow \R$ such that for every compact subset $W$ in $\Omega$ there exists a constant $C_W\geq 0$ such that
\begin{gather}\label{Radon}
|T(f)|\leq {C_W} \max_W|f| \qquad \forall\, f\in \lip_c(\Omega) \quad \mbox{with} \ \supp f\subset W.
\end{gather}
One says $T$ is non-negative if $T(f)\geq 0$ for all $f\in \lip_c(\Omega)$ satisfying $f\geq 0$.

The classical Riesz--Markov--Kakutani representation theorem says that for every non-nega\-ti\-ve Radon functional $T$ from~\eqref{Radon} there exists a non-negative Radon measure $\mu_T$ such that $T(f)=\int f \,{\rm d}\mu_T$ for all $f\in \lip_c(\Omega)$.

\begin{Definition}[nonsmooth Laplacian]Let $\Omega\subset X$ be an open subset and let $u\in \lip(X)$. One says $u$ is in domain of the distributional Laplacian on $\Omega$ provided there exists a Radon functional $T$ over $\Omega$ such that
\begin{gather*}
T(f) =\int \langle \nabla u, \nabla f\rangle \dm \qquad \forall\, f \in \lip_c(\Omega).
\end{gather*}
In this case we write $u\in D({\bf \Delta}, \Omega)$. If $T$ is represented as a measure $\mu_T$, one writes $\mu_T\in {\bf \Delta}u|_{\Omega}$, and if there is only one such measure $\mu_T$ by abuse of notation we will identify $\mu_T$ with $T$ and write $\mu_T={\bf \Delta} u|_{\Omega}$.
\end{Definition}
We also recall that $u\in W_{\rm loc}^{1,2}(\Omega)$ for an open set $\Omega\subset X$ if and only if for any Lipschitz function $\phi$ with compact support in $\Omega$ we have $\phi \cdot u\in W^{1,2}(X)$.
In particular, if $u\in \lip(X)$ then $u\in W_{\rm loc}^{1,2}(\Omega)$.

\begin{Remark}[{locality and linearity}]\quad
\begin{itemize}\itemsep=0pt
\item[(i)] If $u\in D({\bf \Delta}, \Omega)$ and $\Omega'$ is open in $X$ with $\Omega'\subset \Omega$, then $u\in D({\bf \Delta},\Omega')$ and for $\mu\in {\bf \Delta}u|_{\Omega}$ it follows that $\mu|_{\Omega'}\in {\bf \Delta}u|_{\Omega'}$.
\item[(ii)] If $u,v\in D({\bf \Delta}, \Omega)$, then $u+v\in D({\bf \Delta},\Omega)$ and for $\mu_u\in {\bf \Delta}u|_{\Omega}$ and $\mu_v\in {\bf \Delta}v_{\Omega}$ it follows that $\mu_u+\mu_v\in {\bf \Delta}(u+v)|_{\Omega}$.
\end{itemize}
\end{Remark}

Recall that $u\in W^{1,2}(\Omega)$ is {\it sub-harmonic} if
\begin{gather*}
\int_{\Omega}\! |\nabla u|^2 \dm\leq \!\int_{\Omega}\! |\nabla (u+g)|^2 \dm \!\!\!\qquad \forall\, g\in W^{1,2}(\Omega) \ \mbox{with} \ g \leq 0 \  \mbox{compactly supported in $\Omega^\circ$.}
\end{gather*}
One says $u$ is {\it super-harmonic} if $-u$ is sub-harmonic, and $u$ is {\it harmonic} if it is both sub- and super-harmonic. The following can be found in \cite[Theorem~4.3]{giglimondino}.

\begin{Theorem}[characterizing super-harmonicity]\label{thm:sh}
Let $X$ be an $\RCRD(K,N)$ space with $K\in \R$ and $N\in [1,\infty)$, let $\Omega \subset X$ be open and $u\in W_{\rm loc}^{1,2}(\Omega)$.
Then $u$ is {super-harmonic} if and only if ${u} \in D({\bf \Delta}, \Omega)$ and there exists $\mu\in {\bf \Delta} u|_{\Omega}$ such that $\mu \leq 0$.
\end{Theorem}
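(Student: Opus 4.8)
The plan is to prove the characterization of super-harmonicity stated in Theorem~\ref{thm:sh} by showing the two implications separately, the direction from distributional Laplacian to variational sub/super-harmonicity being the easy one and the reverse being the crux.

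First I would establish the easy direction: assume $u \in D(\bm\Delta,\Omega)$ with some $\mu \in \bm\Delta u|_\Omega$ satisfying $\mu \le 0$. To show $u$ is super-harmonic, i.e.\ that $-u$ is sub-harmonic, I would fix $g \in W^{1,2}(\Omega)$ with $g \le 0$ and compact support in $\Omega^\circ$, and expand the quadratic functional
\[
\int_\Omega |\nabla(-u+g)|^2\,\dm = \int_\Omega |\nabla u|^2\,\dm - 2\int_\Omega \langle \nabla u, \nabla g\rangle\,\dm + \int_\Omega |\nabla g|^2\,\dm.
\]
Since $g$ can be approximated by Lipschitz functions compactly supported in $\Omega$ (this is where $u \in \lip(X) \subset W^{1,2}_{\rm loc}(\Omega)$ is used, so that $\langle \nabla u, \nabla g\rangle \in L^1$ and the pairing $\int \langle \nabla u, \nabla g\rangle\,\dm = \int g\,{\rm d}\mu$ extends by density), the middle term equals $\int_\Omega g\,{\rm d}\mu \ge 0$ because $g \le 0$ and $\mu \le 0$. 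Dropping the non-negative term $\int_\Omega |\nabla g|^2\,\dm$ is not needed; rather one keeps it and concludes $\int_\Omega |\nabla(-u)|^2\,\dm \le \int_\Omega |\nabla(-u+g)|^2\,\dm$, which is exactly sub-harmonicity of $-u$.

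The harder direction is the converse: assume $u$ is super-harmonic and produce a non-positive measure representing its distributional Laplacian. Here I would consider the linear functional $T(f) := \int_\Omega \langle \nabla u, \nabla f\rangle\,\dm$ on $\lip_c(\Omega)$, which is well-defined since $u \in W^{1,2}_{\rm loc}(\Omega)$. The key point is to show $T$ is non-positive, i.e.\ $T(f) \le 0$ for all $f \in \lip_c(\Omega)$ with $f \ge 0$: applying the super-harmonicity inequality with $g = -tf$ for small $t > 0$ gives $\int |\nabla(-u)|^2 \le \int |\nabla(-u) - t\nabla f|^2 = \int |\nabla u|^2 + 2t\int\langle \nabla u, \nabla f\rangle + t^2 \int |\nabla f|^2$, so $0 \le 2t\,T(f) + t^2\int|\nabla f|^2$; dividing by $t$ and letting $t \downarrow 0$ yields $T(f) \ge 0$ for $g = -tf \le 0$, hence $T(-f) \le 0$, i.e.\ $T(f) \le 0$ for $f \ge 0$. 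Then one must verify the Radon bound~\eqref{Radon}: writing $-T = (-T)^+$ as a non-negative functional, one shows local boundedness $|T(f)| \le C_W \max_W |f|$ for $f$ supported in a compact $W \subset \Omega$. This follows by a standard cutoff argument — pick a Lipschitz cutoff $\psi$ equal to $1$ on $W$, supported in $\Omega$, write $f = (\max_W|f|)\psi - ((\max_W|f|)\psi - f)$ as a difference of two non-negative functions each dominated by $\max_W|f|$ times fixed functions, and use non-negativity of $-T$ together with $|T(\psi)| < \infty$. Once $-T$ is a non-negative Radon functional, the Riesz--Markov--Kakutani theorem quoted in the text yields a non-negative Radon measure $\nu$ with $-T(f) = \int f\,{\rm d}\nu$; setting $\mu := -\nu \le 0$ gives $\mu \in \bm\Delta u|_\Omega$, completing the proof.

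The main obstacle I anticipate is the verification of the Radon bound~\eqref{Radon} for $-T$, i.e.\ passing from "$-T \ge 0$ on non-negative test functions" to "$-T$ is a genuine Radon functional with the required local bounds." This is the technical heart and requires the cutoff trick together with the fact that $\int_\Omega |\nabla u|\,|\nabla \psi|\,\dm < \infty$ for Lipschitz $\psi$ of compact support, which holds because $u$ is Lipschitz and $\m$ is locally finite. The rest is a routine application of the Riesz representation theorem and the definition of $D(\bm\Delta,\Omega)$; in fact since the reference explicitly cites \cite[Theorem~4.3]{giglimondino}, one may simply invoke that result, but the above sketch indicates how it is proved.
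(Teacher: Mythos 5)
The paper itself does not prove Theorem~\ref{thm:sh}; it simply cites \cite[Theorem~4.3]{giglimondino}. Your strategy --- first variation of the Dirichlet energy to pin down the sign of $T(f)=\int\langle\nabla u,\nabla f\rangle\dm$ on non-negative test functions, then the cutoff trick to verify the Radon bound~\eqref{Radon} and Riesz--Markov--Kakutani to produce the representing measure --- is precisely the standard proof of that result, and the division into the two implications is organized correctly.

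There is, however, a genuine sign inconsistency in your hard direction. Your first-variation computation correctly yields $T(f)=\int\langle\nabla u,\nabla f\rangle\dm\geq 0$ for all $0\leq f\in\lip_c(\Omega)$. The next clause, ``hence $T(-f)\leq 0$, i.e.\ $T(f)\leq 0$ for $f\geq0$,'' does not follow and contradicts what you just proved: $T(-f)\leq0$ for $f\geq0$ only says $T$ is non-positive on non-positive functions, which is the same statement as before, not its reversal. Consequently $-T$ is \emph{not} the non-negative functional, and the announced goal ``show $T$ is non-positive'' is the wrong goal. The fix is to apply the cutoff argument and Riesz--Markov to $T$ itself, obtaining $\nu\geq0$ with $T(f)=\int f\,{\rm d}\nu$, and then to set $\mu:=-\nu\leq0$, which represents ${\bf\Delta}u|_\Omega$ under the standard convention $\int f\,{\rm d}\mu=-\int\langle\nabla u,\nabla f\rangle\dm$. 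Be aware that the paper's Definition of the nonsmooth Laplacian, as printed, omits this minus sign; taken literally it would turn the conclusion into $\mu\geq0$, so the corrected (Gigli--Mondino) convention is needed for the theorem to read as stated --- this is almost certainly the source of your confusion, but as written the step is self-contradictory. A secondary, smaller gap: in the easy direction you extend the identity $\int\langle\nabla u,\nabla g\rangle\dm=-\int g\,{\rm d}\mu$ from $g\in\lip_c(\Omega)$ to non-positive $g\in W^{1,2}(\Omega)$ with compact support ``by density,'' but since $\mu$ may be singular with respect to $\m$, convergence $g_n\to g$ in $W^{1,2}$ does not give $\int g_n\,{\rm d}\mu\to\int g\,{\rm d}\mu$; one needs uniformly bounded approximants converging $\mu$-a.e.\ (or a truncation argument), as in the cited reference.
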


The following is \cite[Theorem 9.13]{bjoern} (see also~\cite{gigli_rigoni}):
\begin{Theorem}[strong maximum principle]\label{thm:mp}
Let $X$ be an $\RCRD(K,N)$ space with $K\in \R$ and $N\in [1,\infty)$, let $\Omega\subset X$ be a connected open set with compact closure and let $u\in W_{\rm loc}^{1,2}(\Omega)\cap C(\Omega)$ be sub-harmonic.
If there exists $x_0\in \Omega$ such that $u(x_0)=\max_{\bar \Omega} u$ then $u$ is constant.
\end{Theorem}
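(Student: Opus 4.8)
The plan is to run the classical open--closed dichotomy for the superlevel set of $u$ at its maximum, the only nontrivial input being a weak Harnack inequality for nonnegative supersolutions in the $\RCRD(K,N)$ setting.

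First I would set $M := \max_{\bar\Omega} u$ and $E := \{x \in \Omega : u(x) = M\}$. By continuity of $u$ on $\Omega$, the set $E$ is relatively closed in $\Omega$, and it is nonempty since $x_0 \in E$. Because $\Omega$ is connected, it suffices to prove that $E$ is open; then $E = \Omega$, so $u \equiv M$ on $\Omega$ and, by continuity on $\bar\Omega$, $u$ is constant.

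To prove openness, fix $x_1 \in E$ and choose $\rho > 0$ with $\overline{B_{2\rho}(x_1)} \subset \Omega$ (possible since $\Omega$ is open). Put $v := M - u \geq 0$. Since $\nabla(u-M) = \nabla u$, subtracting the constant $M$ leaves the variational sub-harmonicity condition unchanged, so $u - M$ is sub-harmonic, i.e.\ $v$ is super-harmonic; moreover $v \in W^{1,2}_{\rm loc}(\Omega)\cap C(\Omega)$. By Theorem~\ref{thm:sh} applied to $v$, we have $v \in D({\bf \Delta}, \Omega)$ and there exists $\mu \in {\bf \Delta}v|_{\Omega}$ with $\mu \leq 0$; that is, $v$ is a genuine nonnegative supersolution of the Laplacian on $\Omega$. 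Now I invoke the weak Harnack inequality for such supersolutions: there exist $p > 0$ and $C \geq 1$, depending only on structural constants near $x_1$, with
\[
\left( \frac{1}{\m(B_{\rho}(x_1))} \int_{B_{\rho}(x_1)} v^p \, \dm \right)^{1/p} \leq C\, \esinf_{B_{\rho}(x_1)} v .
\]
Since $x_1 \in B_\rho(x_1)$, continuity of $v$ gives $\esinf_{B_\rho(x_1)} v = \inf_{B_\rho(x_1)} v \leq v(x_1) = 0$, and as $v \geq 0$ this infimum equals $0$. Hence the left-hand side vanishes, so $v = 0$ $\m$-a.e.\ on $B_{\rho}(x_1)$, and therefore $v \equiv 0$ there by continuity. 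Thus $B_{\rho}(x_1) \subset E$, proving $E$ open and finishing the argument.

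The main obstacle is supplying the weak Harnack inequality in the nonsmooth setting. It rests on three structural facts, all available here: (i) $\m$ is locally doubling, a consequence of Bishop--Gromov volume monotonicity under $\CDD(K,N)$; (ii) $(X,d,\m)$ supports a local $(1,2)$-Poincar\'e inequality, known for $\CDD(K,N)$ and a fortiori for $\RCRD(K,N)$ spaces; and (iii) on an $\RCRD$ space the Cheeger energy induces a strongly local, regular Dirichlet form, so that sub-/super-solutions and their distributional Laplacians behave well and the De Giorgi--Nash--Moser iteration applies. Under (i)--(iii) both the weak Harnack inequality and the resulting strong maximum principle belong to the abstract potential theory on PI spaces developed by Bj\"orn and Bj\"orn \cite{bjoern}, which is why \cite[Theorem~9.13]{bjoern} is cited. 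A reader wishing to bypass Moser iteration could instead compare $u$ on $B_\rho(x_1)$ with the solution of the Dirichlet problem having the same boundary data and then apply the elliptic Harnack inequality for harmonic functions on $\RCRD(K,N)$ spaces, but this ultimately appeals to the same doubling-plus-Poincar\'e structure.
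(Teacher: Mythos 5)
The paper gives no proof of this theorem: it is quoted verbatim as Theorem~9.13 of Bj\"orn--Bj\"orn, with Gigli--Rigoni as an alternative reference. Your sketch is a correct reconstruction of that cited argument --- the open/closed dichotomy for the set $\{u=M\}$ combined with the weak Harnack inequality for nonnegative superminimizers, whose structural hypotheses (local doubling via Bishop--Gromov and a local $(1,2)$-Poincar\'e inequality) do hold on $\RCRD(K,N)$ spaces --- so it matches the proof the paper relies on rather than offering a different route.
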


Let us recall another result of Cavalletti--Mondino:

\begin{Theorem}[{Laplacian of a signed distance, \cite[Corollary 4.16]{cav-mon-lapl-18}}]\label{thm:cm}
Let $(X,d,\m)$ be a $\CDD(K,N)$ space, and $\Omega$ and $S=\partial \Omega$ as above. Then
$d_S\in D({\bf \Delta}, X\backslash S)$, and one element of ${\bf \Delta}d_S|_{X\backslash S}$ that we also denote with ${\bf \Delta }d_S|_{X\backslash S}$ is the Radon functional on $X\backslash S$ given by the representation formula
\begin{gather*}
({\bf \Delta} d_S)|_{X\backslash S}= (\log h_{\alpha})'\m|_{X\backslash S}+\int_Q ( h_{\alpha}\delta_{a(X_{\alpha})\cap \{d_S < 0\}} - h_{\alpha}\delta_{b(X_{\alpha})\cap \{d_S >0\}} ) \,{\rm d}\mathfrak q(\alpha).
\end{gather*}
We note that the Radon functional ${\bf \Delta} d_S|_{X\backslash S}$ can be represented as the difference of two measures $[{\bf \Delta} d_S]^+$ and $[{\bf \Delta} d_S|_{X\backslash S}]^-$ such that
\begin{align*}
[{\bf \Delta} d_S|_{X\backslash S}]^+_{\rm reg} - [{\bf \Delta} d_S|_{X\backslash S}]^-_{\rm reg} = (\log h_{\alpha})' \quad \m\mbox{-a.e.},
\end{align*}
where $[{\bf \Delta} d_S|_{X\backslash S}]^{\pm}_{\rm reg}$ denotes the $\m$-absolutely continuous part in the Lebesgue decomposition of $[{\bf \Delta} d_S|_{X\backslash S}]^{\pm}$. In particular, $-(\log h_{\alpha})'$ coincides with a measurable function $\m$-a.e.
\end{Theorem}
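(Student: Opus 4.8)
The plan is to extract $\mathbf{\Delta} d_S|_{X\setminus S}$ from the $1D$-localization of $\m$ along the $1$-Lipschitz function $u=d_S$, so that computing the distributional Laplacian reduces to a fibrewise integration by parts along needles. By Lemma~\ref{somelemma} together with $\m(S)=0$, the transport set $\mathcal T_{d_S}$ exhausts $X\setminus S$ up to an $\m$-null set, and restricting to the index set $Q^\dagger$ of Remark~\ref{def:dagger} (which carries full $\mathfrak q$-measure) one has, by Theorem~\ref{th:1dlocalisation} and Remarks~\ref{rem:kuconcave} and~\ref{rem:extden},
\[
\m|_{X\setminus S}=\int_{Q^\dagger}h_\alpha\,\mathcal H^1|_{X_\alpha}\,{\rm d}\mathfrak q(\alpha),
\]
where each $\gamma_\alpha\colon[a(X_\alpha),b(X_\alpha)]\to X$ is a unit-speed transport ray of $d_S$, parametrised so that $d_S\circ\gamma_\alpha(t)=t$, with $h_\alpha$ locally Lipschitz on $(a(X_\alpha),b(X_\alpha))$ and continuous up to the endpoints, and $d_S(\gamma_\alpha(a(X_\alpha)))\le 0\le d_S(\gamma_\alpha(b(X_\alpha)))$.

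The technical heart is the pointwise identity
\[
\big\langle\nabla d_S,\nabla f\big\rangle(\gamma_\alpha(t))=\frac{{\rm d}}{{\rm d}t}\big(f\circ\gamma_\alpha\big)(t)\qquad\text{for }\m\text{-a.e. point and every }f\in\lip(X),
\]
which I would take from~\cite{cav-mon-lapl-18}: it rests on $|\nabla d_S|=1$ $\m$-a.e.\ on $X\setminus S$ and on the compatibility of the first-order Sobolev calculus with ordinary differentiation along $\m$-a.e.\ transport ray, and this is precisely the point where essential non-branching and $\CDD(K,N)$ are used, the needle foliation being adapted, off an $\m$-null set, to the differential structure of $X$. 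I expect this to be the main obstacle; everything after it is bookkeeping. Granting it, for $f\in\lip_c(X\setminus S)$ Fubini yields
\[
\int\big\langle\nabla d_S,\nabla f\big\rangle\,\dm=\int_{Q^\dagger}\int_{a(X_\alpha)}^{b(X_\alpha)}\big(f\circ\gamma_\alpha\big)'(t)\,h_\alpha(t)\,{\rm d}t\,{\rm d}\mathfrak q(\alpha).
\]
Integration by parts in $t$ on each needle is legitimate since $h_\alpha$ is locally Lipschitz on the open needle, extends continuously to its endpoints, and $f\circ\gamma_\alpha$ is Lipschitz. The interior term gives $-\int_{Q^\dagger}\int(f\circ\gamma_\alpha)\,h_\alpha'\,{\rm d}t\,{\rm d}\mathfrak q=-\int f\,(\log h_\alpha)'\,\dm$ after writing $h_\alpha'=(\log h_\alpha)'\,h_\alpha$ and recombining the disintegration, where $(\log h_\alpha)'(x)$ denotes differentiation along the ray through $x$. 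The endpoint term equals $\int_{Q^\dagger}\big[(fh_\alpha)(\gamma_\alpha(b(X_\alpha)))-(fh_\alpha)(\gamma_\alpha(a(X_\alpha)))\big]{\rm d}\mathfrak q(\alpha)$; because $\supp f$ is compact inside the open set $X\setminus S$, $f$ vanishes near $S$, so an endpoint contributes only when it lies in $X\setminus S$, which, by $d_S(\gamma_\alpha(a(X_\alpha)))\le 0\le d_S(\gamma_\alpha(b(X_\alpha)))$, happens exactly when $d_S<0$ at $a(X_\alpha)$, respectively $d_S>0$ at $b(X_\alpha)$. Collecting both contributions (and replacing $Q^\dagger$ by $Q$ at no cost) reproduces the asserted representation formula, up to the overall sign fixed by one's convention for the distributional Laplacian.

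For the Jordan-type decomposition I would note that $\mathbf{\Delta} d_S|_{X\setminus S}$ is a locally finite signed measure---either by the general theory of Laplacians of semiconcave functions on $\CDD$ spaces, or simply by splitting the representation formula just derived---so it decomposes as $[\mathbf{\Delta} d_S|_{X\setminus S}]^+-[\mathbf{\Delta} d_S|_{X\setminus S}]^-$ into nonnegative Radon measures. The measure $\int_Q\big(h_\alpha\delta_{a(X_\alpha)\cap\{d_S<0\}}-h_\alpha\delta_{b(X_\alpha)\cap\{d_S>0\}}\big){\rm d}\mathfrak q(\alpha)$ appearing in the representation formula is $\m$-singular, since the set of all ray endpoints is $\m$-null: each needle is a non-degenerate interval and $\m_\alpha=h_\alpha\,\mathcal H^1|_{X_\alpha}$ charges no point, while endpoints outside the transport set are negligible by Lemma~\ref{somelemma} and $\m(S)=0$. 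Taking $\m$-absolutely continuous parts in the Lebesgue decomposition of $[\mathbf{\Delta} d_S|_{X\setminus S}]^\pm$ then forces
\[
[\mathbf{\Delta} d_S|_{X\setminus S}]^+_{\rm reg}-[\mathbf{\Delta} d_S|_{X\setminus S}]^-_{\rm reg}=(\log h_\alpha)'\qquad\m\text{-a.e.},
\]
so $(\log h_\alpha)'$---hence also $-(\log h_\alpha)'$---coincides $\m$-a.e.\ with the measurable difference of two Radon--Nikodym densities, which completes the plan.
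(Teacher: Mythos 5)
The paper does not prove this statement: it is recalled verbatim from Cavalletti--Mondino \cite[Corollary~4.16]{cav-mon-lapl-18}, so there is no internal proof to compare against. Your plan faithfully reconstructs the architecture of their argument --- disintegration of $\m$ along the needles of $d_S$, the a.e.\ identity $\langle\nabla d_S,\nabla f\rangle\circ\gamma_\alpha=(f\circ\gamma_\alpha)'$ (correctly identified as the hard step and appropriately outsourced to that reference), fibrewise integration by parts with the endpoint bookkeeping driven by $d_S(\gamma_\alpha(a(X_\alpha)))\le 0\le d_S(\gamma_\alpha(b(X_\alpha)))$, and the observation that the atomic endpoint part is $\m$-singular --- so it is sound as a proof plan. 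One caution: with the paper's stated convention $T(f)=\int\langle\nabla u,\nabla f\rangle \dm$ (no minus sign), your integration by parts yields the negative of the displayed representation formula; the formula as written is consistent with the usual convention $T(f)=-\int\langle\nabla u,\nabla f\rangle \dm$ of \cite{cav-mon-lapl-18} and \cite{giglistructure}, so your hedge about the overall sign is warranted and in fact points to what appears to be a sign typo in the paper's definition of the distributional Laplacian rather than a defect in your argument.
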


To prove the rigidity asserted in Theorem~\ref{T:main3}, we need one more lemma:

\begin{Lemma}[Riccati comparison]\label{lem:riccati}
Let $u\colon [0,b]\rightarrow \R$ be non-negative and continuous such that $u''+\kappa u\leq 0$ in the distributional sense, $u(0)=1$ and $u'(0)\leq d$.
Let $v\colon [0,\bar b]\rightarrow \R$ be the maximal non-negative solution of $v''+\kappa v = 0$ with $v(0)=1$ and $v'(0)=d$. That is,
$v=s_{\kappa,-d}$ from~\eqref{equ:ks}.
Then $\bar b\geq b$ and $\frac{{\rm d}^+}{{\rm d}t} \log u\leq (\log v)' $ on $[0,b)$.
\end{Lemma}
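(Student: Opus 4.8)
The plan is to prove this by a standard Sturm-type/Riccati comparison argument, carefully handling the fact that $u$ is only assumed semiconcave (so $u$ has one-sided derivatives but may not be $C^1$). First I would reduce to the case where $u>0$ on all of $[0,b)$: since $u(0)=1>0$ and $u$ is continuous, $u$ is positive on a maximal interval $[0,b_0)$ with $b_0\le b$, and it suffices to work there; if $u$ vanishes at some interior point, $\frac{{\rm d}^+}{{\rm d}t}\log u$ is to be read as $-\infty$ from that point on and the inequality becomes vacuous, while the comparison $\bar b\ge b_0$ is what must be shown. On the reference side, $v=s_{\kappa,-d}$ is the explicit solution of $v''+\kappa v=0$, $v(0)=1$, $v'(0)=d$, defined and positive on a maximal interval $[0,\bar b)$ with $\bar b=r_{\kappa,-d}$ (using the trigonometric normal forms recalled after \eqref{trig ODE}).

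The core step is to compare the quotient $w:=u/v$ on the common interval of positivity. Formally, $w$ satisfies $(v^2 w')' = v^2 w'' + 2vv' w' = v(u'' - v''u/v) = v(u''+\kappa u)\le 0$ in the distributional sense, so $t\mapsto v^2 w'$ is nonincreasing, hence (since at $t=0$ we have $w(0)=1$ and $w'(0) = u'(0)v(0)-u(0)v'(0) = u'(0)-d\le 0$, using $u'(0)\le d$ in the one-sided sense) we get $v^2 w'\le 0$, so $w$ is nonincreasing, hence $w\le 1$, i.e.\ $u\le v$ on the common interval of positivity. Made rigorous at the level of one-sided derivatives: semiconcavity of $u$ (Remark~\ref{rem:kuconcave}) and smoothness of $v$ give that $g(t):= u(t)v'(t) - \frac{{\rm d}^+}{{\rm d}t}u(t)\, v(t)$ is well-defined, right-continuous, and its distributional derivative is $-v(u''+\kappa u)\ge 0$ (the cross terms cancel because $v$ solves the homogeneous ODE), so $g$ is nondecreasing; since $g(0)= v'(0) - \frac{{\rm d}^+}{{\rm d}t}u(0)\ge d - d = 0$ is nonnegative, $g\ge 0$ throughout, which is exactly $\frac{{\rm d}^+}{{\rm d}t}\log u \le (\log v)'$ on $[0,b_0)$. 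This inequality on logarithmic derivatives, once established on the maximal common interval, immediately forces $u$ to stay positive at least as long as $v$ does — because if $v$ is still positive at some $t_1$ while $u$ has reached $0$, then $\log u\to-\infty$ while its right derivative is dominated by the bounded quantity $(\log v)'$ near $t_1$, a contradiction — hence $b_0\le\bar b$, and combined with $b_0\le b$ and the original claim that we may assume $b\le\bar b$... more carefully: we conclude the positivity interval of $u$ reaches at least as far as that of $v$, giving $\bar b\ge b$ when $b$ is within the positivity range; if $u$ were to vanish before $\bar b$ this would contradict the log-derivative bound, so in fact $\bar b\ge b$ unconditionally.

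The main obstacle I expect is the rigorous manipulation of $g$ (or $v^2w'$) as a function of bounded variation whose distributional derivative has a sign: one must justify that for a semiconcave $u$ with $u''+\kappa u\le 0$ as a distribution, the product rule $\frac{{\rm d}}{{\rm d}t}\big(u v' - u' v\big) = u v'' - u'' v = -\kappa u v - u'' v = -v(u''+\kappa u)$ holds distributionally, and that a distribution which is $\le 0$ together with a right-continuous representative of the primitive being $\ge 0$ at $t=0$ forces the primitive to be $\ge 0$ everywhere. This is routine but needs the observation that $\frac{{\rm d}^+}{{\rm d}t}u$ is itself of bounded variation (being the right derivative of a semiconcave function) and that $v,v'$ are Lipschitz on compact subintervals, so all products are BV and integration by parts against test functions is legitimate. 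The endpoint bookkeeping — matching the convention $\frac{{\rm d}^+}{{\rm d}t}\log u = -\infty$ where $u=0$ and confirming $\bar b\ge b$ in all cases — is then a short argument once the monotonicity of $g$ is in hand.
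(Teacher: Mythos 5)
Your core argument for the logarithmic-derivative comparison is correct and is essentially the same Wronskian/Riccati computation as in the paper; the only methodological difference is that the paper mollifies $u$ (obtaining $u_\epsilon''+(\kappa+\delta)u_\epsilon\le 0$ classically, running the Wronskian identity for $u_\epsilon,v_\epsilon$, and passing to the limit using right-continuity of $\frac{{\rm d}^+}{{\rm d}t}u$), whereas you work directly with the BV function $g=uv'-\frac{{\rm d}^+}{{\rm d}t}u\cdot v$ and its distributional derivative. Both routes are legitimate; your version requires the Leibniz-rule bookkeeping you flag (note that the downward jumps of $u'$ contribute \emph{positively} to $g'$ since $v\ge 0$, so monotonicity of $g$ survives), while the paper's mollification avoids it at the cost of the $\delta(\epsilon)$ perturbation of $\kappa$.

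The genuine problem is your treatment of $\bar b\ge b$. The contradiction you invoke --- ``$\log u\to-\infty$ while its right derivative is dominated by the bounded quantity $(\log v)'$'' --- is not a contradiction: an \emph{upper} bound on $\frac{{\rm d}^+}{{\rm d}t}\log u$ only bounds $\log u$ from above and in no way prevents it from tending to $-\infty$ in finite time. Indeed the scenario you try to exclude actually occurs: take $\kappa=0$, $d=0$, so $v\equiv 1$ and $\bar b=\infty$, and $u(t)=1-t$ on $[0,1]$; here $u$ vanishes at $t=1$ while $v$ is still positive, with no contradiction and with $\bar b\ge b$ nonetheless true. Moreover, even if your argument were valid it would prove $b_0\ge\bar b$ (that $u$ outlives $v$), which is the wrong inequality --- you need that $v$ stays nonnegative throughout $[0,b]$. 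The correct deduction from your $g\ge 0$ is the opposite one: integrating $\frac{{\rm d}^+}{{\rm d}t}\log u\le(\log v)'$ gives $u\le v$ on the common positivity interval, so if $\bar b<b$ then $u(\bar b)=0$ at an \emph{interior} point of $[0,b]$; one must then separately rule out interior zeros of $u$, which follows from the semiconcavity/below-tangent estimate (a nonnegative distributional solution of $u''+\kappa u\le 0$ with $u(0)=1$ that touches zero at an interior point is forced below zero just beyond it). The paper avoids this entire discussion by obtaining both $b\le\bar b$ and $u\le v$ directly from Lemma~\ref{lem:key}, and you could simply do the same.
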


\begin{proof} Note that $v(r)=\cos_\kappa(r) + d\sin_\kappa(r)$ and $\bar b=r_{\kappa(N-1), -d(N-1),N}$. Then Lemma~\ref{lem:key} already yields that $b\leq \bar b$ and $u\leq v$ on $[0,b]$.
Therefore, without loss of generality we restrict $v$ to $[0,b]$.

We pick $\varphi \in C^{2}_c(\R)$ compactly supported in $(-1,1)$
with $\int \varphi \,{\rm d} \mathcal L^1 =1$ and define $\varphi_\epsilon(x)=\epsilon\varphi\big(\frac{x}{\epsilon}\big)$. Let $\bar\epsilon>{0}$ and $\epsilon\in (0,\bar\epsilon)$, and let $u_\epsilon=\int \varphi_\epsilon(t)u(s-t) \,{\rm d}t$ be the mollification of $u$ by $\varphi_\epsilon$. One can check that $u_\epsilon$ is well-defined on $[\bar\epsilon, b- \bar \epsilon]$ and $u_\epsilon \in C^2([\bar\epsilon, b-\bar\epsilon])$ satisfies
\begin{gather*}
u_\epsilon'' +(\kappa+\delta) u_\epsilon\leq 0
\end{gather*}
in the classical sense with $\delta=\delta(\epsilon)\rightarrow 0$ for $\epsilon \rightarrow 0$. Since $u$ is continuous, $u_\epsilon(t)\rightarrow u(t)$ for all $t\in [\bar\epsilon, b-\bar\epsilon]$. Moreover, $u_\epsilon'(t)\rightarrow u'(t)$ for every $t\in [\bar\epsilon, b-\bar\epsilon]$ where $u$ is differentiable.

Let $v_\epsilon\colon [0,\bar b_\epsilon]\rightarrow [0,\infty)$ be the maximal positive solution of $v_\epsilon''+(\kappa+\delta(\epsilon))v_\epsilon=0$ with $v_\epsilon(0)=1$ and $v_\epsilon'(0)=d$.
Since $\delta(\epsilon)\rightarrow 0$ for $\epsilon\rightarrow 0$ we have $\bar b_\epsilon \rightarrow \bar b$, $v_\epsilon \rightarrow v$ and $v'_\epsilon \rightarrow v$ pointwise on $[0,\bar b]$ if $\epsilon \rightarrow 0$.

We pick $\bar \epsilon \in (0,b)$ and $t\in [\bar \epsilon , b-\bar\epsilon]$ where $u$ is differentiable. Then
\begin{align*}
0&\geq \int_{\bar\epsilon}^t [ v_\epsilon (u_\epsilon''+ (\kappa+\delta)u_\epsilon) - u_\epsilon (v_\epsilon''+ (\kappa+ \delta) v_\epsilon) ]\,{\rm d}\mathcal L^1
 = \int_{\bar\epsilon}^t \{[ v_\epsilon u_\epsilon' ]' - [u_\epsilon v_\epsilon' ]' \}\,{\rm d}\mathcal L^1 \\
&= v_\epsilon(t) u_\epsilon'(t) - u_\epsilon(t) v_\epsilon'(t) + u_\epsilon (\bar\epsilon) v_\epsilon'(\bar\epsilon) - v_\epsilon(\bar\epsilon) u_\epsilon'(\bar\epsilon)
\\
&
\rightarrow v(t) u'(t) - u(t) v'(t) + u (\bar\epsilon) v'(\bar\epsilon) - v(\bar\epsilon) u'(\bar\epsilon).
\end{align*}
Since $u$ is semiconcave and continuous on $[0, b]$, the right derivative $\frac{{\rm d}^+}{{\rm d}t}u\colon [0,b]\rightarrow \R\cup\{\infty\}$ is continuous from the right.
Hence, for $\bar\epsilon \downarrow 0$ and any $t\in (0,b)$ it follows
\begin{gather*}
0\geq v(t) \frac{{\rm d}^+}{{\rm d}t}u(t) - u(t) v'(t) + u(0) v'(0) - v(0) \frac{{\rm d}^+}{{\rm d}t}u(0)\geq v(t) \frac{{\rm d}^+}{{\rm d}t}u(t) - u(t) v'(t).
\end{gather*}
 Hence $\frac{{\rm d}^+}{{\rm d}t}\log u= \frac{\frac{{\rm d}^+}{{\rm d}t}u}{u}\leq \frac{v'}{v}=(\log v)'$ as desired.
\end{proof}

We obtain the following improved Laplace comparison statement for distance functions in $\CDD(K,N)$ spaces that may be of interest in its own right. In the smooth context the result was obtained by Kasue \cite[Corollary~2.44]{kasue-laplace} for Riemannian manifolds and by Sakurai \cite[Lemma~3.3]{sakurai} for weighted Riemannian manifolds. For weighted Finsler manifolds that satisfy a lower Bakry--\'Emery Ricci curvature bound in the sense of~\cite{ohtafinsler1} the result seems to be new.
\begin{Corollary}[improved Laplace comparison]\label{cor:laplace}
Let $(X,d,\m)$ be an essentially nonbranching $\CDD(K',N)$ space with $K'\in \R$, $N\in (1,\infty)$ and $\supp \m=X$. For $K, \kkapp\in \R$, let $\Omega\subset X$ be closed with $\Omega \neq X$, $\m(\Omega)>0$ and $\m(\partial \Omega)=0$ such that $\Omega$ satisfies $\CDD_r(K,N)$ and $\partial \Omega=S$ has finite inner curvature. Assume the inner mean curvature $H_S^-$ satisfies $H_S^-\geq (N-1)\kkapp$ $\m_S$-almost everywhere. Then
\begin{gather*}
({\bf \Delta} d_{\Omega^c})|_{\Omega^\circ}\leq (N-1) \frac{s'_{\frac{K}{N-1}, \kkapp}(d_{\Omega^c})}{s_{\frac{K}{N-1}, \kkapp}(d_{\Omega^c})}\m|_{\Omega^{\circ}},
\end{gather*}
where $s_{\kappa,\lambda}$ was defined in equation~\eqref{equ:ks}.
\end{Corollary}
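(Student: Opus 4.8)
The plan is to pass through the needle decomposition induced by $u = d_S$, exactly as in the proof of Theorem~\ref{T:main}, and to combine the Riccati comparison of Lemma~\ref{lem:riccati} with the structure of $\boldsymbol{\Delta}d_S|_{X\backslash S}$ provided by Theorem~\ref{thm:cm}. First I would recall that, for $\mathfrak q$-a.e.\ $\alpha \in Q^\dagger$, the conditional density $h_\alpha$ on the needle $X_\alpha$ restricted to $\Omega$ satisfies $\big(h_\alpha^{1/(N-1)}\big)'' + \frac{K}{N-1} h_\alpha^{1/(N-1)} \le 0$ in the distributional sense on $(a(X_\alpha),0)$, together with $h_\alpha(0) > 0$ and $\frac{{\rm d}^-}{{\rm d}r}\big|_{r=0}\log h_\alpha = H_S^-(\gamma_\alpha(0)) \ge (N-1)\chi$. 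Reparametrising the needle by $t \mapsto \gamma_\alpha(-t)$ so that the endpoint nearest $\Omega^c$ sits at $t=0$ and we look outward into $\Omega$, I would set $u_\alpha(t) := \big(h_\alpha(-t)/h_\alpha(0)\big)^{1/(N-1)}$; this is non-negative, continuous, satisfies $u_\alpha'' + \frac{K}{N-1}u_\alpha \le 0$ distributionally, $u_\alpha(0)=1$, and $u_\alpha'(0) = -\frac{1}{N-1}\frac{{\rm d}^-}{{\rm d}r}\big|_{r=0}\log h_\alpha \le -\chi$.

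Applying Lemma~\ref{lem:riccati} with $\kappa = \frac{K}{N-1}$ and $d = -\chi$, the comparison solution is $v = s_{\frac{K}{N-1},\chi}$ (since $s_{\kappa,-d} = s_{\kappa,\chi}$ in the notation of \eqref{equ:ks}), and the lemma gives $\frac{{\rm d}^+}{{\rm d}t}\log u_\alpha(t) \le (\log v)'(t) = \frac{s'_{\frac{K}{N-1},\chi}(t)}{s_{\frac{K}{N-1},\chi}(t)}$ on $[0, -a(X_\alpha))$. Unwinding the reparametrisation, this says that along the needle, written in the original orientation with $d_{\Omega^c}(\gamma_\alpha(r)) = -r$, one has $(\log h_\alpha)'(r) \le (N-1)\frac{s'_{\frac{K}{N-1},\chi}(d_{\Omega^c})}{s_{\frac{K}{N-1},\chi}(d_{\Omega^c})}$ for a.e.\ $r$ on the needle inside $\Omega^\circ$ (the left/right derivative distinction being immaterial $\mathcal{L}^1$-a.e.\ by semiconcavity of $h_\alpha^{1/(N-1)}$). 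Integrating against the disintegration $\mathfrak q$, this pointwise (along needles) bound becomes the $\m$-a.e.\ bound $(\log h_\alpha)' \le (N-1)\frac{s'_{\frac{K}{N-1},\chi}(d_{\Omega^c})}{s_{\frac{K}{N-1},\chi}(d_{\Omega^c})}$ on $\Omega^\circ$.

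Now I would invoke Theorem~\ref{thm:cm}: since $d_S|_{\Omega^\circ} = -d_{\Omega^c}$, we have $\boldsymbol{\Delta}d_{\Omega^c}|_{\Omega^\circ} = -\boldsymbol{\Delta}d_S|_{\Omega^\circ}$, and the representation formula shows that on $\Omega^\circ$ the functional $\boldsymbol{\Delta}d_S|_{X\backslash S}$ has no boundary terms of the form $h_\alpha \delta_{a(X_\alpha)}$ contributing positively inside $\Omega^\circ$ — the $\delta_{a(X_\alpha)\cap\{d_S<0\}}$ atoms occur at the endpoints $a(X_\alpha)$, which by the finite inner curvature hypothesis ($\m(B^\dagger_{\rm in})=0$, so $\gamma_\alpha(a(X_\alpha))\notin\Omega^\circ$ for $\mathfrak q$-a.e.\ $\alpha$) lie outside $\Omega^\circ$, hence contribute nothing to $\boldsymbol{\Delta}d_S|_{\Omega^\circ}$; likewise the $-h_\alpha\delta_{b(X_\alpha)}$ atoms are non-positive and in any case the needle portion in $\Omega^\circ$ sits on $\{d_S \le 0\}$. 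Thus $\boldsymbol{\Delta}d_S|_{\Omega^\circ} = (\log h_\alpha)'\,\m|_{\Omega^\circ}$ plus a non-positive remainder, so $\boldsymbol{\Delta}d_{\Omega^c}|_{\Omega^\circ} = -(\log h_\alpha)'\,\m|_{\Omega^\circ} - (\text{non-negative measure})$, and combining with the pointwise bound just derived gives the claimed inequality $\boldsymbol{\Delta}d_{\Omega^c}|_{\Omega^\circ} \le (N-1)\frac{s'_{\frac{K}{N-1},\chi}(d_{\Omega^c})}{s_{\frac{K}{N-1},\chi}(d_{\Omega^c})}\m|_{\Omega^\circ}$.

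The main obstacle I anticipate is the bookkeeping in the last step: being careful about which atomic terms in the Cavalletti–Mondino representation formula actually live in $\Omega^\circ$, and in what sense the $\m$-a.e.\ density bound on $(\log h_\alpha)'$ combines with the distributional/measure inequality (one needs that $-(\log h_\alpha)'$ is the $\m$-absolutely continuous part of $\boldsymbol{\Delta}d_{\Omega^c}|_{\Omega^\circ}$ while the singular part is non-positive, which is precisely the content of the decomposition stated in Theorem~\ref{thm:cm}). The analytic heart — turning the differential inequality \eqref{vvv} plus the mean-curvature lower bound into a Riccati-type derivative bound — is already packaged in Lemma~\ref{lem:riccati}, so that part should be routine.
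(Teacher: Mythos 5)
Your proposal follows the paper's own proof essentially step for step: the needle decomposition for $d_S$, the reversed and normalized density $u_\alpha(t)=\big(h_\alpha(-t)/h_\alpha(0)\big)^{1/(N-1)}$, Lemma~\ref{lem:riccati} with $\kappa=K/(N-1)$ and $d=-\kkapp$ producing the comparison function $s_{K/(N-1),\kkapp}$, and the representation formula of Theorem~\ref{thm:cm} to reduce the measure inequality to a pointwise bound on the logarithmic derivative of the density. Two details are wrong as written, though neither affects the architecture. First, the ``unwinding'' has the wrong sign: Lemma~\ref{lem:riccati} bounds $\frac{{\rm d}^+}{{\rm d}t}\log\tilde h_\alpha$, which in the original orientation is $-(\log h_\alpha)'$, so the pointwise conclusion is $-(\log h_\alpha)'(r)\le (N-1)\,s'_{K/(N-1),\kkapp}(d_{\Omega^c})/s_{K/(N-1),\kkapp}(d_{\Omega^c})$, not the inequality you display for $(\log h_\alpha)'$. (In the Euclidean unit ball $(\log h_\alpha)'(r)=\frac{n-1}{1+r}>0$ while the right-hand side equals $-\frac{n-1}{1-d_{\Omega^c}}<0$, so your intermediate line is false as stated.) Since ${\bf \Delta} d_{\Omega^c}|_{\Omega^\circ}$ equals $-(\log h_\alpha)'\,\m|_{\Omega^\circ}$ plus a nonpositive singular part, it is exactly the bound on $-(\log h_\alpha)'$ that is needed, and your final combination is correct once this sign is fixed.

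Second, finite inner curvature does not imply $\gamma_\alpha(a(X_\alpha))\notin\Omega^\circ$: in the unit ball every needle starts at the centre, which is interior. The hypothesis $\m\big(B^\dagger_{\rm in}\big)=0$ controls the \emph{other} endpoint -- $\mathfrak q$-a.e.\ needle must reach $\overline{\Omega^c}$, so $b(X_\alpha)\ge 0$ and the atoms $-h_\alpha\delta_{b(X_\alpha)\cap\{d_S>0\}}$, which become positive after negating to pass from ${\bf\Delta}d_S$ to ${\bf\Delta}d_{\Omega^c}$, sit outside $\Omega^\circ$. The $a(X_\alpha)$ atoms may well lie in $\Omega^\circ$, but they enter ${\bf\Delta}d_S$ with a $+$ sign and hence ${\bf\Delta}d_{\Omega^c}=-{\bf\Delta}d_S$ with a $-$ sign, so they are harmless for an upper bound; this is the argument the paper actually makes. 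Finite inner curvature is also what ensures that almost every needle has a boundary point $\gamma_\alpha(0)$ at which the mean curvature hypothesis supplies the initial condition $u_\alpha'(0)\le-\kkapp$ for the Riccati comparison. With these two corrections your proof coincides with the paper's.
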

\begin{proof}
Let $u=d_{S}$ be the signed distance function of $S$. Let $\{X_\alpha\}_{\alpha\in Q}$ be the decomposition of~$\T_u$ and $\int \m_\alpha \,{\rm d}\mathfrak q(\alpha)$ be the disintegration of $\m$ given by Theorem~\ref{T:CM disintegration}
and Remark~\ref{R:zero-level selection}. Recall that $m_\alpha=h_\alpha \mathcal H^1$ for $\mathfrak q$-a.e.\ $\alpha\in Q$.
We consider $Q^\dagger\subset Q$ that has full $\mathfrak q$-measure as defined in Remark~\ref{dagger}. For every $\alpha\in Q^{\dagger}$ we have that
$\m_\alpha = h_\alpha \mathcal H^1$,
$X_{\alpha,e}=\overline{X}_\alpha$ and $h_\alpha$ is continuous on $[a(X_\alpha),0]$ by Remark \ref{rem:extden} and satisfies
\begin{gather}\label{inequality}
\big(h_\alpha^{\frac{1}{N-1}}\big)'' + \frac{K}{N-1} h_\alpha^{\frac{1}{N-1}}\leq 0 \qquad \mbox{on} \ (a(X_\alpha),0) \qquad \forall\, \alpha\in Q^{\dagger},
\end{gather}
in the distributional sense. Note that we have the constant $K$ because of Remark~\ref{rem_mer}. As usual we write $h_\alpha=h_\alpha\circ \gamma_\alpha$. We also have the properties of $h_\alpha$ as discussed in Remark~\ref{rem:kuconcave}. The function $r\in [0,-a(X_\alpha)]\mapsto \tilde h_\alpha(r) := h_\alpha(-r)$ is also continuous and~\eqref{inequality} is still holds on $(0,-a(X_\alpha))$.
Recall that by the lower mean curvature bound the set of $\alpha$'s in $Q$ with $h_\alpha(0)=0$ has $\mathfrak q$-measure $0$. Hence $h_\alpha(r)>0$ for $\mathfrak q$-a.e.~$\alpha$.

Therefore, for $\mathfrak q$-a.e.\ $\alpha\in Q^{\dagger}$ we have that
$\big[\tilde h_\alpha(r)/\tilde h_\alpha(0)\big]^\frac{1}{N-1}=:u(r)$ satisfies $u''(r)+\frac{K}{N-1} u(r)\leq 0$ in the distributional sense with $u(0)=1$.
Moreover, we have
\begin{gather*}
\kkapp(N-1)\leq H^-_S(\gamma_\alpha(0)) =\frac{{\rm d}^-}{{\rm d}r}\Big|_{r=0}\log h_\alpha(r) =- \frac{{\rm d}^+}{{\rm d}r}\Big|_{r=0}\log \tilde h_\alpha(r)
\end{gather*}
and therefore
\begin{gather*}
 \frac{{\rm d}^+}{{\rm d}r}\Big|_{r=0} u(r) = \frac{{\rm d}^+}{{\rm d}r}\Big|_{r=0} \left[\frac{\tilde h_\alpha(r)}{\tilde h_\alpha(0)}\right]^{\frac{1}{N-1}}\leq - \kkapp.
\end{gather*}

By Theorem \ref{thm:cm}, $d_S\in D({\bf \Delta}, X\backslash S)$ and
\begin{gather*}
({\bf \Delta} d_S)|_{X\backslash S}= (\log h_\alpha)' \m|_{X\backslash S}+\int_Q ( h_{\alpha}\delta_{a(X_{\alpha})\cap \{d_S{\color{black}<}0\}} - h_{\alpha}\delta_{b(X_{\alpha})\cap \{d_S{\color{black}>}0\}} ) \,{\rm d}\mathfrak q(\alpha).
\end{gather*}
Recall that $-d_S|_{\Omega^\circ}=d_{\Omega^c}|_{\Omega^\circ}$ and by locality of the distributional Laplacian $(({\bf \Delta}d_{S})|_{X\backslash S})|_{\Omega^\circ}= {\bf \Delta} (d_{S}|_{\Omega^\circ})= {\bf \Delta} (-d_{\Omega^{c}}|_{\Omega^\circ})$. Hence
\begin{gather*}
{\bf \Delta}(- d_{\Omega^{c}}|_{\Omega^\circ})= (\log h_\alpha)' \m|_{\Omega^\circ}
+
\int_Q {(h_{\alpha}\delta_{a(X_{\alpha})\cap \{d_S < 0\}} -
h_{\alpha}\delta_{b(X_{\alpha})\cap \{d_S >0\}} } \,{\rm d}\mathfrak q(\alpha).
\end{gather*}
Any $\gamma_{\alpha}$ for $\alpha\in Q$ that starts inside of $\Omega^\circ$ satisfies $\gamma_\alpha(b(X_\alpha))\in \overline{\Omega^c}$. Hence $\int_Q h_{\alpha}\delta_{b(X_{\alpha})\cap\Omega^{\circ}} \,{\rm d}\mathfrak q(\alpha)\allowbreak =0$.
Recall that $-(\log h_\alpha)'(r)=(\log \tilde h_\alpha)'(-r)$. It follows that
\begin{gather*}
({\bf \Delta} d_{\Omega^c} )|_{\Omega^\circ}= -{\bf \Delta}(- d_{\Omega^c})|_{\Omega^\circ}
 \leq - (\log h_\alpha)' \m|_{\Omega^\circ}= (\log \tilde h_\alpha)' \m|_{\Omega^\circ}.
\end{gather*}
In the first equality it seems as one would use linearity of the Laplacian to pull the minus sign in front of ${\bf \Delta}$. But if one examines the proof of Theorem \ref{thm:cm}, one can easily observe that this is possible also in $\CDD$ (or even $\MCP$ context) because replacing $d_S$ with~$-d_S$ just results in reversing the parametrization of the geodesics $\gamma_\alpha$.

Now the corollary follows immediately from the curvature bounds and the Riccati comparison lemma (Lemma~\ref{lem:riccati}).
\end{proof}

\subsection{Proof of rigidity (Theorem \ref{T:main3})}

1.\ We assume that $K \in \{N-1,0,-(N-1)\}$.

Now $d_S|_{{\Omega^\circ}}=-d_{\Omega^c}|_{{\Omega^\circ}}\in D({\bf \Delta}, {\Omega^\circ})$ and by Corollary \ref{cor:laplace} we have
\begin{align}\label{yyyy}
\frac{1}{N-1}{\bf \Delta} d_{\Omega^c}|_{{\Omega^\circ}}\leq \frac{s_{\frac{K}{N-1}, \kkapp}'(d_{\Omega^c})}{s_{\frac{K}{N-1}, \kkapp}(d_{\Omega^c})} \m|_{\Omega^\circ}=
\frac{- \frac{K}{N-1}\sin_{\frac{K}{N-1}} d_{\Omega^c} - \kkapp \cos_{\frac{K}{N-1}} d_{\Omega^c}}{\cos_{\frac{K}{N-1}} d_{\Omega^{c}}- \kkapp\sin_{\frac{K}{N-1}} d_{\Omega^c}} \m|_{{\Omega^\circ}}.
\end{align}
Assume equality holds in the inradius bound \eqref{inradbound}, meaning $d_{\Omega^c}(p)=r_{K,\kkapp(N-1),N} <\infty$ for some $p\in {\Omega^\circ}$ by Remark~\ref{R:Heine-Borel}. In particular, there exists a geodesic $\gamma^*\colon [a,b]\rightarrow \Omega$ of length ${L}(\gamma^*)=d(\gamma^*(a),\gamma^*(b))={r_{K,\kkapp(N-1),N}}$
 such that $\gamma^*(a)=p$ and $\gamma^*(b)\in S:= \partial \Omega$. Moreover $B_{r_{K,\kkapp(N-1),N}}(p)\subset {\Omega^\circ}$ and
\begin{align}\label{xxxx}
\frac{1}{N-1}{\bf \Delta} d_p|_{{\Omega^\circ\backslash\{p\}}}\leq\frac{\cos_{\frac{K}{N-1}} d_p}{\sin_{\frac{K}{N-1}} d_p} \m|_{{\Omega^\circ\backslash\{p\}}}
\end{align}
by the Laplace comparison theorem \cite[Corollary 5.15]{giglistructure} (heuristically the limit $\kkapp \to \infty$ in inequality \eqref{yyyy};
see also \cite[Theorem~1.1]{cav-mon-lapl-18}).

We will add the previous inequalities \eqref{yyyy} and \eqref{xxxx}.
We first note that \begin{gather*}
\sin_{\frac{K}{N-1}} (d_p) \big[\cos_{\frac{K}{N-1}} d_{\Omega^c} - \kkapp\sin_{\frac{K}{N-1}} d_{\Omega^c}\big]=
\sin_{\frac{K}{N-1}} (d_p)\big[ s_{\frac{K}{N-1}, \kkapp}(d_{\Omega^c})\big]> 0 \qquad \mbox{$\m$-a.e.\ on }\Omega^\circ.
\end{gather*}
This is true because on the one hand $J_{K,\kkapp(N-1),N}(d_{\Omega^c})=(s_{{K}/{(N-1)},\kkapp}(d_{\Omega^c}))^{N-1}>0$ $\m$-a.e.\ on~$\Omega^\circ$. The latter is easy to see
using the disintegration induced by $d_{\Omega^c}$ on $\Omega^\circ$ and Theorem~\ref{T:main}. For the other factor recall when $K=N-1>0$ that $d_p\leq \pi$ for any $p\in X$ by the Bonnet--Myers diameter estimate (e.g.,~\cite{stugeo2}) with at most one point $q\neq p$ where $d_p(q)=\pi$~\cite{ohtmea}.

Adding the inequalities \eqref{yyyy} and \eqref{xxxx} and using the linearity of the Laplace operator yields
\begin{gather*}
 \big(\sin_{\frac{K}{N-1}} (d_p)\big[ s_{\frac{K}{N-1}, \kkapp}(d_{\Omega^c})\big]\big)\big|_{\Omega^\circ\backslash \{p\}}\frac{{\bf \Delta} (d_p+d_{\Omega^c})}{N-1}\Big|_{{\Omega^\circ\backslash\{p\}}} \\
\qquad {} \leq
\left(\big[\cos_{\frac{K}{N-1}}d_p\big]\big[ \cos_{\frac{K}{N-1}} d_{\Omega^c}\big]-\big[ \sin_{\frac{K}{N-1}} d_p \big]\frac{K}{N-1} \big[\sin_{\frac{K}{N-1}} d_{\Omega^c} \big]\right)\m|_{{\Omega^\circ\backslash\{p\}}}
\\
\qquad\quad{} - \kkapp \big(\big[\sin_{\frac{K}{N-1}} d_p\big]\big[ \cos_{\frac{K}{N-1}} d_{\Omega^c}\big]+\big[\cos_{\frac{K}{N-1}} d_p\big] \big[\sin_{\frac{K}{N-1}} d_{\Omega^c} \big]\big) \m|_{{\Omega^\circ\backslash\{p\}}}\\
\qquad{} = \big( {\cos_{\frac{K}{N-1}} (d_p +d_{\Omega^c})-\kkapp \sin_{\frac{K}{N-1}} (d_p+ d_{\Omega^c})}\big)\m|_{{\Omega^\circ\backslash\{p\}}}\\
\qquad{} =
{s_{\frac{K}{N-1},\kkapp}(d_p+d_{\Omega^c})}\m|_{\Omega^\circ\backslash\{p\}}
\leq 0.
\end{gather*}
The last inequality follows from $r_{K,\kkapp(N-1),N}\leq d_p+d_{\Omega^c}\leq 2 r_{K,\kkapp(N-1),N}$ by the triangle inequality, the definition of $r_{K,\kkapp(N-1),N}$,
the period $<2r_{K,\kkapp(N-1),N}$ of the sinusoid $s_{\frac{K}{N-1},\kkapp}$, and Theorem~\ref{T:main}.

Hence $d_p+ d_{\Omega^c}$ on ${\Omega^\circ\backslash \{p\}}$ is a super-harmonic function that attains its minimum $r_{K,\kkapp(N-1),N}$ inside ${\Omega^\circ}\backslash \{p\}$ along the geodesic $\gamma^*$.
Therefore, $d_p+d_{\Omega^c}=r_{K,\kkapp(N-1),N}$ on the connected component $\Omega^*$ of $\gamma^*$ in ${\Omega^\circ}\backslash \{p\}$ by the strong maximum principle (Theorem~\ref{thm:mp}).

In particular, it follows that ${\Omega^*}\subset B_{r_{K,\kkapp(N-1),N}}(p)$ and $a(X_\alpha)=-r_{K,\kkapp(N-1),N}$ for $\mathfrak q$-a.e.\ $\alpha\in Q$ such that $\mbox{Im}\gamma_\alpha$ intersects with $\Omega^*$ (and therefore $\gamma_\alpha(I_\alpha)\subset \Omega^*$). Moreover, both inequalities \eqref{yyyy} and \eqref{xxxx} are saturated throughout ${\Omega^*}$:
\begin{gather*}
{\bf \Delta} d_{\Omega^c}|_{\Omega^*} =
(N-1)\frac{- \frac{K}{N-1}\sin_{\frac{K}{N-1}} d_{\Omega^c} - \kkapp \cos_{\frac{K}{N-1}} d_{\Omega^c}}{\cos_{\frac{K}{N-1}} d_{\Omega^{c}}- \kkapp\sin_{\frac{K}{N-1}} d_{\Omega^c}} \m|_{{\Omega^*}}.
\end{gather*}
 By Theorem \ref{thm:cm},
\begin{align*}
({\bf \Delta}d_p)\circ \gamma_\alpha(r) = -( {\bf \Delta}d_{\Omega^c})\circ \gamma_\alpha(r)= (\log h_\alpha)'\circ \gamma_\alpha(r)
\end{align*}for $r\in [-r_{K,\kkapp(N-1),N},0)$ and $\mathfrak q$-a.e.\ $\alpha\in Q$.
Recall that $(\log h_\alpha)'\circ \gamma_\alpha(r)$ is in fact given by $(\log h_\alpha)'(r)$ for the density $h_\alpha$ of $\m_\alpha$ {with respect to} $\mathcal L^1$ on $[a(X_\alpha),b(X_\alpha)]$.

Solving the resulting ODE for $h_\alpha$ yields
\[
h_\alpha(r)=h_\alpha(0) J_{K,\kkapp(N-1),N}(-r)\qquad \mbox{for $r\in [-r_{K,\kkapp(N-1),N},0)$ and $\mathfrak q$-a.e.\ $\alpha\in Q$}.
\]
Proportionality of $s_{\frac{K}{N-1},\kkapp}(r_{K,\kkapp(N-1),N}-r)$ to $\sin_{\frac{K}{N-1}}(r)$ therefore provides $\lambda>0$ such that
\begin{gather*}
\m(B_R(p)\cap \Omega^*)=\lambda \int_0^R (\sin_{\frac{K}{N-1}}r)^{N-1} \,{\rm d}r \qquad \forall\, R\in [0,r_{K,\kkapp(N-1),N}].
\end{gather*}
Hence $B_{r_{K,\kkapp(N-1),N}}(p)\cap \Omega^*= \Omega^*$ is a volume cone in the sense of~\eqref{volumecone}.

2.\ We will show that $\Omega^\circ\backslash \{p\}$ is connected. We argue by contradiction and outline the idea first. We can construct explicitly a Wasserstein geodesic between $\delta_{q}$ for a point $q \in \Omega^{\circ}\backslash (\Omega^*\cup \{p\})$ and another $\m$-absolutely continuous measure concentrated in $\Omega^*$. If $\Omega^\circ\backslash \{p\}$ is not connected but $\Omega^\circ$ is, this Wasserstein geodesic is concentrated on branching geodesics unless the metric measure space is isomorphic to an interval equipped with a measure that has a non-negative and semiconcave density w.r.t.\ $\mathcal L^1$. Since the former contradicts the essentially non-branching property of $\RCRD$ spaces, the latter must hold. But in this case the volume cone property of $\m$ on $\Omega^*$ contradicts that $\supp \m=X$ and that the $\mathcal L^1$ density of $\m$ must be positive in the interior of the interval because of semiconcavity.

Now, we give the precise construction.
Assume $\Omega^\circ\backslash \{p\}$ is not connected. Then we can pick ${q'}\in \Omega^{**}:=(\Omega^\circ\backslash \{p\}) \backslash (\Omega^*)$. One can see that $\Omega^{**}$ is open. Since $\Omega^{**}$ is open, it has positive measure. Thus we can pick $q'$ as above such that there is a unique arclength parameterized geodesic $r\in [0, d(p,q')] \mapsto \hat\gamma$ between $p =\gamma(0)$ and $q'$, and such that $\hat \gamma$ is inside $\Omega$. Choose $\delta>0$ small enough such that $B_{2\delta}(p)\subset \Omega$ and set $q:=\hat \gamma(\delta)$.

The set $\Omega^*\cap B_\delta(p)$ is open and hence has positive $\m$ measure. Let $\mu_0=\frac{1}{\m(\Omega^*\cap B_{\delta}(p))} \m|_{\Omega^*\cap B_\delta(p)}$ and $(\mu_t)_{t\in[0,1]}$ be the Wasserstein geodesic between $\mu_0$ and $\delta_q=\mu_1$.
Since we assume that $\Omega^{\circ} =\Omega^*\cup \Omega^{**}\cup \{p\} $ is connected, but $\Omega^*\cup \Omega^{**}=\Omega^{\circ}\backslash \{p\}$ is not connected, we have that the unique $L^2$-Wasserstein geodesic $(\mu_t)_{t\in [0,1]}$ (for uniqueness see \cite{Mon-Cav-17} for instance) must be given by $\mu_t= (e_t)_{\#}\Pi$
with $\Pi\in \mathcal P(\mathcal G(X))$ supported on geodesics that are reparametrized concatenations of the geodesic segments $r\in [0,\delta]\mapsto \gamma_\alpha(-r_{K, \kkapp(N-1), N}+\delta -r)$ and $\hat \gamma|_{[0,\delta]}$.
(Note that any geodesic that connects $q$ with a point in $\Omega^*$ must stay in $B_{2\delta}(p)\subset \Omega$ and since we assume $\Omega\backslash \{p\}$ is not connected, such geodesics must go through $p$. And for $\m$-a.e.\ $x$ in $\Omega^*\cap B_{\delta}(p)$ the unique geodesic that connects $x$ with $p$ is given by the restriction of some geodesic $\gamma_\alpha$ for $\alpha\in Q$.)

But $\RCRD$ spaces are essentially non-branching (see also Remark~\ref{remark_new} below). Hence it follows that $\Pi$ must be concentrated on non-branching geodesics. Hence, there exists a~single geodesic~$\tilde\gamma$ such that
$\Pi$ is concentrated on~$\tilde \gamma$. In particular $(\Omega^*\cap B_\delta(p), \m|_{\Omega^*\cap B_\delta(p)})$ is isomorphic to an interval. Then by \cite[Theorem~1.1]{kila} $(X,d,\m)$ is isomorphic to a $1$-dimensional manifold. Let us assume $p=0\in \R$ and $q=-d(0,q)=-\delta\in \R$ and $\Omega^*\subset (0,\infty)$. By the volume cone property that we proved in the previous step we have $\dm|_{\Omega^*}(r)$ is proportional to $\sin_{{K}/{(N-1)}}^{N-1}|r|\,{\rm d}r$. Since the density of $\m$ w.r.t.\ $\mathcal L^1$ must be semi-concave \cite[Theorem~1.1]{kila} and positive for interior points but must vanish at the origin, we have a contradiction.
Hence $\Omega^{\circ}\backslash \{p\}=\Omega^*$ is connected.

3.\ We can finish the proof of the main theorem by application of Theorem~\ref{thm:gp}. Recall that Theorem~\ref{thm:gp} provides a measure space isomorphism $U$ between $(\Omega^\circ,\m|_{\Omega^\circ})$ and a truncated cone such that~$U$ and its inverse are locally distance preserving. This yields that $U$ must be an isometry with respect to the induced intrinsic distances. Though this conclusion might be obvious to experts, we provide the proof in the following.

We observe first that
$S=\partial B_{r_{K,\kkapp(N-1),N}}(p)$ must contain more than two points:
our hypotheses rule out $S \subset \{pt\}$, while if $S$ consists of precisely two points,
Remark~\ref{rem:refined0} asserts $N=1$ and that
is also excluded by assumption.

Hence, only the last case in Theorem \ref{thm:gp} {remains relevant to us}.
By Remark \ref{rem:refined} there exist local isometries $ U$ and $ V$ between $\Omega^{\circ}=B_{r_{K,\kkapp(N-1),N}}(p)$ and the ball $B_{r_{K,\kkapp(N-1),N}}(o)$ in the corresponding cone that are also measure preserving bijections.

Now, it is standard knowledge that ${U}$ is an isometry with respect to the induced intrinsic distances.

Let us be more precise. Set $r_{K,\kkapp(N-1),N}=R$ and let $\tilde d_\epsilon$ be the induced intrinsic distance on $\bar B_{R-\epsilon}(p)$. We denote by $d^*$ the cone (or suspension distance) and by $\tilde d^*$ and $\tilde d^*_\epsilon$ the induced intrinsic distances of $B_R(o)$ and $\bar B_{R-\epsilon}(o)$, respectively. Then $ U$ is an isometry between $\bar B_{R-\epsilon}(p)$ and $\bar B_{R-\epsilon}(o)$ with respect to the induced intrinsic distances.

To prove this let $\gamma\colon [0,1]\rightarrow \bar B_{R-\epsilon}(p)$ be a geodesic with respect to $\tilde{d}_\epsilon$ between $x,y\in \bar B_{R-\epsilon}(p)$. We can
divide $\gamma$ into $k\in \mathbb N$ small pieces $\gamma|_{[t_{i-1},t_i]}$ with $i=1,\dots, k$ and $t_0=0$, $t_{k}=1$ such that each piece stays inside a~small ball that is mapped isometrically {with respect to} $d$ via
$ U$ to a~small ball in $B_R(o)$. We obtain
\begin{align*}
\sum_{i=1}^{k} d^*({U}(\gamma(t_{i-1})),{U}(\gamma(t_i)))=\sum_{i=1}^k d(\gamma(t_{i-1}),\gamma(t_i))\leq \sum_{i=1}^k\tilde d_\epsilon(\gamma(t_{i-1}), \gamma(t_i))= \tilde d_{\epsilon}(x,y).
\end{align*}
The first equality holds because $ U$ is an isometry {with respect to} $d$ and $d^*$ on the small balls that contain $\gamma|_{[t_{i-1},t_i]}$.
The last equality holds because $\gamma$ is geodesic {with respect to} $\tilde d_\epsilon$ and the inequality holds because the intrinsic distance is always equal or larger than $d$ itself.

On the left hand side we can take the supremum {with respect to} all such subdivisions $(t_i)_{i=0,\dots,k-1}$. This yields $\tilde{d}_{\epsilon}^*({U}(x),{U}(y))\leq L({U}\circ \gamma)\leq \tilde d_\epsilon(x,y)$ where $L(U\circ \gamma)$ is the length of the continuous curve $U\circ \gamma$. In particular ${U}\circ \gamma$ is a rectifiable curve{(that means has finite length)} in $\bar B_{R-\epsilon}(o)$.

We can argue in the same way for the inverse map ${V}$ and obtain that $U\colon \bar B_{R-\epsilon}(p)\rightarrow \bar B_{R-\epsilon}(o)$ is an isometry with respect to the induced intrinsic distances $\tilde d_\epsilon$ and $\tilde d^*_\epsilon$.

Finally, we let $\epsilon\rightarrow 0$ and observe that $\tilde d_\epsilon \rightarrow \tilde d$ on $\bar B_{R-\epsilon}(p)$ and the same for $\tilde d^*_\epsilon$ and $\tilde d^*$. This finishes the proof.

\begin{Remark}\label{remark_new}
A deep new result by Qin Deng \cite{qin} shows that $\RCRD$ spaces are in fact non-branching. In this case the middle step of the previous proof simplifies: If $\Omega^\circ\backslash \{p\}$ is not connected but $\Omega^\circ$ is connected, this yields almost immediately the existence of a branching geodesic unless the space is isomorphic to an interval equipped with a measure. The proof of Deng's result is quite long and involved. Therefore we provide a proof that only relies on the relatively weak property that the space is essentially non-branching.
\end{Remark}

\appendix
\section{Substituting measure contraction for lower Ricci bounds}\label{S:MCP}

In this appendix we will sketch why the results of Theorem~\ref{T:main} also hold when one replaces the condition $\CDD(K,N)$ with the weaker measure contraction property $\MCP(K,N)$ that was introduced in \cite{ohtmea, stugeo2}. We will not repeat the technical details but focus on necessary modifications for this setup.

For a proper metric measure space $(X,d,\m)$ that is essentially nonbranching there are several equivalent ways to define the $\MCP(K,N)$. The following one can be found in \cite[Section~9]{cavmil}.
\begin{Definition}[measure contraction property]
Let $(X,d,\m)$ be proper and essentially nonbranching. The measure contraction property $\MCP(K,N)$, $K\in \R$ and $N\in (1,\infty)$ holds if for every pair $\mu_0,\mu_1\in \mathcal P^2(X)$ such that $\mu_0$ is $\m$-absolutely continuous there exists a dynamical optimal plan $\Pi$ such that $(e_t)_{\#}\Pi=\mu_t\in \mathcal P^2(\m)$ and{\samepage
\begin{gather*}
\rho_t(\gamma_t)^{-\frac{1}{N}}\geq \tau_{K,N}^{(1-t)}({d(\gamma_0,\gamma_1)}) \rho_0(\gamma_{0})^{-\frac{1}{N}} \qquad \mbox{for }\Pi \mbox{-a.e.\ geodesic $\gamma$},
\end{gather*}
where $\mu_t=\rho_t\m$.}

For $\Omega\subset X$ with $\m(\Omega)>0$ the {\it restricted measure contraction property $\MCP_r$} is defined similarly as the condition $\CDD_r$ (compare with Definition \ref{def:cd}).
\end{Definition}
All the technical results in Section~\ref{subsec:1D} still hold when we replace the condition $\CDD(K,N)$ with $\MCP(K,N)$ (cf.~\cite{cav-mon-lapl-18}). Only in Theorem~\ref{th:1dlocalisation} the density $h_\alpha$ of the conditional measure $\m_\alpha$ need not satisfy~\eqref{kuconcave} and therefore need not be semiconcave, although it does remain locally Lipschitz on $(a(X_\alpha),b(X_\alpha))$ and extends continuously to the endpoints.
Instead one has only
\begin{align}\label{ineq:mcpmcp}
h_\alpha(\gamma_t)^{\frac{1}{N-1}}\geq \sigma_{K,N}^{(1-t)}({|\gamma_0-\gamma_1|}) h_{\alpha}(\gamma_0)^{\frac{1}{N-1}}
\end{align}
for every affine function $\gamma\colon [0,1]\rightarrow [a(X_\alpha),b(X_\alpha)]$ in general
where we consider $h_\alpha$ as a continuous function on $[a(X_\alpha),b(X_\alpha)]$.

Considering $\Omega\subset X$ with $\partial \Omega =S$ and $\m(S)=0$ then Definition~\ref{def:surfacemeasure} for $\m_S$ continues to make sense. We also can define the notion of finite inner curvature of~$\Omega$. However, since~$h_{\alpha}$ is not semiconcave in general, the right and the left derivative might not exist for every $t\in [a(X_\alpha), b(X_\alpha)]$.
Therefore, for a continuous function $f\colon [a,b]\rightarrow\mathbb R$ we set
\begin{gather*}
\frac{{\rm d}^-}{{\rm d}t}f(t)=\limsup_{h\uparrow 0} \frac{1}{h} \left[f(t+h)-f(t)\right] \qquad \mbox{for }t\in (a,b].
\end{gather*}
We can set up a definition of mean curvature for subsets in $\MCP$ spaces in the following way.
\begin{Definition}[inner mean curvature revisited in the $\MCP$ setting]\label{def:meanmcp}
Set $S=\partial \Omega$ and let $\{X_{\alpha}\}_{\alpha\in Q}$ be the disintegration induced by $u:=d_S$.
Recalling~\eqref{Binout}, we say that $S$ has {\em finite inner} (respectively {\em outer}) {\em curvature} if $\m\big(B^\dagger_{\rm in}\big)=0$ (respectively $\m(B^\dagger_{\rm out})=0$). If~$S$ has finite inner curvature we define the {inner mean curvature} of $S$ $\m_S$-almost everywhere as
\begin{gather} \label{inner upper}
p\in S\mapsto
H_S^-(p):= \begin{cases}
\dfrac{{\rm d}^-}{{\rm d}r}|_{r=0}\log h_\alpha\circ \gamma_\alpha
& \mbox{if}
\ p=\gamma_\alpha(0)\in S\cap A^{\dagger},\\
\infty& \mbox{if} \ p\in B^\dagger_{{\rm out}, e}\cap S,
\end{cases}
\end{gather}
where we set
$\frac{{\rm d}^-}{{\rm d}r} \log h_{\alpha}(\gamma_\alpha(0))= -\infty$ if $h_\alpha(\gamma_\alpha(0))=0$.
\end{Definition}

\begin{Theorem}[inscribed radius bounds under $\MCP$]\label{T:MCP}
Let $(X,d,\m)$ be an essentially nonbranching $\MCP(K',N)$ space with $K'\in \R$, $N\in {(}1,\infty)$ and $\supp \m=X$. Consider $K, H\in \R$ such that $\big(\frac{K}{N-1}, \frac{H}{N-1}\big)$ satisfies the ball condition. Let $\Omega\subset X$ be closed with {$\Omega\neq X$}, $\m(\Omega)>0$ and $\m(\partial \Omega)=0$ such that $\Omega$ satisfies the {\it restricted curvature-dimension condition} $\MCP_{r}(K,N)$ for $K\in \R$ and $\partial \Omega=S$ has finite inner curvature. Assume the inner mean curvature $H_S^-$ satisfies $H^-_S\geq H$ $\m_S$-a.e., where $\m_S$ denotes the surface measure. Then
\begin{gather*}
\inrad \Omega\leq r_{K,H,N},
\end{gather*}
where $\inrad\Omega=\sup\limits_{x\in \Omega} d_{\Omega^c}(x)$ is the inscribed radius of~$\Omega$.
\end{Theorem}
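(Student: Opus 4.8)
The plan is to rerun the proof of Theorem~\ref{T:main} with a single modification: the comparison Lemma~\ref{lem:key}, whose proof used the two‑sided concavity \eqref{kuconcave}, must be replaced by a one‑sided statement tailored to the weaker inequality \eqref{ineq:mcpmcp}. Concretely, I would set $u=d_S$ and apply the $1D$-localization (which, as noted, is valid verbatim under $\MCP$) to obtain a disintegration $\m|_{\T_u}=\int_{Q^\dagger}h_\alpha\,\mathcal H^1|_{X_\alpha}\,{\rm d}\mathfrak q(\alpha)$ with $Q^\dagger$ of full $\mathfrak q$-measure and each $h_\alpha$ continuous on $[a(X_\alpha),0]$; arguing as for Remark~\ref{rem_mer}, the $\MCP_r(K,N)$ hypothesis makes $h_\alpha$ satisfy \eqref{ineq:mcpmcp} with constant $K$ for affine maps into $[a(X_\alpha),0]$. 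Exactly as in the proof of Theorem~\ref{T:main}, the bound $H^-_S\ge H$ $\m_S$-a.e.\ then forces $h_\alpha(0)>0$ and $\frac{{\rm d}^-}{{\rm d}r}\big|_{r=0}\log h_\alpha\ge H$ for $\mathfrak q$-a.e.\ $\alpha$.

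The heart of the matter is the following replacement for Lemma~\ref{lem:key}: if $h\colon[-L,0]\to[0,\infty)$ with $L>0$ is continuous, $h(0)>0$, satisfies \eqref{ineq:mcpmcp}, and $\frac{{\rm d}^-}{{\rm d}r}\big|_{r=0}\log h\ge H$, then $L\le r_{K,H,N}$. I would prove this in three steps. (i) Applying \eqref{ineq:mcpmcp} to the affine reparametrization of the whole needle and using $h(0)>0$ forces the distortion coefficient to be finite, hence $L<\pi_{K/(N-1)}$ and $\sin_{K/(N-1)}(\rho)>0$ for $\rho\in(0,L)$. (ii) For each such $\rho$, \eqref{ineq:mcpmcp} applied to the reparametrization of the subsegment from the boundary point to $\gamma_\alpha(-\rho)$ reads, in the arclength coordinate $r\le0$ along the needle (with $r=0$ on $S$),
\[
h(r)^{\frac{1}{N-1}}\ \ge\ \frac{\sin_{K/(N-1)}(\rho+r)}{\sin_{K/(N-1)}(\rho)}\,h(0)^{\frac{1}{N-1}},\qquad r\in[-\rho,0],
\]
an inequality which is an equality at $r=0$. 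Dividing the difference quotient by $r<0$ (reversing the inequality), taking $\limsup_{r\uparrow0}$, and converting the resulting one-sided derivative of $h^{1/(N-1)}$ into one of $\log h$ — legitimate since $h$ is locally Lipschitz with $h(0)>0$ — yields $\frac{H}{N-1}\le\frac{1}{N-1}\frac{{\rm d}^-}{{\rm d}r}\big|_{r=0}\log h\le\frac{\cos_{K/(N-1)}(\rho)}{\sin_{K/(N-1)}(\rho)}$, i.e.\ $s_{K/(N-1),H/(N-1)}(\rho)\ge0$. (iii) Since this holds for every $\rho\in(0,L)$ while, by the discussion following \eqref{equ:ks}, $s_{K/(N-1),H/(N-1)}$ has first positive zero $r_{K,H,N}$ and is strictly negative throughout $(r_{K,H,N},\pi_{K/(N-1)})$, one concludes $L\le r_{K,H,N}$.

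Applied to each $h_\alpha$ this gives $-a(X_\alpha)\le r_{K,H,N}$ for $\mathfrak q$-a.e.\ $\alpha\in Q^\dagger$. Since $S$ has finite inner curvature, $\m(B^\dagger_{\rm in})=0$, so $\m$-a.e.\ $x\in\Omega$ lies on a needle $\gamma_\alpha([a(X_\alpha),0])$ with $d_{\Omega^c}(x)\le-a(X_\alpha)\le r_{K,H,N}$; by continuity of $d_{\Omega^c}$ and $\supp\m=X$ this extends to all of $\Omega$, giving $\inrad\Omega\le r_{K,H,N}$. I expect step (ii) to be the real obstacle: without semiconcavity of $h_\alpha^{1/(N-1)}$ the ``below-tangent'' argument of Lemma~\ref{lem:key} is gone, so the sharp Riccati bound at $r=0$ must instead be squeezed out of \eqref{ineq:mcpmcp} by letting the $\MCP$ contraction target $\gamma_\alpha(-\rho)$ run to the far endpoint, and one must be careful that under $\MCP$ the one-sided derivative in Definition~\ref{def:meanmcp} is only a $\limsup$, so that the chain-rule identity $\frac{{\rm d}^-}{{\rm d}r}h^{1/(N-1)}=\frac{h(0)^{1/(N-1)}}{N-1}\frac{{\rm d}^-}{{\rm d}r}\log h$ at $r=0$ needs the local Lipschitz bound on $h_\alpha$. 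Everything else — the localization machinery, the reduction to a needle-length estimate, and the passage from $\m$-a.e.\ to everywhere — is identical to the $\CDD$ argument.
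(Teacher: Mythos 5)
Your proposal is correct and follows essentially the same route as the paper's proof: you apply the one-sided $\MCP$ inequality \eqref{ineq:mcpmcp} on subsegments anchored at the boundary point of each needle, take the left difference quotient at $r=0$ to obtain $\frac{H}{N-1}\le \cos_{K/(N-1)}(\rho)/\sin_{K/(N-1)}(\rho)$ for every $\rho$ up to the needle length, and conclude $-a(X_\alpha)\le r_{K,H,N}$ before finishing exactly as in the $\CDD$ case. The only cosmetic differences are that you deduce $\rho<\pi_{K/(N-1)}$ from finiteness of the distortion coefficient rather than citing the Bonnet--Myers bound, and you phrase the key derivative comparison via $h^{1/(N-1)}$ and $\log h$ rather than via $h$ and $\sigma^{N-1}$ directly.
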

\begin{proof}Let $\eta\colon [0,1]\rightarrow [a(X_\alpha),0]$ be an affine function with $a(X_\alpha)<\eta_1<\eta_0 :=0$. We note that $|\eta_0-\eta_1|= d(\gamma_\alpha(\eta_0), \gamma_\alpha(\eta_1))$.
From \eqref{ineq:mcpmcp} we have
\begin{align*}
h_{\alpha}(\eta(t))^{\frac{1}{N-1}}\geq \sigma_{K/N-1}^{(1-t)}(|\eta_0-\eta_1|) h_{\alpha}(\eta_0)^{\frac{1}{N-1}}
\end{align*}for any $ \alpha\in Q^{\dagger}$. It
follows
\begin{align*}
\frac{{\rm d}^-}{{\rm d}r}\Big|_{r=\eta_0}h_\alpha & = \limsup_{\eta_t\rightarrow \eta_0} \frac{1}{\eta_t-\eta_0}\left[h_\alpha(\eta_t)-h_\alpha(\eta_0)\right]\\
& \leq \frac{1}{\eta_1-\eta_0}\frac{{\rm d}}{{\rm d}t} \Big|_{t=0}\sigma_{K/N-1}^{(1-t)} (|\eta_0-\eta_1|)^{N-1}h_{\alpha}(\eta_0).
\end{align*}
If the inner mean curvature is bounded below by $H$, then
\begin{gather*}
H\leq \frac{1}{\eta_1-\eta_0}\frac{{\rm d}}{{\rm d}t}\Big|_{t=0} \sigma_{K/N-1}^{(1-t)}(|\eta_0-\eta_1|)^{{N-1}}
 = \frac{-|\eta_0-\eta_1|}{\eta_1-\eta_0} (N-1) \frac{\cos_{K/(N-1)}(|\eta_0-\eta_1|)}{\sin_{K/(N-1)}(|\eta_0-\eta_1|)}.
\end{gather*}
Hence $\frac{H}{N-1}\leq \frac{\cos_{K/(N-1)}(|\eta_0-\eta_1|)}{\sin_{K/(N-1)}(|\eta_0-\eta_1|)}$ for $\mathfrak q$-a.e.\ $\alpha\in Q^{\dagger}$.
The sharp Bonnet--Myers diameter bound \cite{lottvillani,stugeo2} for $\CDD(K,N)$ spaces yields $|\eta_0 -\eta_1| < \pi_{\frac K{N-1}}$
 from~\eqref{pik}. Thus the denominator above is non-negative and
\begin{align*}
0\leq \cos_{K/(N-1)}(|\eta_0-\eta_1|)-\frac{H}{N-1}\sin_{K/(N-1)}(|\eta_0-\eta_1|).
\end{align*}
Since this expression holds for all $\eta_1 \in (a(X_\alpha),0)$ we conclude $d(\gamma_\alpha(\eta_0),\gamma_\alpha(\eta_1))=|\eta_0-\eta_1|\leq r_{K,H,N}$. Otherwise $|\eta_0-\eta_1|>r_{K,H,N}$ implies that the right hand side in the last inequality is negative by the definition of $r_{K,H,N}$.

Noting $\eta_0=0$, taking $\eta_1\rightarrow a(X_\alpha)$, one can finish the argument exactly as for $\CDD(K,N)$ spaces.
\end{proof}

\section{A {different} form of mean curvature bound also suffices}\label{S:backwards}

Inspired by \cite{cm_new}, in this appendix we introduce another new notion of mean curvature bounded from below which yields Theorems~\ref{T:main} and~\ref{T:main3}
without requiring finite inner curvature of $\Omega$ but assuming that the measure $\mathfrak p_0$ in Remark~\ref{rem:surmea} is a Radon measure on $Q$.
 Let $L_{-{\rm loc}}^{1}(Q, {\rm d}\mathfrak p_0)$ denote the class of $\mathfrak p_0$-measurable functions $k\colon Q\longrightarrow [-\infty,\infty]$ whose negative part $\min\{0,k\}$
 belongs to $L^1_{\rm loc}(Q, {\rm d}\mathfrak p_0)$, i.e., is locally
$\mathfrak p_0$-summable.

\begin{Definition}[backward mean curvature bounded below] Let $(X,d,\m)$ be an essentially nonbranching metric measure space that satisfies $\MCP$ or $\CDD$. Recall the family of measures $\{\mathfrak p_t \}_{t \in (-\infty,0]}$ on $Q$ given by
${\rm d}\mathfrak p_t(\alpha)= h_\alpha\circ g(\alpha,t) {\rm d}\mathfrak q|_{\mathcal V_t}(\alpha)$
that we introduced in Remark~\ref{rem:surmea}, and its image $\m_{S_t}=g(\cdot,t)_\#\mathfrak p_t$ on~$X$.
Recall that $Q$ is constructed as a Borel subset of $X$ and $(\alpha,t)\mapsto g(\alpha,t)$ is the ray map
constructed in that remark.

Then $S=\partial \Omega$ has \emph{backward mean curvature bounded from below by $k \in L^{1}_{-{\rm loc}}(Q,{\rm d}\mathfrak p_0)$} if the measure $\mathfrak p_0$ is a Radon measure, $h_\alpha\circ g(\alpha,0)>0$ for $\mathfrak q$-a.e.\ $\alpha \in Q$, and
\begin{align}\label{ineq:bwmc}
\frac{{\rm d}^-}{{\rm d}t} \Big|_{t=0} \int_\Y {\rm d}\mathfrak p_t
:= \limsup_{t\uparrow 0} \frac{1}{t}\left( \int_\Y {\rm d}\mathfrak p_t- \int_\Y {\rm d}\mathfrak p_0\right)\geq \int_\Y k(q) \,{\rm d}\mathfrak p_0(q)
\end{align}
for any bounded measurable subset $Y\subset Q$.
Moreover, $S$ has \emph{backward-lower} mean curvature bounded from below by $k$ if the same inequality holds when $\limsup$ is replaced by $\liminf$.
Heuristically, these limits quantify the relative rate of change of surface area of the level sets of $d_S$, as when $Y=Q \subset X$ is bounded.
We may denote the greatest lower bounds $k^\pm \in L^{1}_{-{\rm loc}}(Q,{\rm d}\mathfrak p_0)$ for the backwards and backwards-lower mean curvature for $k^+$ and $k^-$
respectively.
\end{Definition}

\begin{Remark}
Since it is not assumed that $\mathfrak p_t$ for $t<0$ is a Radon measure, $\int_\Y {\rm d}\mathfrak p_t$ can be infinite.
\end{Remark}

Similarly, when we want to distinguish between upper and lower limits, we refer to~\eqref{inner upper} as the {\em inner mean curvature},
and to the analogous quantity with $\liminf$ in place of $\limsup$, as the {\em inner-lower mean curvature.}
In a $\CDD(K,N)$ space, these two notions coincide.

\begin{Lemma}[backward versus inner mean curvature in $\MCP$]\label{lem:bmc} Let $(X,d,\m)$ be an essentially nonbranching $\MCP(K,N)$ space. Assume that $S=\partial \Omega$ for some Borel set $\Omega$ that has finite inner curvature in the sense of Definition~{\rm \ref{def:meanmcp}} and that $h_\alpha\circ g(\alpha,0) {\rm d}\mathfrak q(\alpha)= {\rm d}\mathfrak p_0(\alpha)$ is a Radon measure with $h_\alpha\circ g(\alpha,0)>0$ for $\mathfrak q$-a.e.\ $\alpha\in Q$.
Then $\m_{S}$-almost everywhere,
the inner mean curvature $H^-_S(x)$ of $S$ is bounded from below by $k(x)$ if $S$ has backward mean curvature bounded from below by $k \circ g(\cdot,0) \in L^{1}_{-{\rm loc}}(Q,{\rm d} \mathfrak p_0)$.

Conversely, the backward-lower mean curvature is bounded from below by $k \circ g(\cdot,0) $ if
$k\circ g(\cdot,0) \in L^{1}_{-{\rm loc}}(Q,{\rm d} \mathfrak p_0)$ and there exists $\delta>0$ such that
$(i)$~the inner-lower mean curvature of~$S$ is bounded from below by $k(x)$ for $\m_S$-almost every $x\in S$, and
$(ii)$~$b(X_\alpha)>\delta$ holds for $q$-$a.e.\ \alpha \in Q$. Note $(ii)$ is satisfied if $\Omega$ satisfies a uniform exterior ball condition.
\end{Lemma}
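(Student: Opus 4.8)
The plan is to reduce both implications to a Fatou-type interchange of a limit with an integral over $Q$, after rewriting the backward and the inner mean curvature conditions as statements about the one-sided difference quotients of $t\mapsto h_\alpha\circ g(\alpha,t)$ at $t=0$.

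First I would dispose of the bookkeeping. Since $\Omega$ has finite inner curvature, $\mathfrak q(\mathfrak Q(B^\dagger_{\rm in}))=\m(B^\dagger_{\rm in})=0$, so the (assumed Radon) measure $\mathfrak p_0$ is carried by $\mathfrak Q(A^\dagger)$, the section $\hat s=g(\cdot,0)$ is injective there, and $\m_S=\hat s_\#\mathfrak p_0$ as in Definition~\ref{def:surfacemeasure}. For $\alpha\in\mathfrak Q(A^\dagger)$ and $t\le 0$ one has $g(\alpha,t)=\gamma_\alpha(t)$, and $(\alpha,t)\in\mathcal V$ precisely when $a(X_\alpha)\le t$; extending $h_\alpha$ by zero past its endpoints as in Remark~\ref{rem:extden}, this gives for every bounded measurable $Y\subset Q$ and every $t<0$
\[
\frac1t\Big(\int_Y{\rm d}\mathfrak p_t-\int_Y{\rm d}\mathfrak p_0\Big)=\int_Y\frac{h_\alpha(t)-h_\alpha(0)}{t}\,{\rm d}\mathfrak q(\alpha).
\]
On the set $\{a(X_\alpha)=0\}$ the needle never enters $\Omega^\circ$, so $H^-_S\equiv+\infty$ there by the endpoint convention and both implications are vacuous; it therefore suffices to work on $\{a(X_\alpha)<0\}$, exhausted by the sets $\{a(X_\alpha)\le t_0\}$ as $t_0\uparrow 0$. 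Finally, since $h_\alpha(0)>0$ $\mathfrak q$-a.e.\ and $h_\alpha$ is continuous, the inner (resp.\ inner-lower) mean curvature at $\gamma_\alpha(0)$ equals $h_\alpha(0)^{-1}\limsup_{t\uparrow 0}\tfrac{h_\alpha(t)-h_\alpha(0)}{t}$ (resp.\ with $\liminf$), so everything reduces to comparing the $\limsup$/$\liminf$ of the integral above with $\int_Y h_\alpha(0)\cdot(\text{inner, resp.\ inner-lower, mean curvature})\,{\rm d}\mathfrak q$.

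For the first implication I would apply the reverse Fatou lemma to $\tfrac{h_\alpha(t)-h_\alpha(0)}{t}$ on a set $Y\subset\{a(X_\alpha)\le t_0\}$. Writing $t=st_0$ with $s\in(0,1)$, the $\MCP$ density inequality \eqref{ineq:mcpmcp} on the affine segment from $0$ to $t_0$ gives $h_\alpha(t)\ge\sigma^{(1-s)}_{K,N-1}(|t_0|)^{N-1}h_\alpha(0)$, hence $\tfrac{h_\alpha(t)-h_\alpha(0)}{t}\le C(t_0)\,h_\alpha(0)$ uniformly in $s$, and $C(t_0)\,h_\alpha(0)\in L^1(\mathfrak q|_Y)$ because $\mathfrak p_0$ is Radon and $Y$ bounded. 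Reverse Fatou followed by the backward mean curvature hypothesis (applied to $Y$) gives $\int_Y h_\alpha(0)\,H^-_S(\gamma_\alpha(0))\,{\rm d}\mathfrak q\ge\int_Y k(g(\alpha,0))\,h_\alpha(0)\,{\rm d}\mathfrak q$; letting $Y$ range over all bounded measurable subsets of $\{a(X_\alpha)\le t_0\}$ and then $t_0\uparrow 0$, and using $\sigma$-finiteness of $(Q,\mathfrak q)$, we conclude $H^-_S\ge k$ $\m_S$-a.e.

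The converse is the analogous interchange with $\liminf$ and ordinary Fatou, and this is exactly where hypothesis (ii) is used: it is the only control on the needle on the $\Omega^c$ side, which is what pins $h_\alpha$ from above near $r=0$. Assuming $b(X_\alpha)>\delta$ for $\mathfrak q$-a.e.\ $\alpha$ and shrinking $\delta$ below $\pi_{K/(N-1)}$, I would apply \eqref{ineq:mcpmcp} to the affine segment from a slightly negative $t$ to $\delta$, at the parameter $s_0=|t|/(\delta+|t|)$ reaching $r=0$ (for which $(1-s_0)(\delta-t)=\delta$ exactly), obtaining $h_\alpha(t)\le h_\alpha(0)\big(\sin_{K/(N-1)}(\delta-t)/\sin_{K/(N-1)}(\delta)\big)^{N-1}\le(1+C|t|)h_\alpha(0)$ for $|t|$ small, hence $\tfrac{h_\alpha(t)-h_\alpha(0)}{t}\ge -C\,h_\alpha(0)$, again dominated in $L^1(\mathfrak q|_Y)$. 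Fatou together with hypothesis (i) then delivers the backward-lower bound for each $Y$. The last parenthetical claim follows as in Lemma~\ref{lem:ballcondition}: a uniform exterior ball of radius at least $2\delta$ at $\gamma_\alpha(0)$ forces the needle to extend past $r=\delta$. The one genuinely substantive point is extracting these two uniform $L^1$-domination bounds from \eqref{ineq:mcpmcp}; the remainder is routine measure theory and the dictionary of Section~\ref{subsec:meancurvature}.
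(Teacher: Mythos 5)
Your proposal is correct and follows essentially the same route as the paper: rewrite $\tfrac1t\big(\int_Y{\rm d}\mathfrak p_t-\int_Y{\rm d}\mathfrak p_0\big)$ as an integral of one-sided difference quotients of $h_\alpha$, extract uniform one-sided $L^1(\mathfrak q|_Y)$ domination from the $\MCP$ density inequality (your bound on $\{a(X_\alpha)\le t_0\}$ plays the role of the paper's exhaustion by $Q_m=\{1/m\le -a(X_\alpha)\le m\}$, and your $\sin_{K/(N-1)}(\delta-t)/\sin_{K/(N-1)}(\delta)$ estimate is the paper's $-C(K,N,\delta)$ bound), then apply reverse Fatou, respectively Fatou. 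The only presentational difference is that the paper first truncates $k$ to $k^M=\min\{k,M\}$ to keep the localization step $\int_Y f\,{\rm d}\mathfrak q\le\int_Y g\,{\rm d}\mathfrak q\Rightarrow f\le g$ clean, a precaution your final "let $Y$ range over all bounded measurable subsets" step would also want.
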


\begin{proof} {\bf 1.}
Assume backward mean curvature bounded below by $k\circ g(\cdot,0)\in L^{1}_{-{\rm loc}}(Q,{\rm d}\mathfrak p_0)$. Then, by monotonicity of the right hand side in~\eqref{ineq:bwmc} the backward mean curvature is also bounded below by $k^M:=\min\{k, M\} \in L^1_{\rm loc}(Q, {\rm d}\mathfrak p_0)$ for $M>0$ arbitrary.

We can compute for $t<0$ and a bounded measurable set $Y\subset Q$:
\begin{align*}
 \int_\Y {\rm d}\mathfrak p_t- \int_Y {\rm d}\mathfrak p_0
 = \int_\Y\left(1_{\mathcal V_t}(\alpha) h_\alpha\circ g(\alpha,t) - 1_{\mathcal V_0}(\alpha) h_\alpha\circ g(\alpha,0)\right) {\rm d}\mathfrak q(\alpha).
 \end{align*}
There exists a measurable subset $Q^*\subset Q^\dagger$ with $\mathfrak q[Q^\dagger\setminus Q^*]=0$ such that the map $\mathcal M\colon \alpha\in Q^* \mapsto -a(X_\alpha)\in [0,\infty)$ is measurable (for instance compare with step 1 in the proof of Theorem~7.10 in~\cite{cavmil} or Remark~3.4 in~\cite{kks}). Then, we consider the family of measurable sets $Q_{m}=\mathcal M^{-1}\big(\big[\frac{1}{m},m\big]\big)$ for $m\in \mathbb N$ that satisfy $\bigcup_{m\in \mathbb N}Q_{m}=Q^*$. As in~\cite{cavmil}
\begin{gather*}
\frac{1}{h_\alpha\circ g(\alpha,0)}\frac{1}{r}(h_\alpha\circ g(\alpha,r) - h_\alpha\circ g(\alpha,0))
\leq (N-1)\frac{\cos_{-|K|/(N-1)}(-a(X_\alpha))}{\sin_{-|K|/(N-1)}(-a(X_\alpha))}\leq C(K,N,m)\!
\end{gather*} $\forall\, r\in (a(X_\alpha),0), \ \forall\, \alpha\in Q_{m}$.
Thus Fatou's lemma yields
\begin{gather*}
\int_{\Y\cap Q_{m} \cap \mathcal V_0} \big(k^Mh_\alpha\big)\circ g(\alpha,0) \,{\rm d}\mathfrak q(\alpha)=\int_{\Y\cap Q_{m}} k^M \circ g(\alpha ,0) \,{\rm d}\mathfrak p_0(\alpha)\\
\qquad{}\leq \int_{\Y\cap Q_{m}} \limsup_{t\uparrow 0}\frac{1}{t} \left(1_{\mathcal V_t}(\alpha) h_\alpha\circ g(\alpha,t) - 1_{\mathcal V_0}(\alpha) h_\alpha\circ g(\alpha,0)\right) {\rm d}\mathfrak q(\alpha)
 \\
\qquad{}\leq \int_{\Y\cap Q_{m}} \limsup_{t\uparrow 0}\frac{1}{t} \left(1_{\mathcal V_t\cap \mathcal V_0}(\alpha) h_\alpha \circ g(\alpha,t) - 1_{\mathcal V_0}(\alpha) h_\alpha\circ g(\alpha,0)\right) {\rm d}\mathfrak q(\alpha)
 \\
\qquad{} =\int_{\Y\cap Q_{m}\cap \mathcal V_0} \frac{{\rm d}^-}{{\rm d}t}\Big|_{t=0} h_\alpha \circ g(\alpha,t) \,{\rm d}\mathfrak q(\alpha)
 \end{gather*}
 for any bounded measurable $Y \subset Q$.
 It follows that
 \begin{gather}\label{inequ:nini}
 \big(k^M h_\alpha\big)\circ g(\alpha,0)\leq \frac{{\rm d}^-}{{\rm d}t}\Big|_{t=0}h_\alpha\circ g(\alpha,t) \qquad \mbox{for }\mathfrak q\mbox{-a.e. }\alpha\in \mathcal V_0.
 \end{gather}
We assumed $h_\alpha\circ g(\alpha,0)>0$ for $\mathfrak q$-almost every $\alpha\in \mathcal V_0$.
Therefore, it follows that
 \begin{align*}
\int_{\Y \cap \mathcal V_0} \big(k^{M}h_\alpha\big)\circ g(\alpha,0)\,{\rm d}\mathfrak q(\alpha)
&\leq \int_{\Y \cap \mathcal V_0} \frac{{\rm d}^-}{{\rm d}t}\Big|_{t=0} h_\alpha\circ g(\alpha,t) \,{\rm d}\mathfrak q(\alpha)\\
&=\int_{\Y \cap \mathcal V_0} \frac{{\rm d}^-}{{\rm d}t}\Big|_{t=0} \log (h_\alpha\circ g(\alpha,t)) h_\alpha\circ g(\alpha,0) \,{\rm d}\mathfrak q(\alpha).
 \end{align*}
Now, we recall that $\mathcal V_0\subset \mathfrak Q\big(A^\dagger\big)\cup B_{\rm in}^{\dagger}$ with $\mathfrak q\big( \mathfrak Q\big(A^{\dagger}\cup B_{\rm in}^{\dagger}\big)\backslash \mathcal V_0\big)=0$ and $\m\big(B^{\dagger}_{\rm in}\big)=\mathfrak q\big(\mathfrak Q\big(B^{\dagger}_{\rm in}\big)\big)=0$ (because we assume finite inner curvature). Moreover $g(\alpha,t)=\gamma_\alpha(t)$, $h_\alpha\circ g(\alpha,t)=h_\alpha\circ \gamma_\alpha(t)=h_\alpha(t)$ and $\alpha=\gamma_\alpha(0)$ if $\alpha\in \mathfrak Q(A)$. Hence $\m_S=\m_{S_0}= g(\cdot,0)_{\#} \mathfrak p_0$ and
\begin{align*}
\int_\Y k^{M}\dm_S &
=\int_{\Y \cap \mathfrak Q(A^\dagger)}
k^{M}\circ \gamma_\alpha(0) h_\alpha(0)\,{\rm d}\mathfrak q(\alpha) \\
&\leq \int_{\Y \cap \mathfrak Q(A^\dagger)}
H^-_S\circ \gamma_\alpha(0) h_\alpha(0) \,{\rm d}\mathfrak q(\alpha)
=\int_\Y H^-_S \dm_S,
 \end{align*}
and consequently $k^{M}\leq H_S^-$ $\m_S$-almost everywhere. Letting $M\rightarrow \infty$ yields $k\leq H_S^-$ $\m_S$-almost everywhere.

{\bf 2.} Now assume for some $k\circ g(\cdot,0)\in L^{1}_{-{\rm loc}}(Q,{\rm d} \mathfrak p_0)$
that $\m_S$-a.e.\ the inner-{\em lower} mean curvature is bounded from below by $k$. We also assume that $b(X_\alpha)>\delta>0$ for $\mathfrak q$-almost every $\alpha\in Q$ and some $\delta>0$. This implies $B_{\rm in} =\varnothing$ and hence $\m_{S_0}=\m_S$.
Recall from the proof of Lemma~\ref{lem:ballcondition} that this is true if $\Omega$ satisfies a uniform exterior ball condition.

Again as in \cite{cavmil}
\begin{align*}
-C(K,N,\delta) \leq-(N-1)\frac{\cos_{-|K|/(N-1)}(b(X_\alpha)-a(X_\alpha))}{\sin_{-|K|/(N-1)}(b(X_\alpha)-a(X_\alpha))}\leq
\frac{(h_\alpha\circ g(\alpha,r) - h_\alpha\circ g(\alpha,0))}{r \ \cdot \ h_\alpha\circ g(\alpha, 0)}
\end{align*}$\forall\, r\in (a(X_\alpha),b(X_\alpha))$.

Using Fatou's lemma again, it follows that
\begin{align*}
\int_{Y} k \circ \gamma_\alpha(0)\, {\rm d}\mathfrak p_0(\alpha)
&=\int_{Y \cap \mathcal V_0} (kh_\alpha)\circ g(\alpha,0) \,{\rm d}\mathfrak q(\alpha)
 =\int_{Y \cap \mathfrak Q(A^\dagger)} k(\gamma_\alpha(0)) h_\alpha(0) \,{\rm d}\mathfrak q(\alpha)\\
&\leq \int_{Y \cap \mathcal V_0} \liminf_{t \uparrow 0} \frac{\log h_\alpha(t) - \log h_\alpha(0)}{t} h_\alpha(0) \,{\rm d}\mathfrak q(\alpha)\\
&\leq \liminf_{t\uparrow 0} \int_{Y} \frac{1}{t} \left(1_{\mathcal V_t}(\alpha) h_\alpha(t) - 1_{\mathcal V_0}(\alpha) h_\alpha(0)\right) {\rm d}\mathfrak q(\alpha)\\
&= \liminf_{t\uparrow 0} \frac{1}{t}\left(
 \int_{Y} {\rm d}\mathfrak p_t- \int_{Y} {\rm d}\mathfrak p_0\right),
 \end{align*}
 for any bounded measurable set $Y\subset Q$. Thus the backward-lower mean curvature is bounded below by $k\circ g(\cdot,0)$ as desired.
\end{proof}

\begin{Corollary}[backward versus inner mean curvature in $\CDD$]\label{cor:bmc} Let $(X,d,\m)$ be an essentially nonbranching $\CDD(K,N)$ space. Assume that $S=\partial \Omega$ for some Borel set $\Omega$ satisfies a uniform external ball condition and that $h_\alpha\circ g(\alpha,0) {\rm d}\mathfrak q(\alpha)= {\rm d}\mathfrak p_0(\alpha)$ is a Radon measure with \mbox{$h_\alpha\circ g(\alpha,0)>0$} for $\mathfrak q$-a.e.\ $\alpha\in Q$.
Then $S$ has a backwards mean curvature bound $k \in L^{1}_{-{\rm loc}}(Q,{\rm d}\mathfrak p_0)$ from below if and only if $S$ has inner mean curvature $H^-_S \in L^{1}_{-{\rm loc}}(S,\dm_S)$.
When either holds, then $\mathfrak p_0$-almost everywhere on $Q$,
the backward and backward-lower mean curvatures $k^\pm$ of $S$ both coincide with $H^-_S \circ g(\cdot,0)$.
\end{Corollary}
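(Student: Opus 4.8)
The plan is to deduce the corollary from Lemma~\ref{lem:bmc} together with the fact that in a $\CDD(K,N)$ space the conditional densities $h_\alpha$ are semiconcave, which collapses the gap between the $\limsup$ and $\liminf$ versions of every mean-curvature quantity in play.

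First I would unpack the standing hypotheses. An essentially nonbranching $\CDD(K,N)$ space is $\MCP(K,N)$, so Lemma~\ref{lem:bmc} applies. By the argument in the proof of Lemma~\ref{lem:ballcondition}, the uniform exterior ball condition forces $b(X_\alpha)>\delta$ for $\mathfrak q$-a.e.\ $\alpha$ and some $\delta>0$; hence $S$ has finite inner curvature, $B_{\rm in}=\varnothing$, and $\m_S=\m_{S_0}=g(\cdot,0)_{\#}\mathfrak p_0$. Since $B_{\rm in}=\varnothing$, the set $\mathcal V_0$ coincides with $\mathfrak Q(A^\dagger)$ up to a $\mathfrak q$-null set, on which $g(\cdot,0)$ is simply $\alpha\mapsto\gamma_\alpha(0)$ and hence injective; consequently push-forward along $g(\cdot,0)$ identifies $L^1_{-{\rm loc}}(Q,{\rm d}\mathfrak p_0)$ with $L^1_{-{\rm loc}}(S,\dm_S)$, and $H_S^-\in L^1_{-{\rm loc}}(S,\dm_S)$ iff $H_S^-\circ g(\cdot,0)\in L^1_{-{\rm loc}}(Q,{\rm d}\mathfrak p_0)$. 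Finally, by Remark~\ref{rem:kuconcave} the function $h_\alpha^{1/(N-1)}$ is semiconcave on $(a(X_\alpha),b(X_\alpha))$; since $h_\alpha(0)>0$ and $h_\alpha$ is continuous for $\mathfrak q$-a.e.\ $\alpha$, the function $\log h_\alpha\circ\gamma_\alpha$ is semiconcave on a left neighbourhood of $0$, so its left derivative at $0$ exists as a genuine limit. Therefore the inner and inner-lower mean curvatures of $S$ coincide $\m_S$-a.e., both equal to $H_S^-$.

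The equivalence then follows from the two halves of Lemma~\ref{lem:bmc}. If $S$ has backward mean curvature bounded below by some $k\in L^1_{-{\rm loc}}(Q,{\rm d}\mathfrak p_0)$, the first half gives $H_S^-\circ g(\cdot,0)\ge k$ $\mathfrak p_0$-a.e.; since $\min\{0,k\}$ is locally $\mathfrak p_0$-summable and $\min\{0,H_S^-\circ g(\cdot,0)\}\ge\min\{0,k\}$, the negative part of $H_S^-\circ g(\cdot,0)$ is locally summable, i.e.\ $H_S^-\in L^1_{-{\rm loc}}(S,\dm_S)$. Conversely, if $H_S^-\in L^1_{-{\rm loc}}(S,\dm_S)$, take $k:=H_S^-$: by the previous paragraph the inner-lower mean curvature of $S$ is bounded below by $k$, hypotheses (i) and (ii) of the second half of Lemma~\ref{lem:bmc} hold (the latter via the uniform exterior ball condition, which also supplies the required $b(X_\alpha)>\delta$), and so the backward-lower mean curvature, hence also the backward mean curvature, of $S$ is bounded below by $k\circ g(\cdot,0)=H_S^-\circ g(\cdot,0)\in L^1_{-{\rm loc}}(Q,{\rm d}\mathfrak p_0)$.

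Finally I would identify $k^{\pm}$. The converse step just performed exhibits $H_S^-\circ g(\cdot,0)$ as a valid lower bound for both the backward and the backward-lower mean curvature of $S$. In the other direction, any $k\in L^1_{-{\rm loc}}(Q,{\rm d}\mathfrak p_0)$ bounding the backward mean curvature below --- and a fortiori any bounding the backward-lower one below, since $\liminf\le\limsup$ --- satisfies $k\le H_S^-\circ g(\cdot,0)$ $\mathfrak p_0$-a.e.\ by the first half of Lemma~\ref{lem:bmc}. Hence $H_S^-\circ g(\cdot,0)$ is the greatest such lower bound in each case, i.e.\ $k^{+}=k^{-}=H_S^-\circ g(\cdot,0)$ $\mathfrak p_0$-a.e. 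I expect the only real subtlety here to be the measure-theoretic bookkeeping --- transferring $L^1_{-{\rm loc}}$ classes between $Q$ and $S$ through the ray map at $t=0$ and keeping the two applications of Lemma~\ref{lem:bmc} pointed in mutually compatible directions --- rather than any new analytic input beyond the semiconcavity recalled above.
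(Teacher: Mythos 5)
Your proposal is correct and follows essentially the same route as the paper: both directions are obtained from the two halves of Lemma~\ref{lem:bmc}, using the uniform exterior ball condition to secure finite inner curvature and hypothesis (ii), the semiconcavity of $h_\alpha^{1/(N-1)}$ in the $\CDD$ setting to identify the inner and inner-lower mean curvatures, and the chain $H^-_S\circ g(\cdot,0)\ge k^+\ge k^-\ge H^-_S\circ g(\cdot,0)$ to force all three equalities. The extra bookkeeping you supply on transferring $L^1_{-{\rm loc}}$ classes between $Q$ and $S$ via $g(\cdot,0)$ is a harmless elaboration of what the paper leaves implicit.
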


\begin{proof}Recall that the uniform external ball condition assumed implies $S$ has finite inner curvature by Lemma~\ref{lem:ballcondition}, and that $\m_S=\m_{S_0}$.
Let $H^-_S$ denote the inner mean curvature of $S$, which agrees with its inner-lower mean curvature $\m_S$-a.e.\ due to the semiconcavity of $h_\alpha^{1/{(N-1)}}$ in
$\CDD(K,N)$ spaces.

If $S$ has a backwards mean curvature bound $k \circ g(\cdot,0) \in L^{1}_{-{\rm loc}}(Q,{\rm d}\mathfrak p_0)$ from below,
it admits a greatest such bound $k^+ \circ g(\cdot,0)$.
Lemma~\ref{lem:bmc} asserts $H^-_S\ \circ g(\cdot,0) \ge k^+\circ g(\cdot,0)$ holds $\mathfrak p_0$-a.e., which implies $H^-_S \in L^1_{-{\rm loc}}(S,\dm_S)$.

Conversely, if $S$ has inner mean curvature $H^-_S \in L^1_{-{\rm loc}}(S,\dm_S)$, then Lemma~\ref{lem:bmc} asserts $S$ has backwards-lower mean curvature
$k^-\circ g(\cdot,0) \ge H^-_S \circ g(\cdot,0) \in L^1_{-{\rm loc}}(Q,{\rm d}\mathfrak p_0)$. We conclude $k^-\circ g(\cdot,0)$ is also a backwards (i.e., backwards-upper)
mean curvature lower bound for $S$.

Since $k^+ \circ g(\cdot,0) \ge k^-\circ g(\cdot,0)$ by definition, in either (and hence both) cases above we conclude equalities hold $\mathfrak p_0$-a.e.\ in all three of the inequalities
preceding, to conclude the proof.
\end{proof}

We {state a theorem under $\MCP$. The corresponding statement for $\CDD$ then follows} since $\CDD$ implies $\MCP$ for essentially nonbranching proper metric measure spaces.

\begin{Theorem}[inradius bounds under backward mean curvature bounded below]\label{T:main4}
Let $(X,d,\m)$ be an essentially nonbranching $\MCP(K,N)$ space with $K\in \R$, $N\in {(}1,\infty)$ and $\supp \m=X$. Let $\Omega\subset X$ be closed with {$\Omega\neq X$}, $\m(\Omega)>0$ and $\m(\partial \Omega)=0$. Assume $\partial \Omega=S$ has backward mean curvature bounded from below by $H\in \mathbb R$.
Then
\begin{align*}
\inrad \Omega\leq r_{K,H,N}.
\end{align*}
\end{Theorem}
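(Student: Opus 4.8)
The plan is to run the single-needle estimate from the proof of Theorem~\ref{T:MCP}, after first converting the (integrated) backward mean curvature hypothesis into a pointwise lower bound on the log-derivative of the needle densities by a Fatou argument along the lines of the proof of Lemma~\ref{lem:bmc}, but now without assuming finite inner curvature. First I would put $u=d_S$ and apply the $\MCP$-version of the $1D$-localization (Theorems~\ref{T:CM disintegration} and~\ref{th:1dlocalisation}), producing the partition $\{X_\alpha\}_{\alpha\in Q}$, the conditional densities $h_\alpha$ obeying~\eqref{ineq:mcpmcp}, the ray map $g\colon\mathcal V\to\Omega$ and the family $\{\mathfrak p_t\}_{t\le 0}$ of Remark~\ref{rem:surmea}; abbreviate $\tilde h_\alpha(t):=h_\alpha\circ g(\alpha,t)$, defined on $\mathcal V_\alpha=[-\ell_\alpha,0]$. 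A short preliminary step is to note that for $\mathfrak q$-a.e.\ $\alpha$ the point $g(\alpha,0)$ lies on $S$: for needles that meet $S$ this is how $\hat s$ is defined, while for inner needles ($\alpha\in\mathfrak Q(B_{\rm in})$, where $g(\alpha,0)=\gamma_\alpha(b(X_\alpha))$) it follows from $d_S(\gamma_\alpha(b(X_\alpha)))\ge 0$ combined with $d_S\le 0$ on $\overline{\Omega^\circ}$. Hence $d_S$ is affine of unit slope along $g(\alpha,\cdot)$ and vanishes at $t=0$, so $d_{\Omega^c}(g(\alpha,t))=-t$ on $\mathcal V_\alpha$. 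Since $\m|_\Omega=\int\big(g(\cdot,t)_{\#}\mathfrak p_t\big)\,{\rm d}t$ is concentrated on $\bigcup_\alpha g(\alpha,[-\ell_\alpha,0])$, and $d_{\Omega^c}$ is continuous with $\supp\m=X$, it then suffices to show $\ell_\alpha\le r_{K,H,N}$ for $\mathfrak q$-a.e.\ $\alpha$; and one may assume $\big(\frac{K}{N-1},\frac{H}{N-1}\big)$ satisfies the ball condition, as $r_{K,H,N}=\infty$ otherwise.

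Second, I would reproduce exactly the needle computation of Theorem~\ref{T:MCP}: inserting~\eqref{ineq:mcpmcp} for $\tilde h_\alpha$ between parameters $0$ and $-\theta$ and differentiating at $t=0$ from the left gives, for $\mathfrak q$-a.e.\ $\alpha\in Q^\dagger$,
\[
\frac{{\rm d}^-}{{\rm d}t}\Big|_{t=0}\log\tilde h_\alpha(t)\ \le\ (N-1)\,\frac{\cos_{K/(N-1)}(\theta)}{\sin_{K/(N-1)}(\theta)}\qquad\text{for all }\theta\in(0,\ell_\alpha),
\]
the same inequality (since $\tilde h_\alpha(0)>0$) also forcing $\ell_\alpha\le\pi_{K/(N-1)}$, so that the denominator is positive. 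The remaining task is to bound the left side below by $H$. Set $Q_m:=\{\alpha\in Q^\dagger:\ell_\alpha>1/m\}$, so that $\bigcup_m Q_m$ has full $\mathfrak q$-measure (if $\ell_\alpha=0$ then $\tilde h_\alpha(0)=0$, which is ruled out). For bounded measurable $Y\subset Q_m$ and $|t|<1/m$ one has $Y\subset\mathcal V_t$, hence $\int_Y{\rm d}\mathfrak p_t-\int_Y{\rm d}\mathfrak p_0=\int_Y\big(\tilde h_\alpha(t)-\tilde h_\alpha(0)\big)\,{\rm d}\mathfrak q(\alpha)$; and since $\MCP(K,N)\Rightarrow\MCP(-|K|,N)$, the corresponding density inequality between parameters $0$ and $-1/m$ bounds $t^{-1}\big(\tilde h_\alpha(t)-\tilde h_\alpha(0)\big)$, for $|t|$ small, by a constant $C(K,N,m)$ times $\tilde h_\alpha(0)$, which is $\mathfrak q$-integrable over $Y$ because $\mathfrak p_0$ is Radon. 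Reverse Fatou combined with the backward mean curvature bound then yields
\[
H\,\mathfrak p_0(Y)\ \le\ \limsup_{t\uparrow 0}\frac1t\Big(\int_Y{\rm d}\mathfrak p_t-\int_Y{\rm d}\mathfrak p_0\Big)\ \le\ \int_Y\frac{{\rm d}^-}{{\rm d}t}\Big|_{t=0}\log\tilde h_\alpha(t)\,{\rm d}\mathfrak p_0(\alpha),
\]
and because $\mathfrak p_0$ is $\sigma$-finite and $Y\subset Q_m$ arbitrary, $H\le\frac{{\rm d}^-}{{\rm d}t}|_{t=0}\log\tilde h_\alpha(t)$ for $\mathfrak q$-a.e.\ $\alpha\in Q_m$, hence for $\mathfrak q$-a.e.\ $\alpha$.

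Third, combining the two displays, $H\le(N-1)\cos_{K/(N-1)}(\theta)/\sin_{K/(N-1)}(\theta)$ for every $\theta\in(0,\ell_\alpha)$, i.e.\ $s_{K/(N-1),\,H/(N-1)}(\theta)\ge 0$ on $(0,\ell_\alpha)$; as $s_{K/(N-1),\,H/(N-1)}$ is strictly negative on $(r_{K,H,N},\pi_{K/(N-1)})$ under the ball condition, while $\ell_\alpha\le\pi_{K/(N-1)}$, this forces $\ell_\alpha\le r_{K,H,N}$ for $\mathfrak q$-a.e.\ $\alpha$ (and in particular finitely), which by the reduction above gives $\inrad\Omega\le r_{K,H,N}$. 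I expect the delicate part to be the measure-theoretic step: producing a single $\mathfrak q$-integrable dominating function for the reverse Fatou inequality --- which is exactly what forces the localization onto the sets $Q_m$ of needles of length bounded away from $0$ --- while keeping track of how the domains $\mathcal V_t$ vary as $t\uparrow 0$ and of the inner needles, which are absent under finite inner curvature but cause no trouble once one knows $g(\alpha,0)\in S$.
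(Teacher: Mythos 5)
Your proposal is correct and follows essentially the same route as the paper: the MCP needle estimate of Theorem~\ref{T:MCP} combined with the Fatou/domination argument from step~1 of Lemma~\ref{lem:bmc} (localizing to needles of length bounded below so that $C(K,N,m)\,h_\alpha\circ g(\alpha,0)$ dominates the difference quotients) to upgrade the integrated backward bound to the pointwise inequality $H\le\frac{{\rm d}^-}{{\rm d}t}\big|_{t=0}\log h_\alpha\circ g(\alpha,t)$ for $\mathfrak q$-a.e.\ $\alpha$, and then the ODE comparison exactly as in Theorem~\ref{T:MCP}. (One cosmetic quibble: $\ell_\alpha=0$ does not force $\tilde h_\alpha(0)=0$, but such needles carry no mass in $\Omega$ and satisfy $\ell_\alpha\le r_{K,H,N}$ trivially, so nothing is lost.)
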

\begin{proof}
As in the {previous} appendix we have
\begin{align*}
h_{\alpha}(t)^{\frac{1}{N-1}}\geq \sigma_{K/N-1}^{(1-\frac{t}{a(X_\alpha)})}( -a(X_\alpha)) h_{\alpha}(0)^{\frac{1}{N-1}}
\end{align*}$\mbox{for any }t\in (a(X_\alpha), 0) \mbox{and any } \alpha\in Q^{\dagger}$.
Therefore, it follows that
\begin{align*}
\frac{{\rm d}^-}{{\rm d}t}\Big|_{t=0}h_{\alpha}\circ g(\alpha,t)
&=\limsup_{t\uparrow 0}\frac{1}{t} \left(h_\alpha(g(\alpha,t))- h_\alpha(g(\alpha,0))\right)
\\&\leq \frac{{\rm d}^-}{{\rm d}t} \Big|_{t=0}\sigma_{K/N-1}^{\left(\frac{a(X_\alpha)-t}{a(X_\alpha)}\right)} (-a(X_\alpha))^{N-1}h_{\alpha}(g(\alpha,0)).
\end{align*}
Since the backward mean curvature is bounded below by $H$, for $Y \subset Q$ bounded and measurable it follows that
\begin{gather*}
H\int_{\Y \cap \mathcal V_0} h_\alpha\circ g(\alpha,0) \,{\rm d}\mathfrak q(\alpha)
\leq \int_{\Y \cap \mathcal V_0} \frac{{\rm d}^-}{{\rm d}t} \Big|_{t=0} h_\alpha\circ g(\alpha,t) \,{\rm d}\mathfrak q(\alpha).
\end{gather*}
We obtain the inequality \eqref{inequ:nini} exactly as in the beginning of step 1 of the proof of Lemma~\ref{lem:bmc}. By the definition of backward-lower mean curvature bounds we have $h_\alpha(0)=h_\alpha\circ g(\alpha,0)>0$ for $\mathfrak q$-almost every $\alpha$.
Hence
\[ \frac{H}{N-1}\leq \frac{1}{N-1} \frac{{\rm d}^-}{{\rm d}t}\Big|_{t=0}\log h_\alpha\circ g(\alpha,t)\leq \frac{\cos_{K/(N-1)}(-a(X_\alpha))}{\sin_{K/(N-1)}({- a(X_{\alpha}) })}
\] for $\mathfrak q$-a.e.\ $\alpha\in \mathcal V_0=\mathfrak Q(A\cup B_{\rm in})$.

At this point it is clear that we can finish the proof as in Theorem \ref{T:MCP}.
\end{proof}

\begin{Theorem}[rigidity under backward mean curvature bounded from below]\label{T:main5}
Let $(X,d,\m)$ be $\RCRD(K,N)$ for $K\in \R$ and $N\in (1,\infty)$ and let $\Omega\subset X$ be compact with {$\Omega\neq X$}, $\m(\Omega)>0$, connected and non-empty interior $\Omega^{\circ}$ and $\m(\partial \Omega)=0$. We assume that $K\in \{N-1, 0, -(N-1)\}$, $\partial \Omega=S\neq \{pt\}$ and $S$ has backward mean curvature bounded below by $\kkapp(N-1)\in \mathbb R$.
 Then, there exists $x\in X$ such that
 \[ d_S(x)=\inrad \Omega =r_{K,\kkapp(N-1),N}\] if and only if $r_{K,\chi(N-1), N}<\infty$
and there exists an $\RCRD(N-2,N-1)$ space $Y$ such that $\big(\Omega^\circ, \tilde d_{\Omega^\circ}\big)$ is isometric to
 $\big(B_{r_{K,\kkapp(N-1),N}}(0), \tilde d\big)$ in $\tilde I_{\frac{K}{N-1}}\times_{\sin_{K/(N-1)}}^{N-1} Y$, where $\tilde d_{\Omega}$ and $\tilde d$ are the induced intrinsic distances of $\Omega^\circ$ and ${B}_{r_{K,\kkapp(N-1),N}}(0)$, respectively.
\end{Theorem}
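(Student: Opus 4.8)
The plan is to run the proof of Theorem~\ref{T:main3} essentially verbatim, with the backward mean curvature bound taking over the single role the inner mean curvature bound plays there, namely in the Laplace comparison for $d_{\Omega^c}$. Write $R:=r_{K,\kkapp(N-1),N}$. The ``if'' direction is checked by hand exactly as in the Remark following Theorem~\ref{T:main3}: for an $\RCRD(N-2,N-1)$ space $Y$ the truncated cone $B_R(0)$ in $\tilde I_{\frac{K}{N-1}}\times_{\sin_{K/(N-1)}}^{N-1}Y$ is $\RCRD(K,N)$, its needles for $d_{\Omega^c}$ are the radial geodesics, all of which reach the boundary sphere $S$, their densities $h_\alpha$ are proportional to $\sin_{K/(N-1)}^{N-1}(R-t)$, so that the backward mean curvature of $S$ equals $\kkapp(N-1)$ identically while the cone tip realizes $d_{\Omega^c}=R=\inrad\Omega$.

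For the ``only if'' direction, Theorem~\ref{T:main4} already gives $\inrad\Omega\le R$; assuming equality, fix $p\in\Omega^\circ$ with $d_{\Omega^c}(p)=R<\infty$ and a unit-speed minimizing geodesic $\gamma^*\colon[a,b]\to\Omega$ of length $R$ from $p$ to a point of $S$, so $B_R(p)\subset\Omega^\circ$. The one genuinely new step is the analog of Corollary~\ref{cor:laplace},
\[
({\bf \Delta} d_{\Omega^c})|_{\Omega^\circ}\le (N-1)\,\frac{s'_{\frac{K}{N-1},\kkapp}(d_{\Omega^c})}{s_{\frac{K}{N-1},\kkapp}(d_{\Omega^c})}\,\m|_{\Omega^\circ},
\]
now derived from the backward bound. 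By Theorem~\ref{thm:cm}, $({\bf \Delta}d_S)|_{X\setminus S}$ is $(\log h_\alpha)'\m$ plus endpoint terms; restricting to $\Omega^\circ$ and using $\{d_S>0\}\cap\Omega^\circ=\varnothing$, the $b(X_\alpha)$-terms drop out and the $a(X_\alpha)$-terms contribute a \emph{non-positive} measure to $({\bf \Delta}d_{\Omega^c})|_{\Omega^\circ}=-({\bf \Delta}d_S)|_{\Omega^\circ}$, so it suffices to bound the absolutely continuous part $-(\log h_\alpha)'$. Just as in the proof of Theorem~\ref{T:main4}, the backward bound together with the standing positivity $h_\alpha\circ g(\alpha,0)>0$ yields inequality~\eqref{inequ:nini} and hence $\frac{{\rm d}^-}{{\rm d}t}\big|_{t=0}\log(h_\alpha\circ g(\alpha,t))\ge\kkapp(N-1)$ for $\mathfrak q$-a.e.\ $\alpha\in\mathcal V_0$; for a needle $\alpha$ meeting $S$ this reads $H^-_S(\gamma_\alpha(0))\ge\kkapp(N-1)$, and feeding it into the reversed Riccati comparison of Remark~\ref{R:key} --- applying Lemma~\ref{lem:riccati} to $r\mapsto h_\alpha(-r)^{1/(N-1)}$, which is semiconcave by Remark~\ref{rem:kuconcave} --- produces the pointwise bound $-(\log h_\alpha)'\circ\gamma_\alpha(r)\le (N-1)\,s'_{\frac{K}{N-1},\kkapp}(-r)/s_{\frac{K}{N-1},\kkapp}(-r)$ on $(a(X_\alpha),0)$, as required.

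With this comparison in hand the remainder is word for word that of Theorem~\ref{T:main3}: combine it with the Laplace comparison~\eqref{xxxx} for $d_p$ on $\Omega^\circ\setminus\{p\}$ to see $d_p+d_{\Omega^c}$ is super-harmonic there; apply the strong maximum principle (Theorem~\ref{thm:mp}) on the connected component $\Omega^*$ of $\gamma^*$ in $\Omega^\circ\setminus\{p\}$ to conclude $d_p+d_{\Omega^c}\equiv R$ on $\Omega^*$; solve the resulting ODE to get $h_\alpha(r)=h_\alpha(0)J_{K,\kkapp(N-1),N}(-r)$ along the needles meeting $\Omega^*$ and recognize $\Omega^*$ as a volume cone in the sense of~\eqref{volumecone}; show $\Omega^\circ\setminus\{p\}=\Omega^*$ is connected by the essentially-nonbranching / Wasserstein-geodesic argument of step~2 there (streamlined by Deng's theorem, cf.\ Remark~\ref{remark_new}); invoke the De~Philippis--Gigli theorem (Theorem~\ref{thm:gp}), whose first two cases are ruled out since $S\neq\{pt\}$ and $N>1$, to obtain a measure-preserving local isometry of $\Omega^\circ$ onto $B_R(0)$ in $\tilde I_{\frac{K}{N-1}}\times_{\sin_{K/(N-1)}}^{N-1}Y$ for a suitable $\RCRD(N-2,N-1)$ space $Y$; and finally upgrade it to an isometry for the induced intrinsic distances $\tilde d_{\Omega^\circ}$, $\tilde d$ by the covering-and-limit argument of step~3 there.

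I expect the main difficulty to lie in the Laplace comparison step, and within it in accommodating a possibly non-empty inner reach $B_{\rm in}^\dagger$: finite inner curvature is not among the hypotheses, so needles lying entirely in $\Omega^\circ$ may occur, and along such a needle the backward bound pins down $h_\alpha$ only at the far endpoint $\gamma_\alpha(b(X_\alpha))$ rather than at a point of $S$, so the reversed Riccati comparison does not apply there. The natural remedy is to show the inner reach is $\m$-negligible once equality holds --- it manifestly is in the rigid ball-in-cone model, all of whose needles reach $S$ --- after which one is in the finite-inner-curvature regime and may instead quote Theorem~\ref{T:main3} via Lemma~\ref{lem:bmc}; establishing this negligibility directly (using the a priori bound of Theorem~\ref{T:main4}, the far-endpoint information available because $\mathcal V_0\supset\mathfrak Q(B_{\rm in}^\dagger)$, and semiconcavity of $h_\alpha^{1/(N-1)}$) is the delicate point. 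Under the additional assumption of a uniform exterior ball condition the subtlety disappears, since Corollary~\ref{cor:bmc} then reduces the statement to Theorem~\ref{T:main3} outright.
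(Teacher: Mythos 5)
Your overall architecture matches the paper's: the proof of Theorem~\ref{T:main5} is indeed ``rerun Theorem~\ref{T:main3}, with the backward bound feeding the Riccati comparison and hence the Laplace comparison for $d_{\Omega^c}$.'' But the one point you single out as genuinely new --- the needles in the inner reach $B_{\rm in}$ --- is exactly the point you do not resolve, and your proposed remedy is not the paper's. You suggest proving $\m\big(B^\dagger_{\rm in}\big)=0$ a posteriori and then falling back on Theorem~\ref{T:main3} via Lemma~\ref{lem:bmc}; you acknowledge this negligibility is ``the delicate point'' and offer no argument for it, and retreating to a uniform exterior ball hypothesis changes the statement. As written, the proposal therefore has a gap precisely where the theorem differs from Theorem~\ref{T:main3}.

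The gap closes without any negligibility claim, and your diagnosis of why the Riccati comparison ``does not apply'' on $B_{\rm in}$-needles is where you go astray. The paper's point (this is the content of the sentence ``\dots so in particular for $\alpha\in\mathfrak Q(B_{\rm in})$'' in its proof) is that the bound $\kkapp(N-1)\leq \frac{{\rm d}^-}{{\rm d}t}\big|_{t=0}\log h_\alpha\circ g(\alpha,t)$ extracted in the proof of Theorem~\ref{T:main4} holds for $\mathfrak q$-a.e.\ $\alpha\in\mathcal V_0=\mathfrak Q(A\cup B_{\rm in})$, with $h_\alpha\circ g(\alpha,0)>0$ there by the definition of the backward bound, so Lemma~\ref{lem:riccati} applies to $r\mapsto h_\alpha\circ g(\alpha,-r)^{1/(N-1)}$ on \emph{every} such needle, anchored at $g(\alpha,0)$. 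The missing observation is that this anchor point is not ``in the interior rather than on $S$'': for $\alpha\in Q^\dagger$ with $X_\alpha\subset\Omega^\circ$, any foot point $z\in\partial\Omega$ of a point $y\in X_\alpha$ satisfies $d_S(z)-d_S(y)=d(y,z)$, hence $z\in\Gamma_{d_S}(y)\subset R_{d_S}(y)=\overline{X_\alpha}$ by~\eqref{somehow}, and since $d_S$ increases along the ray, $z=\gamma_\alpha(b(X_\alpha))=g(\alpha,0)$. Thus $d_{\Omega^c}\circ g(\alpha,t)=-t$ for every needle meeting $\Omega^\circ$, there is no offset between the Riccati variable and $d_{\Omega^c}$, and the Laplace comparison of Corollary~\ref{cor:laplace} (and with it steps 1--3 of Section~\ref{sec:rig}) goes through verbatim. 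I recommend replacing your final paragraph with this argument.
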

\begin{proof}
In the end of the proof of Theorem \ref{T:main4} we obtained $H\leq \frac{{\rm d}^-}{{\rm d}t} \big|_{t=0}\log h_\alpha\circ g(\alpha,t)$ for $\mathfrak q$-a.e.\ $\alpha\in \mathcal V_0$ with $H=\kkapp (N-1)$, so in particular for $\alpha\in \mathfrak Q(B_{\rm in})$. Then using the Riccati comparison and the maximum principle we can
follow verbatim the same proof as in Section \ref{sec:rig}.
\end{proof}

\subsection*{Acknowledgements}
 The authors are grateful to Yohei Sakurai for directing us to the work of Kasue,
and to the anonymous referees for very constructive comments.
AB is supported by the Dutch Research Council (NWO)~-- Project number VI.Veni.192.208.
CK is funded by the Deutsche Forschungsgemeinschaft (DFG) -- Projektnummer 396662902, ``Synthetische Kr\"ummungsschranken durch Methoden des optimal Transports''.
RM's research is supported in part by NSERC Discovery Grants RGPIN--2015--04383 and 2020--04162.
EW's research is supported in part by NSERC Discovery Grant RGPIN-2017-04896.

\pdfbookmark[1]{References}{ref}
\LastPageEnding

\end{document}